\documentclass[letterpaper]{article}

\usepackage[letterpaper]{geometry}
\usepackage{amsmath, amsthm, amsfonts, amsbsy, thmtools, amssymb, bm,textcmds,mathscinet}
\usepackage[sans]{dsfont}
\usepackage{hyperref}
\usepackage[mathscr]{euscript}
\usepackage{tikz}
\usetikzlibrary{arrows}
\usepackage{cleveref}
\usepackage{tikz}
\usepackage{comment}
\usepackage{enumitem}

\numberwithin{equation}{section}

\setcounter{tocdepth}{1}

\DeclareMathOperator{\Var}{Var}

\DeclareMathOperator{\Adm}{Adm}
\DeclareMathOperator{\dist}{dist}

\DeclareMathOperator{\diam}{diam}
\DeclareMathOperator{\TV}{TV}

\newcommand{\ii}{\mathbf i}
\newcommand{\eps}{\varepsilon}
\newcommand{\E}{\mathbb E}

\newtheorem{theorem}{Theorem}
\newtheorem{prop}[theorem]{Proposition}
\newtheorem{lem}[theorem]{Lemma}
\newtheorem{lemma}[theorem]{Lemma}

\theoremstyle{remark}
\newtheorem{remark}[theorem]{Remark}
\theoremstyle{definition}
\newtheorem{definition}[theorem]{Definition}

\newtheorem{example}[theorem]{Example}

\sloppy

\title{Gaussian Unitary Ensemble in random lozenge tilings}

\author{Amol Aggarwal \and Vadim Gorin}

\date{}

\begin{document}
	
\maketitle

\begin{abstract}
 This paper establishes a universality result for scaling limits of uniformly random lozenge tilings of large domains. We prove that whenever a boundary of the domain has three adjacent straight segments inclined under 120 degrees to each other, the asymptotics of tilings near the middle segment is described by the GUE--corners process of random matrix theory. An important step in our argument is to show that fluctuations of the height function of random tilings on essentially arbitrary simply-connected domains of diameter $N$ have smaller magnitude than $N^{1/2}$.
\end{abstract}

\tableofcontents

\section{Introduction}

Random lozenge tilings attracted substantial interest in the recent years following a 25 years old mathematical discovery that they exhibit highly non-trivial limit shapes with flat ``frozen'' facets; see  \cite{Vershik_Vienna,CLP,CerfKenyon,okounkov_reshetikhin_3d} for the pioneer results, \cite{Gorin_book} for an extensive review of the area, and Figures \ref{Figure_hexagon} and \ref{Figure_GUE_cases} for simulations.

One intriguing feature of random tilings is their link to random matrix theory. The definition of the random tilings model does not have any obvious matrices involved,
yet, the answers to the asymptotic questions about tilings turn out to involve distributions which are also encountered in the studies of eigenvalues of random matrices. In this paper we concentrate on the most direct of such connections and discuss how local limits near straight boundaries of tilings of general domains are described by the GUE--corners process --- a multilevel version of the celebrated Gaussian Unitary Ensemble.

\begin{definition} \label{Def_GUE_corners}
  Let $X=[X_{ij}]_{i,j=1}^{\infty}$ be an infinite matrix of i.i.d.\ complex Gaussian random variables of the form $N(0,1)+\ii N(0,1)$ and set $M=\frac{X+X^*}{2}$ to be its Hermitian part. Define ${\xi_1^k\le \xi_2^k\le \dots\le \xi_k^k}$ to be the eigenvalues of the principal $k\times k$ top-left corner of $M$ as in \eqref{eq_corners}. The array $\{\xi_i^k\}_{1\le i \le k}$ is called the \emph{GUE--corners process}.
\begin{equation}\label{eq_corners}\left(
\begin{array}{cccc}
M_{11} & \multicolumn{1}{|c}{M_{12}}& \multicolumn{1}{|c}{M_{13}} &\multicolumn{1}{|c}{M_{14}} \\
\cline{1-1} M_{21} & M_{22} & \multicolumn{1}{|c}{M_{23}}& \multicolumn{1}{|c}{M_{24}}\\
\cline{1-2} M_{31}& M_{32} & M_{33} &\multicolumn{1}{|c}{M_{34}}\\
\cline{1-3} M_{41}& M_{42} & M_{43} & M_{44}
\end{array}\right)
\end{equation}
\end{definition}

\begin{figure}[t]
\center
  \includegraphics[width=0.4\textwidth]{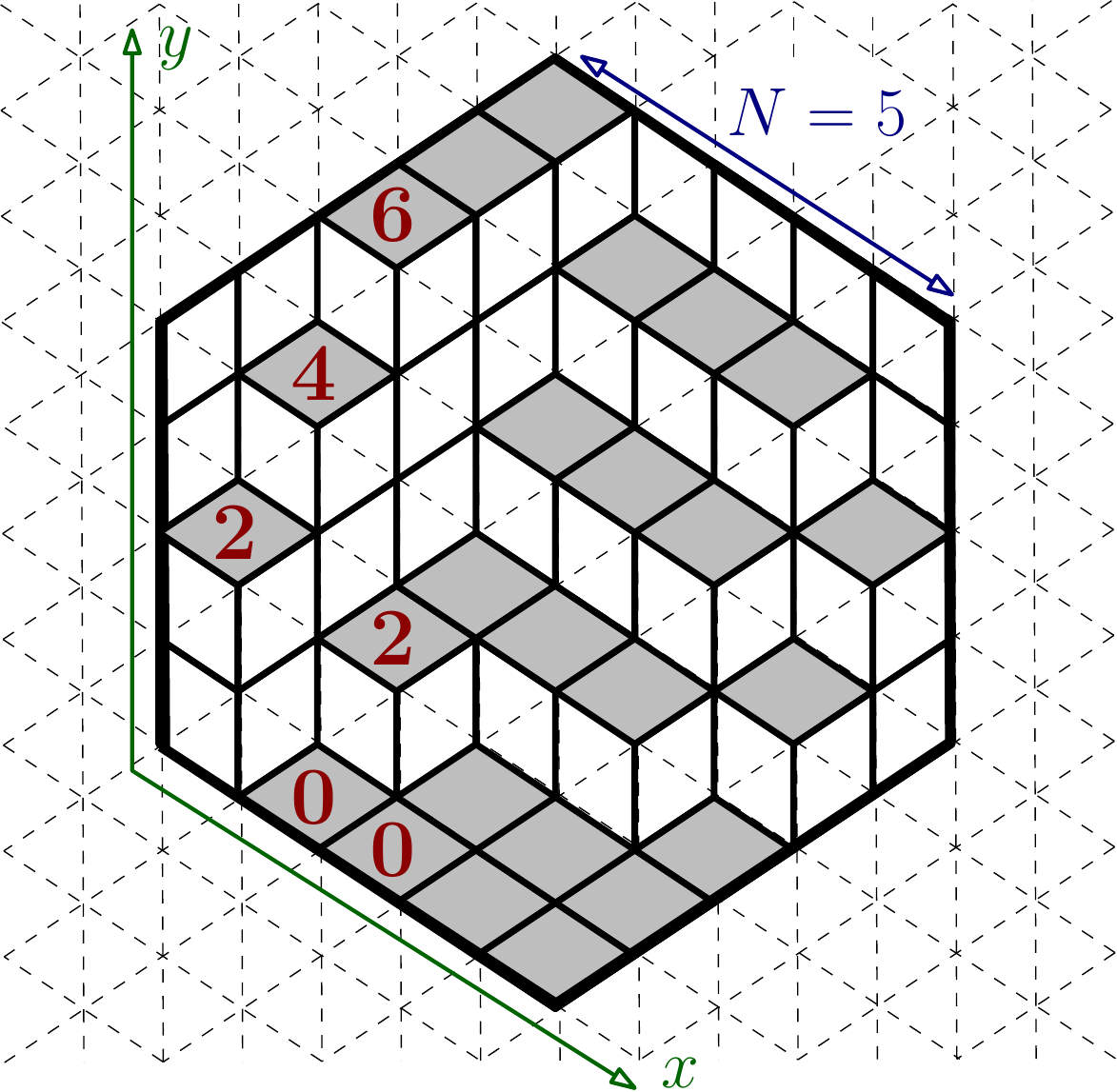} \qquad \qquad \qquad
  \includegraphics[width=0.4\textwidth]{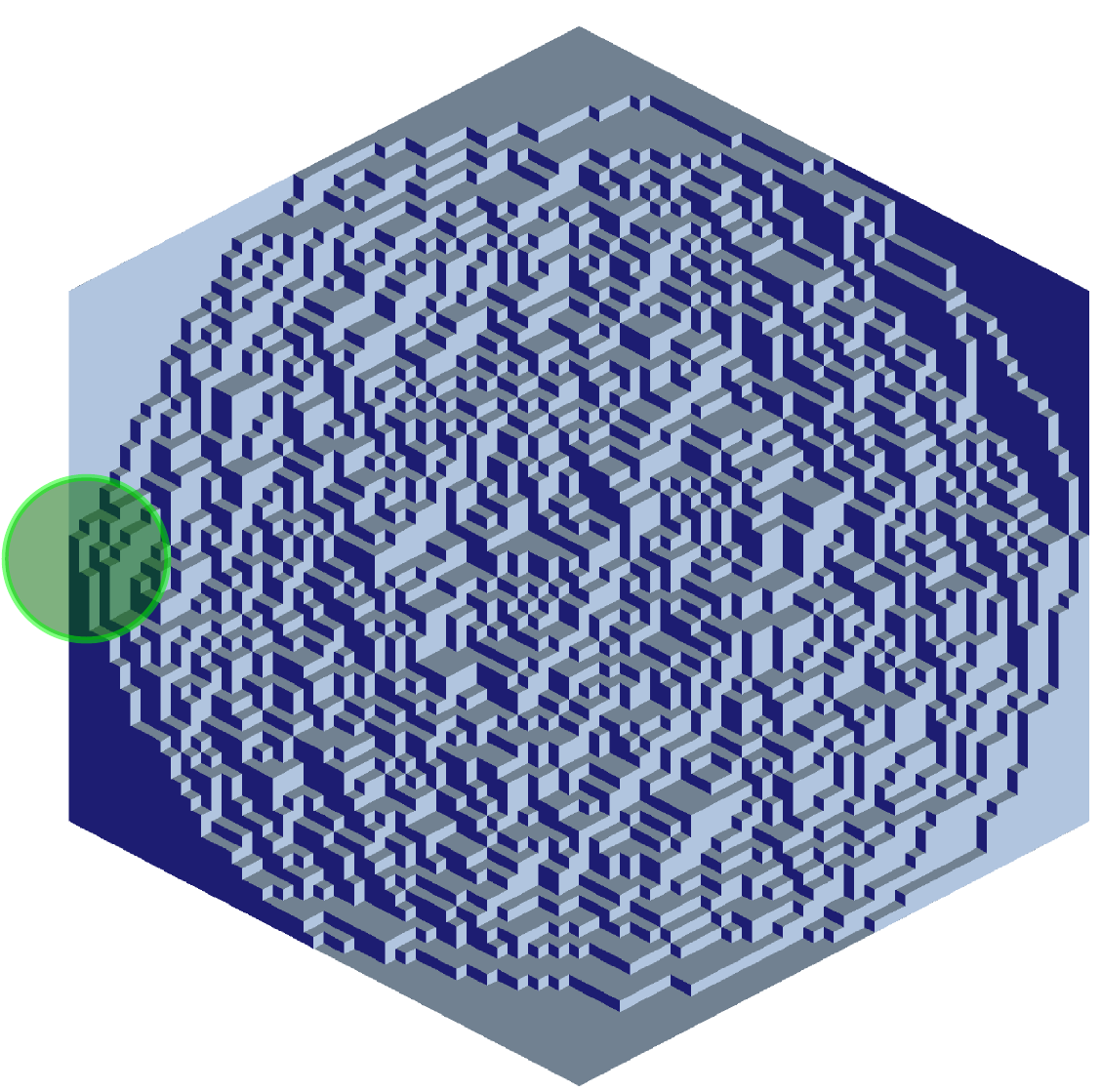}
\caption{Left panel: A lozenge tiling of a regular hexagon and coordinates of horizontal lozenges along the left boundary: $y_1^1=2$ on the first vertical, $(y_1^2, y_2^2)=(0,4)$ on the second vertical, $(y_1^3, y_2^3, y_3^3)=(0,2,6)$ on the third vertical. Right panel: A uniformly random tiling of the hexagon for $N=50$ and a point where we see the GUE--corners process. Three types of lozenges are shown in three different colors. }
\label{Figure_hexagon}
\end{figure}

Let us explain the connections between tilings and the GUE--corners process on the example of uniformly random tilings of a hexagon, where it was first proved in \cite{JN,Nordenstam}. Consider a regular hexagon of side length $N$ drawn on the triangular grid. We tile the hexagon with three types of lozenges (rhombuses) obtained by gluing two adjacent triangles on the grid; one possible tiling for $N=5$ is shown in the left panel of Figure \ref{Figure_hexagon}. For fixed $N$ there are finitely many such tilings and we are interested in a uniformly random lozenge tiling of the hexagon. A fascinating feature of random tilings is how their boundary conditions lead to inhomogeneous density profiles for three types of lozenges. The right panel of Figure \ref{Figure_hexagon} shows a simulation for a large hexagon, where this aspect is clearly visible. In particular, one sees that inside the inscribed circle all three types of lozenges are present, while outside it there are six regions with only one type of lozenges in each. The points where the circle is tangent to the sides of the hexagon serve as the most drastic exemplification of this phenomenon since, even locally, the density of tiles there is maximally inhomogeneous. Simultaneously, these are the only points through which the tilings non-trivially interact with boundaries.  Therefore, a question of interest is to mathematically understand these special points and to figure out the conditions of their appearance.

Let us distinguish one type of lozenge, say, we focus on the horizontal lozenges shown in gray in Figure \ref{Figure_hexagon} and trace their positions near the left vertical boundary of the hexagon. Due to combinatorics of the model, there will be one horizontal lozenge with coordinate $y_1^1$ on the first vertical line  (in our coordinate system shown in left panel of Figure \ref{Figure_hexagon}, a lozenge at the lower boundary of the hexagon has coordinate $0$). Further, there will be two horizontal lozenges with coordinates $y_1^2< y_2^2$ on the second vertical line, three horizontal lozenges with coordinates $y_1^3<y_2^3<y_3^3$ on the third line, etc. Since the tiling is random, $y_i^k$ are random variables. We are interested in their asymptotic behavior as $N\to \infty$. \cite{JN,Nordenstam} showed that the limit is governed by the object of Definition \ref{Def_GUE_corners}:
\begin{equation}
\label{eq_hex_to_GUE}
 \lim_{N\to\infty} \left( \frac{y_i^k- m_N}{\sqrt{N}}\right)_{1\le i \le k} = \bigl(\sigma \xi_i^k\bigr)_{1\le i \le k},\qquad  \text{ where }m_N=\tfrac{N}{2}\quad \text{ and }\quad \sigma=\sqrt{\tfrac{3}{8}},
\end{equation}
in the sense of convergence of finite-dimensional distributions (jointly over all $i$ and $k$ such that $1\le i \le k$). In particular, \eqref{eq_hex_to_GUE} implies that $y_1^1$ is asymptotically Gaussian as $N\to\infty$ and its variance grows proportionally to $\sqrt{N}$. In words, \eqref{eq_hex_to_GUE} says that discontinuity in densitites of lozenges near a boundary leads to the appearance of a random matrix object.

\bigskip

Beyond tilings of the hexagon, convergence to the GUE--corners process near the boundaries, generalizing \eqref{eq_hex_to_GUE}, was proven for several classes of domains; see \cite{OR_Birth,GP,Novak,Panova,Mk_Petrov}. The only necessary change is the expression for the constants $m_N$ and $\sigma$. Simultaneously, Okounkov and Reshetikhin \cite{OR_Birth} explained  that if the scaling limit of tilings near a boundary exists and satisfies certain very natural assumptions, then by invoking a classification theorem for Gibbs measures on interlacing arrays, one can conclude that the only possible candidate is the GUE--corners process. Hence, a universality prediction arose in \cite{JN,OR_Birth}: whenever a domain's boundary has a straight segment, the GUE--corners process should asymptotically arise near this segment.

Our main result, Theorem \ref{Theorem_GUE_simple}, proves this prediction. While a possible path to a proof might have been through checking that the assumptions of \cite{OR_Birth} always hold, we did not know how to make such a check and found another approach. In our proof, we rely on the results of \cite{GP, Novak} which show that the GUE--corners process always appears in tilings of a special class of domains called trapezoids. We further observe that locally, near a straight segment of the boundary, any planar domain looks like a trapezoid. However (in contrast to the setting of \cite{GP, Novak}) one boundary of the trapezoid is random and its fluctuations could a priori spoil the GUE--corners asymptotic. To resolve this issue, it must be shown that these fluctuations are $o(N^{1/2})$, that is, they have smaller scale than the GUE--corners limit --- while widely expected\footnote{See Lectures 11-12 in \cite{Gorin_book} for the heuristics explaining that the fluctuations should grow logarithmically in the size of the domain.}, such a bound for tilings of general domains has not been rigorously proven up to now.
Known concentration estimates on arbitrary domains only apply on scale $O(N^{1/2})$; proving the improved $o(N^{1/2})$ estimate occupies a substantial portion of the manuscript. This is done by first using a (known) interpretation for the height variance through the double-dimer model, and then by patching together multiple instances of bulk local limit results of \cite{ULS} for random lozenge tilings of arbitrary domains.

\bigskip

We expect that universal appearance of the GUE--corners process extends beyond lozenge tilings with the first candidates for probing the extensions given by the domino tilings and the six-vertex model, see \cite{JN,Gorin_6v,Dimitrov,DR} for partial results in this direction. An interesting and important project for a future work is to figure out whether our approach can be helpful beyond lozenges.

\subsection*{Acknowledgements}
The authors are grateful to David Keating and Ananth Sridhar for the simulation for Figure \ref{Figure_GUE_cases}. We also thank Alexander I. Bufetov for bringing the work \cite{CTFMD} to our attention.
The work of A.A.\ was partially supported by a Clay Research Fellowship. The work of V.G.\ was partially supported by NSF Grants DMS-1664619, DMS-1949820, by BSF grant 2018248, and by the Office of the Vice Chancellor for Research and Graduate Education at the University of Wisconsin--Madison with funding from the Wisconsin Alumni Research Foundation.

\subsection*{Notations}

 Throughout the text, we let $|u - v|$ denote the Euclidean distance between any two points $u, v \in \mathbb{R}^2$; we also let $\dist (\mathcal{S}, \mathcal{S}') = \inf_{s \in S} \inf_{s' \in S'} |s - s'|$ denote the distance between any sets $\mathcal{S}, \mathcal{S}' \subset \mathbb{R}^2$. We adopt a convention to write $C>1$ for constants which are large and $C>0$ for constants which are small. In general, these constants change from statement to statement.

\section{Setup and main results}

\label{Section_statement}

\subsection{Tilings and height functions}	
\label{Section_height_function}

Let us introduce the coordinate system on the triangular grid. We use $x$ and $y$ coordinate axes, which are inclined to each other by 120 degrees: $y$--coordinate grows up and $x$--coordinate grows in down-right direction, as in the left panel of Figure \ref{Figure_hexagon}. The \emph{triangular lattice} $\mathbb{T}$ in this coordinate system becomes a graph whose vertex set is $\mathbb{Z}^2$ and whose edge set consists of edges connecting $(x, y), (x', y') \in \mathbb{Z}^2$ if and only if $(x' - x, y' - y) \in \big\{ (1, 0), (-1, 0), (0, 1), (0, -1), (1, 1), (-1, -1) \big\}$. The faces of $\mathbb{T}$ are therefore triangles with vertices of the form $\big\{ (x, y), (x + 1, y), (x + 1, y + 1) \big\}$ or $\big\{ (x, y), (x, y + 1), (x + 1, y + 1) \big\}$. A \emph{domain} $R \subseteq \mathbb{T}$ is a simply-connected\footnote{Our methods should also extend beyond the simply-connected setting, but we do not address this here.} induced subgraph of $\mathbb{T}$. The \emph{boundary} $\partial R \subseteq R$ is the set of all vertices $v \in R$ adjacent to a vertex in $\mathbb{T} \setminus R$.
	

A pair of adjacent triangular faces of $\mathbb T$ forms a parallelogram, which we refer to as a \emph{lozenge} or \emph{tile}. Lozenges can be oriented in one of three ways; see the left panel of \Cref{Figure_height_function} for all three orientations. We refer to the topmost lozenge there (that is, one with vertices of the form $\big\{ (x, y), (x, y + 1), (x + 1, y + 2), (x + 1, y + 1) \big\}$) as a \emph{type $1$} lozenge. Similarly, we refer to the middle (with vertices of the form $\big\{ (x, y), (x + 1, y), (x + 2, y + 1), (x + 1, y + 1) \big\}$) and bottom (vertices of the form $\big\{ (x, y), (x, y + 1), (x + 1, y + 1), (x + 1, y) \big\}$) ones there as \emph{type $2$} and \emph{type $3$} lozenges, respectively. The horizontal lozenges discussed in the introduction are of type $2$. We deal with tilings of $R$ by lozenges of types $1$, $2$, and $3$. Let $\Omega (R)$ denote the set of all tilings of $R$; if $\Omega (R)$ is nonempty, we say that $R$ is \emph{tileable}.
	
	Associated with any tiling of $R$ is a \emph{height function}\footnote{There are other definitions of the height function; see, e.g., \cite[Section 1.4]{Gorin_book}  for a more symmetric one. All definitions contain the same information, because they differ from each other by explicit affine transformations.}  $H: R \rightarrow \mathbb{Z}$, namely, a function on the vertices of $R$ that satisfies
	\begin{flalign*}
		f(v) - f (u) \in \{ 0, 1 \}, \quad \text{whenever $u = (x, y)$ and $v \in \big\{ (x + 1, y), (x, y + 1), (x + 1, y + 1) \big\}$},
	\end{flalign*}
	
	\noindent for some $(x, y) \in \mathbb{Z}^2$. We refer to the restriction $h = H|_{\partial R}$ as a \emph{boundary height function}.
	
	For a fixed vertex $v \in R$ and integer $m \in \mathbb{Z}$, one can associate with any tiling of $R$ a height function $H: R \rightarrow \mathbb{Z}$ as follows. First, set $H (v) = m$, and then define $H$ at the remaining vertices of $R$ in such a way that the height functions along the four vertices of any lozenge in the tiling are of the form depicted on the left panel of \Cref{Figure_height_function}. In particular, we require that $H (x + 1, y) - H (x, y) = 1$ if and only if $(x, y)$ and $(x + 1, y)$ are vertices of the same type $1$ lozenge, and that $H (x, y + 1) - H (x, y) = 1$ if and only if $(x, y)$ and $(x, y + 1)$ are vertices of the same type $2$ lozenge. Since $R$ is simply-connected, a height function on $R$ is uniquely determined by these conditions and the value of $H(v) = m$.
	
	 We refer to the right panel of \Cref{Figure_height_function} for an example of a height function; there we chose $v$ to be the bottom--left corner of the hexagon and $m=0$. We can also view a lozenge tiling as a projection of a stepped surface in three-dimensional space: the three types of lozenges then become three types of faces of a unit cube, as shown in the middle panel of \Cref{Figure_height_function}. The height function then counts the (signed) distance between a given point in three-dimensional space and coordinate plane corresponding to type $3$ lozenges. In particular, the group of vertices with height $0$ around the left corner of the right panel of Figure \ref{Figure_height_function} lies in this plane.
 	
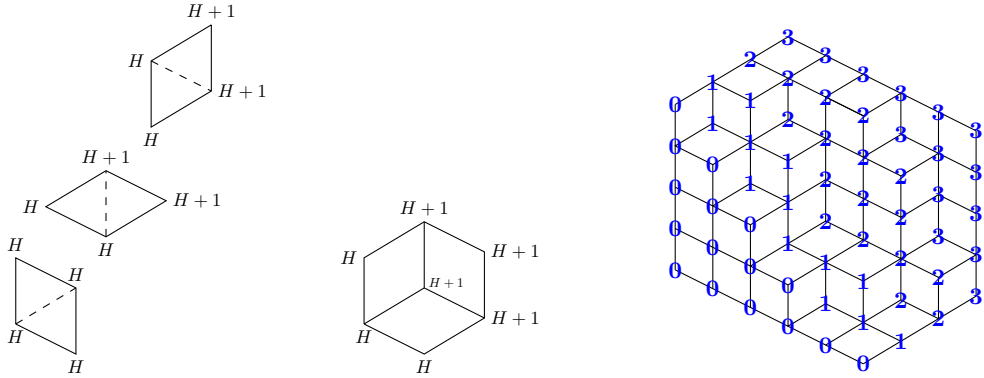
\begin{figure}
\begin{center}
\begin{tikzpicture}[
>=stealth,
auto,
style={
scale = .4,
cm={1,-0.5,0,1.1,(0,0)}
}
]
\draw[-, black] (17.5, 9) node[above, scale = .7]{$H + 1$}-- (17.5, 7) node[right, scale = .7]{$H + 1$} -- (15.5, 5) node[below, scale = .7]{$H$} -- (15.5, 7) node[left, scale = .7]{$H$} -- (17.5, 9);
\draw[-, dashed, black] (15.5, 7) -- (17.5, 7);

\draw[-, black] (14, 3) node[above, scale = .7]{$H + 1$} -- (16, 3) node[right, scale = .7]{$H + 1$} -- (14, 1) node[below, scale = .7]{$H$} -- (12, 1) node[left, scale = .7]{$H$}-- (14, 3);
\draw[-, dashed, black] (14, 1) -- (14, 3);

\draw[-, black] (11, -3) node[below, scale = .7]{$H$} -- (11, -1) node[above, scale = .7]{$H$} -- (13, -1) node[above, scale = .7]{$H$} -- (13, -3) node[below, scale = .7]{$H$} -- (11, -3);
\draw[-, dashed, black] (11, -3) -- (13, -1);

\end{tikzpicture}
\qquad
\begin{tikzpicture}[
>=stealth,
auto,
style={
scale = .4,
cm={1,-0.5,0,1.1,(0,0)}
}
]

\draw[-, black] (-9, 1.5) node[below, scale = .7]{$H$} -- (-7, 1.5) node[below, scale = .7]{$H$} -- (-5, 3.5) node[right, scale = .7]{$H + 1$} -- (-5, 5.5) node[right, scale = .7]{$H + 1$} -- (-7, 5.5) node[above, scale = .7]{$H + 1$} -- (-9, 3.5) node[left, scale = .7]{$H$} -- (-9, 1.5);
\draw[-, black] (-9, 1.5) -- (-7, 3.5) node[above=2,right, scale = .5]{$H+1$} -- (-7, 5.5);
\draw[-, black] (-7, 3.5) -- (-5, 3.5);
\end{tikzpicture}
\qquad\qquad
\begin{tikzpicture}[
>=stealth,
auto,
style={
scale = .5,
cm={1,-0.5,0,1.1,(0,0)}
}
]
\draw[-, black] (0, 0) -- (5, 0);
\draw[-, black] (0, 0) -- (0, 4);
\draw[-, black] (5, 0) -- (8, 3);
\draw[-, black] (0, 4) -- (3, 7);
\draw[-, black] (8, 3) -- (8, 7);
\draw[-, black] (8, 7) -- (3, 7);

\draw[-, black] (1, 0) -- (1, 3) -- (0, 3) -- (1, 4) -- (1, 5) -- (2, 5) -- (4, 7);
\draw[-, black] (0, 2) -- (2, 2) -- (2, 0);
\draw[-, black] (0, 1) -- (3, 1) -- (3, 0) -- (4, 1) -- (6, 1);
\draw[-, black] (2, 6) -- (5, 6) -- (6, 7) -- (6, 6) -- (8, 6);
\draw[-, black] (4, 6) -- (5, 7);
\draw[-, black] (1, 4) -- (2, 4) -- (2, 5);
\draw[-, black] (2, 4) -- (3, 5) -- (3, 6);
\draw[-, black] (1, 3) -- (2, 4);
\draw[-, black] (2, 2) -- (3, 3) -- (3, 4) -- (2, 4) -- (2, 3) -- (1, 2);
\draw[-, black] (2, 3) -- (3, 3) -- (3, 2) -- (4, 3) -- (4, 6) -- (5, 6) -- (5, 3) -- (4, 3);
\draw[-, black] (3, 5) -- (5, 5) -- (6, 6);
\draw[-, black] (7, 7) -- (7, 4) -- (8, 4) -- (7, 3) -- (7, 2) -- (6, 2) -- (4, 0);
\draw[-, black] (8, 5) -- (7, 5) -- (6, 4) -- (6, 5) -- (7, 6);
\draw[-, black] (3, 3) -- (4, 4) -- (6, 4);
\draw[-, black] (3, 4) -- (4, 5);
\draw[-, black] (6, 5) -- (5, 5);
\draw[-, black] (7, 4) -- (6, 3) -- (7, 3);
\draw[-, black] (2, 1) -- (3, 2) -- (5, 2) -- (6, 3);
\draw[-, black] (3, 1) -- (5, 3) -- (6, 3) -- (6, 4);
\draw[-, black] (4, 1) -- (4, 2);
\draw[-, black] (5, 1) -- (5, 2);
\draw[-, black] (6, 2) -- (6, 3);

\draw[-,blue] (0, 0) circle [radius = 0] node[scale = .9]{$\mathbf{0}$};
\draw[-,blue] (1, 0) circle [radius = 0] node[scale = .9]{$\mathbf{0}$};
\draw[-,blue] (2, 0) circle [radius = 0] node[scale = .9]{$\mathbf{0}$};
\draw[-,blue] (3, 0) circle [radius = 0] node[scale = .9]{$\mathbf{0}$};
\draw[-,blue] (4, 0) circle [radius = 0] node[scale = .9]{$\mathbf{0}$};
\draw[-,blue] (5, 0) circle [radius = 0] node[scale = .9]{$\mathbf{0}$};

\draw[-,blue] (0, 1) circle [radius = 0] node[scale = .9]{$\mathbf{0}$};
\draw[-,blue] (1, 1) circle [radius = 0] node[scale = .9]{$\mathbf{0}$};
\draw[-,blue] (2, 1) circle [radius = 0] node[scale = .9]{$\mathbf{0}$};
\draw[-,blue] (3, 1) circle [radius = 0] node[scale = .9]{$\mathbf{0}$};
\draw[-,blue] (4, 1) circle [radius = 0] node[scale = .9]{$\mathbf{1}$};
\draw[-,blue] (5, 1) circle [radius = 0] node[scale = .9]{$\mathbf{1}$};
\draw[-,blue] (6, 1) circle [radius = 0] node[scale = .9]{$\mathbf{1}$};

\draw[-,blue] (0, 2) circle [radius = 0] node[scale = .9]{$\mathbf{0}$};
\draw[-,blue] (1, 2) circle [radius = 0] node[scale = .9]{$\mathbf{0}$};
\draw[-,blue] (2, 2) circle [radius = 0] node[scale = .9]{$\mathbf{0}$};
\draw[-,blue] (3, 2) circle [radius = 0] node[scale = .9]{$\mathbf{1}$};
\draw[-,blue] (4, 2) circle [radius = 0] node[scale = .9]{$\mathbf{1}$};
\draw[-,blue] (5, 2) circle [radius = 0] node[scale = .9]{$\mathbf{1}$};
\draw[-,blue] (6, 2) circle [radius = 0] node[scale = .9]{$\mathbf{2}$};
\draw[-,blue] (7, 2) circle [radius = 0] node[scale = .9]{$\mathbf{2}$};

\draw[-,blue] (0, 3) circle [radius = 0] node[scale = .9]{$\mathbf{0}$};
\draw[-,blue] (1, 3) circle [radius = 0] node[scale = .9]{$\mathbf{0}$};
\draw[-,blue] (2, 3) circle [radius = 0] node[scale = .9]{$\mathbf{1}$};
\draw[-,blue] (3, 3) circle [radius = 0] node[scale = .9]{$\mathbf{1}$};
\draw[-,blue] (4, 3) circle [radius = 0] node[scale = .9]{$\mathbf{2}$};
\draw[-,blue] (5, 3) circle [radius = 0] node[scale = .9]{$\mathbf{2}$};
\draw[-,blue] (6, 3) circle [radius = 0] node[scale = .9]{$\mathbf{2}$};
\draw[-,blue] (7, 3) circle [radius = 0] node[scale = .9]{$\mathbf{2}$};
\draw[-,blue] (8, 3) circle [radius = 0] node[scale = .9]{$\mathbf{3}$};

\draw[-,blue] (0, 4) circle [radius = 0] node[scale = .9]{$\mathbf{0}$};
\draw[-,blue] (1, 4) circle [radius = 0] node[scale = .9]{$\mathbf{1}$};
\draw[-,blue] (2, 4) circle [radius = 0] node[scale = .9]{$\mathbf{1}$};
\draw[-,blue] (3, 4) circle [radius = 0] node[scale = .9]{$\mathbf{1}$};
\draw[-,blue] (4, 4) circle [radius = 0] node[scale = .9]{$\mathbf{2}$};
\draw[-,blue] (5, 4) circle [radius = 0] node[scale = .9]{$\mathbf{2}$};
\draw[-,blue] (6, 4) circle [radius = 0] node[scale = .9]{$\mathbf{2}$};
\draw[-,blue] (7, 4) circle [radius = 0] node[scale = .9]{$\mathbf{3}$};
\draw[-,blue] (8, 4) circle [radius = 0] node[scale = .9]{$\mathbf{3}$};

\draw[-,blue] (1, 5) circle [radius = 0] node[scale = .9]{$\mathbf{1}$};
\draw[-,blue] (2, 5) circle [radius = 0] node[scale = .9]{$\mathbf{1}$};
\draw[-,blue] (3, 5) circle [radius = 0] node[scale = .9]{$\mathbf{2}$};
\draw[-,blue] (4, 5) circle [radius = 0] node[scale = .9]{$\mathbf{2}$};
\draw[-,blue] (5, 5) circle [radius = 0] node[scale = .9]{$\mathbf{2}$};
\draw[-,blue] (6, 5) circle [radius = 0] node[scale = .9]{$\mathbf{2}$};
\draw[-,blue] (7, 5) circle [radius = 0] node[scale = .9]{$\mathbf{3}$};
\draw[-,blue] (8, 5) circle [radius = 0] node[scale = .9]{$\mathbf{3}$};

\draw[-,blue] (2, 6) circle [radius = 0] node[scale = .9]{$\mathbf{2}$};
\draw[-,blue] (3, 6) circle [radius = 0] node[scale = .9]{$\mathbf{2}$};
\draw[-,blue] (4, 6) circle [radius = 0] node[scale = .9]{$\mathbf{2}$};
\draw[-,blue] (5, 6) circle [radius = 0] node[scale = .9]{$\mathbf{2}$};
\draw[-,blue] (6, 6) circle [radius = 0] node[scale = .9]{$\mathbf{3}$};
\draw[-,blue] (7, 6) circle [radius = 0] node[scale = .9]{$\mathbf{3}$};
\draw[-,blue] (8, 6) circle [radius = 0] node[scale = .9]{$\mathbf{3}$};

\draw[-,blue] (3, 7) circle [radius = 0] node[scale = .9]{$\mathbf{3}$};
\draw[-,blue] (4, 7) circle [radius = 0] node[scale = .9]{$\mathbf{3}$};
\draw[-,blue] (5, 7) circle [radius = 0] node[scale = .9]{$\mathbf{3}$};
\draw[-,blue] (6, 7) circle [radius = 0] node[scale = .9]{$\mathbf{3}$};
\draw[-,blue] (7, 7) circle [radius = 0] node[scale = .9]{$\mathbf{3}$};
\draw[-,blue] (8, 7) circle [radius = 0] node[scale = .9]{$\mathbf{3}$};
\end{tikzpicture}

\end{center}

\caption{\label{Figure_height_function} Left panel: three types of lozenges and corresponding changes of the height function. Middle panel: a tiled unit hexagon can be viewed as a projection of three faces of a unit cube in 3D. Right panel: a lozenge tiling and corresponding values of the height function.}
\end{figure}

One important observation is that, if there exists a tiling $\mathscr{M}$ of $R$ associated with some height function $H$, then the boundary height function $h = H |_{\partial R}$ is independent of $\mathscr{M}$ and is uniquely determined by $R$ (except for a global shift, which was above fixed by the value of $H(v) = m$).

	It will be useful to introduce continuum analogs of the above notions. So, set ${\mathcal{T} = \bigl\{ (s, t): 0 < s + t < 1 \bigr\} \subset \mathbb{R}^2}$ and its closure $\overline{\mathcal{T}} = \big\{ (s, t) : 0 \le s + t \le 1 \big\}$. We interpret $\overline{\mathcal{T}}$ as the set of possible gradients, also called \emph{slopes}, for a continuum height function; $\mathcal{T}$ is then the set of \emph{liquid} slopes, whose associated tilings still ``appear random.'' For any simply-connected domain $\mathfrak{R} \subset \mathbb{R}^2$, we say that a function $\mathfrak{H} : \mathfrak{R} \rightarrow \mathbb{R}$ is \emph{admissible} if $\mathfrak{H}$ is $1$-Lipschitz and $\nabla \mathfrak{H} (z) \in \overline{\mathcal{T}}$ for almost all $z \in \mathfrak{R}$. We further say a function $\mathfrak{h}: \partial \mathfrak{R} \rightarrow \mathbb{R}$ \emph{admits an admissible extension to $\mathfrak{R}$} if $\Adm (\mathfrak{R}; \mathfrak{h})$, the set of admissible functions $\mathfrak{H}: \mathfrak{R} \rightarrow \mathbb{R}$ with $\mathfrak{H} |_{\partial \mathfrak{R}} = \mathfrak{h}$, is not empty.

\subsection{Approximation of domains}

Let us now explain the sense in which a sequence of discrete domains ``converges'' to a continuum one. To that end, for any subset $\mathcal{S} \subseteq \mathbb{R}^2$ and points $x_1, x_2 \in \mathcal{S}$, let $d_{\mathcal{S}} (x_1, x_2) = \inf_{\gamma} |\gamma|$, where $\gamma \subseteq \mathcal{S}$ is taken over all paths in $\mathcal{S}$ connecting $x_1$ and $x_2$, and $|\gamma|$ denotes the length of $\gamma$. Next, we say that a sequence of subsets \emph{$\mathfrak{R}_1, \mathfrak{R}_2, \ldots \subset \mathbb{R}^2$ converges to $\mathfrak{R} \subset \mathbb{R}^2$}, and write $\lim_{N \rightarrow \infty} \mathfrak{R}_N = \mathfrak{R}$, if for any $\delta > 0$ there exists an integer $N_0 = N_0 (\delta) > 1$ such that the following two properties hold whenever $N > N_0$. First, for any $x \in \mathfrak{R}$, there exist $x_N \in \mathfrak{R}_N$ and $x' \in \mathfrak{R} \cap \mathfrak{R}_N$ such that
	\begin{flalign}
		\label{dxxnrrn}
		\displaystyle\max \big\{ d_{\mathfrak{R}} (x, x'), d_{\mathfrak{R}_N} (x_N, x') \big\} < \delta.
	\end{flalign}
	
	\noindent Second, for any $x_N \in \mathfrak{R}_N$, there exist $x \in \mathfrak{R}$ and $x' \in \mathfrak{R} \cap \mathfrak{R}_N$ such that \eqref{dxxnrrn} again holds.
	
	In this case, we moreover say that a sequence of functions \emph{$\mathfrak{h}_N: \partial \mathfrak{R}_N \rightarrow \mathbb{R}$ converges to $\mathfrak{h}: \partial \mathfrak{R} \rightarrow \mathbb{R}$}, and write $\lim_{N \rightarrow \infty} \mathfrak{h}_N = \mathfrak{h}$, if for every real number $\delta > 0$ there exists an integer $N_0 = N_0 (\delta) > 1$ such that the following two properties hold whenever $N > N_0$. First, for each $x \in \partial \mathfrak{R}$, there exist $x_N \in \partial \mathfrak{R}_N$ and $x' \in  \mathfrak{R} \cap \mathfrak{R}_N$ such that \eqref{dxxnrrn} and $\big| \mathfrak{h}_N (x_N) - \mathfrak{h} (x) \big| < \delta$ both hold. Second, for each $x_N \in \partial \mathfrak{R}_N$, there exist $x \in \partial \mathfrak{R}$ and $x' \in \mathfrak{R} \cap \mathfrak{R}_N$ such that the same inequalities are satisfied.

	
\begin{definition}\label{Def_domain_convergence} Suppose we are given a sequence $R_1, R_2, \ldots \subset \mathbb{T}$ of domains and boundary height functions $h_N : \partial R_N \rightarrow \mathbb{Z}$, $N=1,2,\dots$. Denote for each integer $N \ge 1$ the domain $\mathfrak{R}_N = \tfrac{1}{N} R_N$ and function $\mathfrak{h}_N : \partial \mathfrak{R}_N \rightarrow \mathbb{R}$ by $\mathfrak{h}_N (x) = \tfrac{1}{N} h_N (\tfrac{1}{N} x)$.
 We say that $(R_N; h_N)$ converges to $(\mathfrak{R}; \mathfrak{h})$, for some $\mathfrak{R} \subset \mathbb{R}^2$ and $\mathfrak{h} : \partial \mathfrak{R} \rightarrow \mathbb{R}$, if $\lim_{N \rightarrow \infty} \mathfrak{R}_N = \mathfrak{R}$ and $\lim_{N \rightarrow \infty} \mathfrak{h}_N = \mathfrak{h}$.
\end{definition}

\begin{example} \label{Example_domains}
 Let $\mathfrak R$ be a tileable polygon drawn on $\mathbb T$. Taking any tiling of $\mathfrak R$ and using definitions of Section \ref{Section_height_function}, we get the corresponding boundary height function $\mathfrak h: \partial \mathfrak{R} \rightarrow \mathbb{Z}$. For $N=1,2,\dots$, let $R_N=N\cdot R$. It is straightforward to check that $R_N$ is tileable; let $h_N$ be the corresponding boundary height function. In this situation $(R_N; h_N)$ converges to $(\mathfrak{R}; \mathfrak{h})$, because we can choose $x=x'=x_N$ in \eqref{dxxnrrn}.

 Another point of view on this example is that a polygon $\mathfrak R$ is fixed and we are tiling it with lozenges of smaller and smaller side lengths $1/N$.
\end{example}

\subsection{Convergence to the GUE--corners: statements}

We are now ready to state our main theorem about convergence of random tilings to the GUE--corners process. We first explain a simpler version of the theorem, and then proceed to the most general setting.

Throughout this section we fix a sequence of tileable domains $R_N\subset \mathbb{T}$ with corresponding boundary height functions $h_N$, $N=1,2,\dots$ and assume that they converge to $(\mathfrak R;\mathfrak h)$. We are going to choose a specific straight segment in the boundary of each $R_N$ and assume that there are two adjacent straight segments inclined to it under an angle of 120 degrees. There are six options depending on the orientation of the straight segment, as shown in Figure \ref{Figure_six}. In each of these six situations we trace a particular type of lozenges (whose diagonal is parallel to the straight segment of interest) and get an interlacing configuration of lozenges, in the same way as in Figure \ref{Figure_hexagon} we were tracing horizontal lozenges near a vertical segment of the boundary. We investigate the coordinates of these lozenges counted in the direction parallel to the straight segment of interest; in this way in Figure \ref{Figure_hexagon} the segment was vertical and, hence, we were studying the vertical coordinates $y_i^k$. For notational simplicity, we state the next theorem for the vertical boundaries, as in the top-left panel of Figure \ref{Figure_six}; applying rotations, exactly the same statement holds for other five orientations.

\begin{figure}[t]
\center
\includegraphics[width=0.7\textwidth]{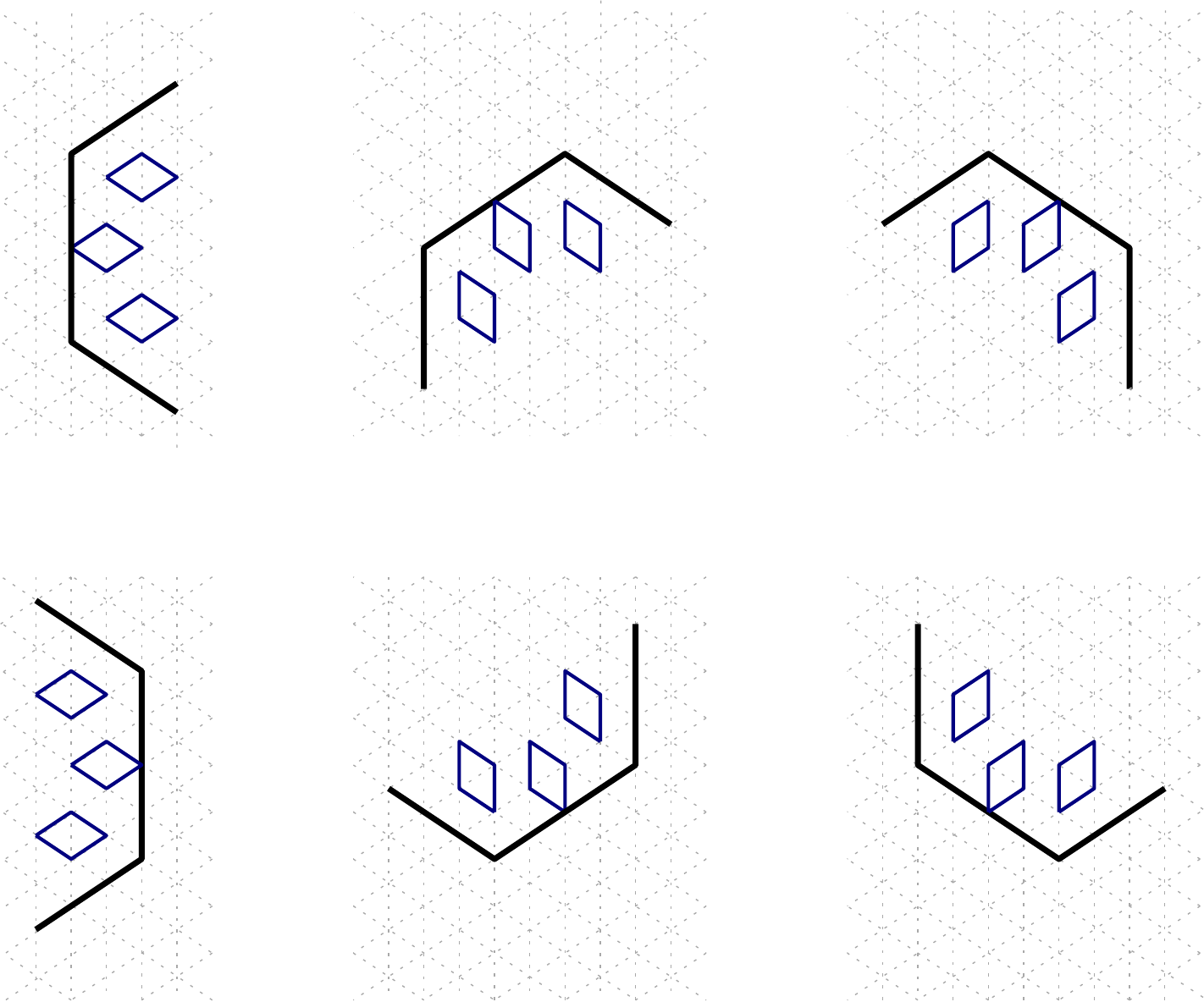}
\caption{Six possible orientations of a straight segment and corresponding types of lozenges that we trace.}
\label{Figure_six}
\end{figure}

\begin{theorem} \label{Theorem_GUE_simple}
 Take a sequence of tileable simply-connected domains $R_N\subset \mathbb R$ and corresponding boundary height functions $h_N$. Suppose that $(R_N,h_N)$ converges to some $(\mathfrak R, \mathfrak h)$, where  $\mathfrak{R} \subset \mathbb{R}^2$  is a simply-connected domain with piecewise smooth boundary. Further, suppose that the boundary of each $R_N$ has a distinguished vertical segment $I_N$ with two adjacent segments $I_N^{(l)}$, $I_N^{(r)}$ inclined by 120 degrees to $I_N$, as in the top-left panel of Figure \ref{Figure_six}. Assume that as $N\to\infty$, rescaled by $N$, the positions and lengths of $I_N$, $I_N^{(l)}$, and $I_N^{(r)}$ converge to positions and lengths (which must remain positive) of straight segments of the boundary of $\mathfrak R$. Let $\{y_i^k(N)\}_{1\le i \le k}$ denote the interlacing array of the vertical coordinates of horizontal lozenges adjacent to $I_N$ in uniformly random tiling of $R_N$. Then there exist constants $m_N\in\mathbb R$ and $\sigma\ge 0$, such that
 \begin{equation}
 \lim_{N\to\infty} \left( \frac{y_i^k(N)- m_N}{\sqrt{N}}\right)_{1\le i \le k} = \bigl(\sigma \xi_i^k\bigr)_{1\le i \le k},
\end{equation}
in the sense of convergence of finite-dimensional distributions (jointly over all $i$ and $k$ such that $1\le i \le k$), and where $\{\xi_i^k\}$ is the GUE--corners process of Definition \ref{Def_GUE_corners}.
\end{theorem}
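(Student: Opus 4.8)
The plan is to localize the analysis near the middle of $I_N$ to a trapezoidal subdomain, invoke the appearance of the GUE--corners process for tilings of trapezoids established in \cite{GP, Novak}, and control the effect of the one feature absent from \cite{GP, Novak}: in our setting one side of the trapezoid lies in the interior of $R_N$ and hence carries \emph{random} boundary data. The decisive point is that this random data sits, in the uniform norm, within $o(N^{1/2})$ of a deterministic profile; since the GUE--corners limit lives on scale $N^{1/2}$, this renders the far side effectively deterministic and puts us in the scope of \cite{GP, Novak}.

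\emph{Step 1: reduction to a trapezoid.} Fix small $\eps>0$, below the limiting lengths of $I_N^{(l)}$ and $I_N^{(r)}$, and let $T_N\subseteq R_N$ be the subdomain bounded by $I_N$, by the sub--segments of $I_N^{(l)}$ and $I_N^{(r)}$ of length $\Theta(\eps N)$ issuing from the endpoints of $I_N$, and by a cut $\gamma_N$ parallel to $I_N$ at distance $\Theta(\eps N)$ from it; for small $\eps$ this $T_N$ is an honest trapezoid contained in $R_N$. By the domain Markov (Gibbs) property of uniform tilings, conditionally on the restriction of the tiling to $R_N\setminus T_N$ the tiling of $T_N$ is uniformly random with boundary height function $H|_{\partial T_N}$; hence the law of $\{y_i^k(N)\}_{1\le i\le k}$ is a mixture over the random function $H|_{\partial T_N}$. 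On $I_N$ and on the sub--segments of $I_N^{(l)},I_N^{(r)}$ this function is deterministic and, being the restriction of a boundary height function to straight segments meeting at $120^\circ$, equals (up to $O(1)$) the boundary data of an idealized trapezoid, with the $120^\circ$ angles forcing exactly the characteristic trapezoid slopes on the three segments; all randomness therefore enters only through $H|_{\gamma_N}$.

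\emph{Step 2: the height fluctuation bound.} Let $\mathfrak H\in\Adm(\mathfrak R;\mathfrak h)$ be the limit shape of $(R_N;h_N)$ (its existence and uniqueness follow from the variational principle under the stated hypotheses). By the law of large numbers for tilings of general domains, $\E H(v)=N\mathfrak H(v/N)+o(N)$ for $v\in\gamma_N$; the new input is that, with probability tending to $1$,
\begin{equation*}
 \max_{v\in\gamma_N}\bigl|H(v)-\E H(v)\bigr|=o(N^{1/2}).
\end{equation*}
Since $H$ is $1$--Lipschitz, a union bound over an $o(N^{1/2})$--net of $\gamma_N$ reduces this to the pointwise estimate $\Var H(v)=o(N)$, uniformly over $v$ in the bulk of $R_N$. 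To obtain it I would use the classical expression of $\Var\bigl(H(v)-H(u)\bigr)$ through the double--dimer model, where it is governed by the number of double--dimer loops separating $u$ from $v$, and bound that number by partitioning $R_N$ into mesoscopic cells and applying, cell by cell, the bulk local limit theorem of \cite{ULS}: inside each cell the tiling is asymptotically a translation--invariant ergodic Gibbs measure on $\mathbb Z^2$, whose height field has only logarithmic fluctuations; summing these contributions over dyadic scales promotes the local logarithmic bounds to $\Var H(v)=o(N)$. This is the step I expect to be the main obstacle — improving the $O(N^{1/2})$ concentration available from standard arguments to a genuine $o(N^{1/2})$ bound on arbitrary simply--connected domains.

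\emph{Step 3: conclusion.} On the event $\mathcal E_N$ of Step 2, $H|_{\gamma_N}$ lies within $o(N^{1/2})$ of $\E H|_{\gamma_N}$ in the uniform norm; choose deterministic boundary height functions $h_N^{\pm}$ on $\gamma_N$, realizable as restrictions of tiling height functions, with $h_N^-\le H|_{\gamma_N}\le h_N^+$ on $\mathcal E_N$, with $h_N^+-h_N^-=o(N^{1/2})$, and with $N^{-1}h_N^{\pm}$ converging to $\mathfrak H$ restricted to the limiting cut. By monotonicity of uniform tilings in the boundary height function (FKG), for each admissible value $g$ of $H|_{\gamma_N}$ in the allowed range the tiling of $T_N$ with data $g$ on $\gamma_N$ can be coupled with the uniform tilings of $T_N$ carrying data $h_N^{\mp}$ there so that the interlacing arrays satisfy $y_i^{-,k}\le y_i^k\le y_i^{+,k}$ coordinatewise; moreover the bracketing arrays differ by $o(N^{1/2})$ with high probability, since $h_N^+-h_N^-=o(N^{1/2})$. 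Applying \cite{GP, Novak} to the two deterministic trapezoids — whose rescaled boundary data share the macroscopic limit — gives $\bigl(\tfrac{y_i^{\pm,k}-m_N}{\sqrt N}\bigr)\to(\sigma\xi_i^k)$ in the sense of finite--dimensional distributions, for a common $m_N\in\mathbb R$ (the rescaled location on $I_N$ where the arctic curve of $\mathfrak H$ meets $I_N$, or any point of $I_N$ with $\sigma=0$ if no such tangency occurs) and $\sigma\ge0$ read off from the local slope of $\mathfrak H$ there. Sandwiching, together with $y_i^{+,k}-y_i^{-,k}=o(N^{1/2})$, then forces $\bigl(\tfrac{y_i^k(N)-m_N}{\sqrt N}\bigr)\to(\sigma\xi_i^k)$ conditionally on $\mathcal E_N$, and since $\mathbb P[\mathcal E_N]\to1$ the convergence holds unconditionally, which is the assertion of the theorem.
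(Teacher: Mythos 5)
Your overall strategy matches the paper's: localize near $I_N$ to a trapezoid, apply the fixed-trapezoid GUE--corners result of \cite{GP,Novak}, and control the randomness of the far side of the trapezoid via an $o(N^{1/2})$ height-concentration estimate (which the paper establishes as Proposition~\ref{Proposition_varianceh}, by exactly the double-dimer/local-universality route you sketch). Your Step~3, however, takes a different path than the paper and it is here that a genuine gap appears.

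The paper does \emph{not} prove uniform concentration $\sup_{v\in\gamma_N}\bigl|H(v)-\E H(v)\bigr|=o(N^{1/2})$. It only proves the pointwise estimate $\Var H_N(v)=o(N)$, and then applies Lemma~\ref{Lemma_GUE_trap} directly to the random dents $\lambda$, reducing the theorem to showing that the \emph{averages} $m(\lambda)$ and $\sigma(\lambda)^2$ of \eqref{eq_params_trap} concentrate (its Claims~I and~II). Since $m(\lambda)$ is a normalized sum of $H$ along $\gamma_N$, see~\eqref{eq_m_transform}, a Cauchy--Schwarz bound on the variance of that sum converts the pointwise $o(N)$ variance into $\Var m(\lambda)=o(N)$, which is all that is needed. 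Your sandwich $h_N^-\le H|_{\gamma_N}\le h_N^+$ with $h_N^+-h_N^-=o(N^{1/2})$ requires the genuinely stronger sup-norm statement. You propose to obtain it from the pointwise estimate ``by a union bound over an $o(N^{1/2})$--net of $\gamma_N$,'' but this does not go through: Chebyshev from $\Var H(v)<\eps N$ gives only a constant-order failure probability per net point, the net contains $\asymp N^{1/2}$ points, and the $1$-Lipschitz property merely lets you pass from the net to all of $\gamma_N$ — it does not defeat the polynomial-in-$N$ blow-up of the union bound. To make your Step~3 rigorous you would need exponential (or at least polynomially decaying in the deviation) concentration of $H(v)$, which is not what Step~2 provides as stated. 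Either upgrade the concentration estimate (the paper's distance-graph representation actually encodes more than the variance: conditionally on the level-set decomposition, $F(v)$ is a sum of independent $\pm1$ steps, so a Hoeffding bound is available and could rescue your route), or abandon the pathwise sandwich and argue, as the paper does, at the level of the averaged quantities $m(\lambda),\sigma(\lambda)$.

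Two smaller remarks. First, your description of Step~2 covers only the liquid region (``partition into mesoscopic cells, apply \cite{ULS}''); the paper's proof of Proposition~\ref{Proposition_varianceh} also needs to traverse regions of frozen or near-frozen slope, and there it invokes the structure theorem of De Silva--Savin (Lemma~\ref{nablaf}, cited as \cite{MCFRS}) to build a path to $\partial\mathfrak R$ along which the slope is extremal — that portion of the argument is missing from your sketch. Second, the monotone coupling reverses the inequality you wrote: increasing the boundary height function increases $H$ pointwise, which pushes the jump locations $y_i^k$ \emph{down}, so the correct sandwich is $y_i^{+,k}\le y_i^k\le y_i^{-,k}$; this is cosmetic and does not affect the sandwiching logic, but it is worth getting right.
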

\begin{remark}
 There are two ways to think about the constants $m_N$ and $\sigma$. First, we can set ${m_N=\E y_1^1(N)}$ and $\sigma=\lim_{N\to\infty} N^{-1/2} \sqrt{\Var(y_1^1(N))}$, i.e. they are the mean and asymptotic standard deviation of $y_1^1$. Second, $\sigma$ and the leading order\footnote{Subleading terms, i.e.\ $m_N-N\cdot \left[\lim_{N\to\infty}\tfrac{m_N}{N}\right]$ might depend on the exact way $(R_N,h_N)$ approximates $(\mathfrak R,\mathfrak h)$, and there is no way to reconstruct them only from  $(\mathfrak R,\mathfrak h)$ and the corresponding tiling limit shape.}
  behavior of $m_N$ can be reconstructed through the limit shape (Law of Large Numbers) for tilings, see Lemma \ref{Lemma_GUE_trap} below for the formulas. We also mention that there might be degenerate situations, such as frozen domains, in which $\sigma=0$.
\end{remark}

\begin{figure}[t]
\center
\includegraphics[width=0.45\textwidth]{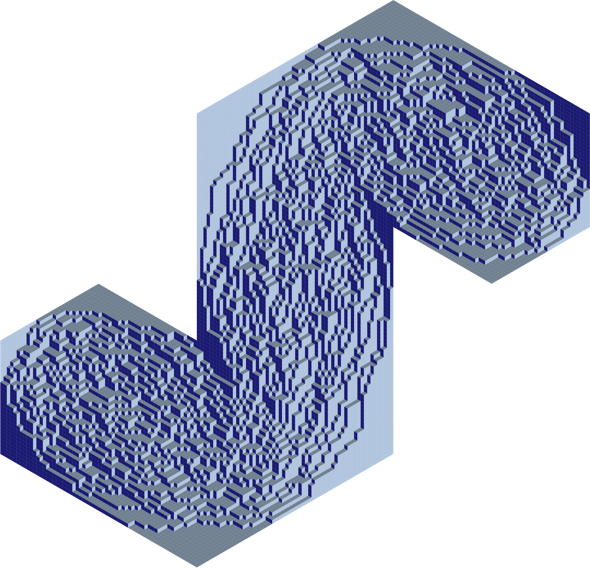}\qquad
\includegraphics[width=0.49\textwidth]{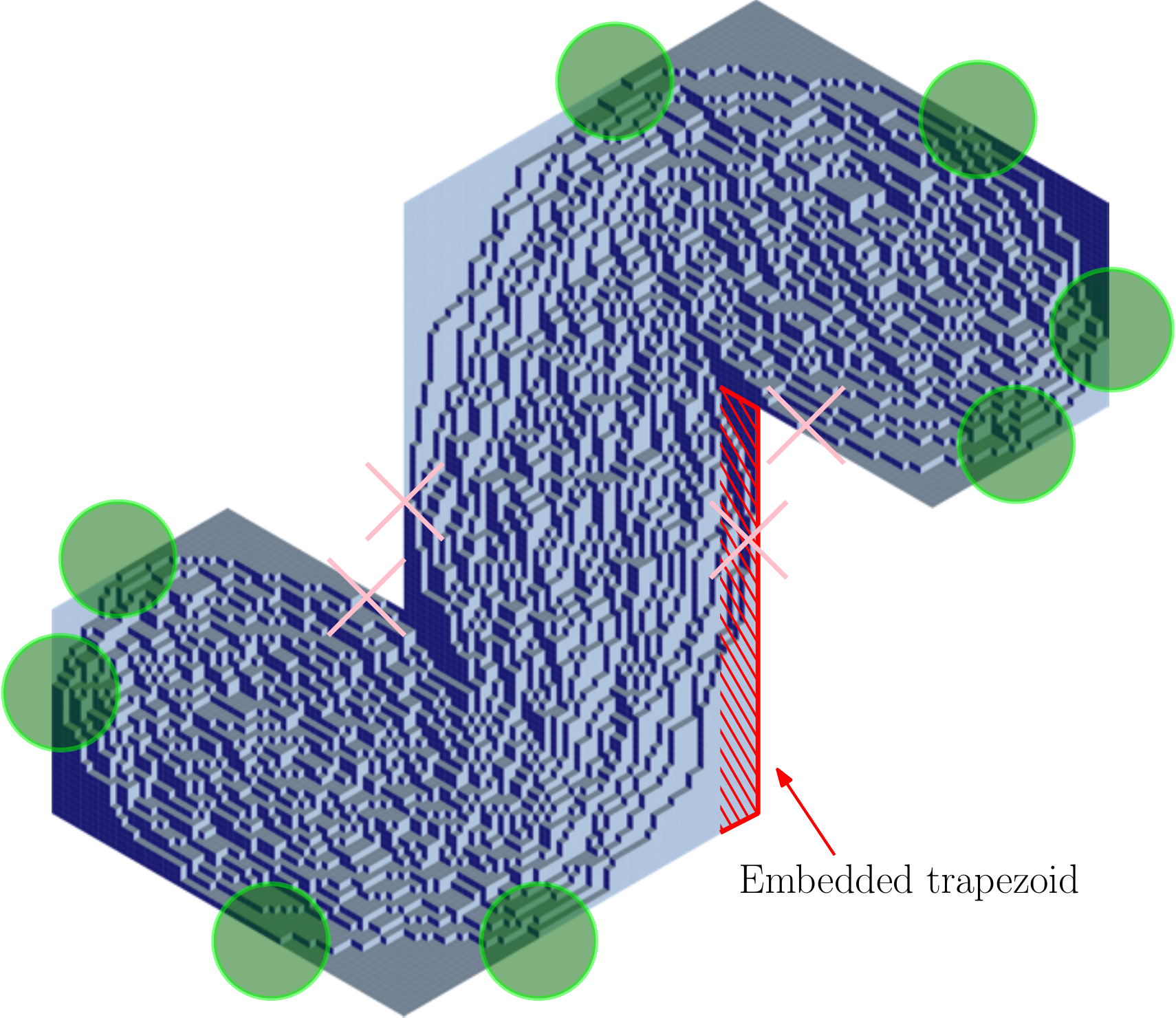}
\caption{Left panel: Sample of a uniformly random lozenge tiling of a large S-shaped domain. Three types of tiles are shown in three colors. Right panel: Theorem \ref{Theorem_GUE_simple} covers convergence to eight instances of the GUE--corners process shown in green circles. It does not cover four other instances shown by pink crosses. One embedded trapezoid is shown in red. The simulations are taken from \cite{KS} and we are grateful to the authors of that paper for allowing us to use their software.
\label{Figure_GUE_cases}}
\end{figure}

Figure \ref{Figure_GUE_cases} illustrates Theorem \ref{Theorem_GUE_simple} by showing the situations in which it applies. The same figure contains several situations where Theorem \ref{Theorem_GUE_simple} does not apply because there are no three straight segments of the boundary inclined by 120 degrees to each other; yet, we believe that the GUE--corners process still appears asymptotically in those situations as well. The reason is that the system creates the straight boundaries on its own inside the frozen regions containing with high probability only one type of lozenges; once we know that an auxiliary straight boundary is there, we can apply a version of Theorem \ref{Theorem_GUE_simple} again. Let us state this version.

\begin{definition}
 Take a lozenge tiling $\mathfrak L\in \Omega (R)$ of a domain $R\subset \mathbb T$. We say that a trapezoid with straight sides $I^{(l)}$, $I$, $I^{(r)}$ is embedded in $\mathfrak L$, if:
 \begin{itemize}
   \item  $I^{(l)}$, $I$, $I^{(r)}$ are three segments drawn on the triangular grid and inclined to each other by 120 degrees in one of the six configurations of Figure \ref{Figure_six}; and
   \item  $I^{(l)}$, $I$, and $I^{(r)}$ are inside (or on the boundary) of $R$; and
   \item No lozenges of $\mathfrak L$ intersect  $I^{(l)}$, $I$, and $I^{(r)}$. In other words, these segments are formed by unions of various sides of lozenges of the tiling.
 \end{itemize}
\end{definition}
We refer to the right panel of Figure \ref{Figure_GUE_cases} for an example of the embedded trapezoid.

\begin{theorem} \label{Theorem_GUE_general}
 Take a sequence of tileable simply-connected domains $R_N\subset \mathbb R$ and corresponding boundary height functions $h_N$. Suppose that $(R_N,h_N)$ converges to some $(\mathfrak R, \mathfrak h)$, where  $\mathfrak{R} \subset \mathbb{R}^2$  is a simply-connected domain with piecewise smooth boundary. Further, suppose that with probability tending to $1$ as $N\to\infty$, each tiling of  $R_N$ has an embedded trapezoid (which depends on $N$, but not on the tiling) with straight sides $I^{(l)}_N$, $I_N$, $I^{(r)}_N$, oriented as in top-left panel of Figure \ref{Figure_six}; in particular, $I_N$ is vertical. Assume that as $N\to\infty$, rescaled by $N$, the lengths of $I_N$, $I_N^{(l)}$, and $I_N^{(r)}$ converge to positive constants. Let $\{y_i^k(N)\}_{1\le i \le k}$ denote the interlacing array of the vertical coordinates of horizontal lozenges adjacent to $I_N$ in uniformly random tiling of $R_N$. Then there exist constants $m_N\in\mathbb R$ and $\sigma\ge 0$, such that
 \begin{equation}
\label{eq_gen_to_GUE}
 \lim_{N\to\infty} \left( \frac{y_i^k(N)- m_N}{\sqrt{N}}\right)_{1\le i \le k} = \bigl(\sigma \xi_i^k\bigr)_{1\le i \le k},
\end{equation}
in the sense of convergence of finite-dimensional distributions (jointly over all $i$ and $k$ such that $1\le i \le k$), and where $\{\xi_i^k\}$ is the GUE--corners process of Definition \ref{Def_GUE_corners}.
\end{theorem}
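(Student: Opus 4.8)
The plan is to deduce Theorem~\ref{Theorem_GUE_general} from Theorem~\ref{Theorem_GUE_simple} by conditioning on the event $\mathcal{E}_N$ that the prescribed trapezoid, with sides $I_N^{(l)}$, $I_N$, $I_N^{(r)}$, is embedded in the uniformly random tiling of $R_N$; by assumption $\mathbb{P}(\mathcal{E}_N) \to 1$ as $N \to \infty$. The first observation is that, because $\mathbb{P}(\mathcal{E}_N) \to 1$, it suffices to prove the asserted finite-dimensional convergence of $\bigl(N^{-1/2}(y_i^k(N) - m_N)\bigr)_{1 \le i \le k}$ for the laws \emph{conditioned on} $\mathcal{E}_N$: for any bounded continuous $f$ of finitely many of the $y_i^k(N)$ one has $\bigl| \mathbb{E}[f] - \mathbb{P}(\mathcal{E}_N)\,\mathbb{E}[f \mid \mathcal{E}_N] \bigr| \le \|f\|_\infty\,\mathbb{P}(\mathcal{E}_N^c) = o(1)$, so the unconditional and conditional limits coincide.

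Next I would extract a subdomain on which the conditioned tiling is uniform. On $\mathcal{E}_N$, no lozenge of the tiling crosses the connected arc $\mathcal{P}_N := I_N^{(l)} \cup I_N \cup I_N^{(r)}$, so $\mathcal{P}_N$ separates $R_N$ into two subregions; let $R_N^\flat$ be the one containing the horizontal lozenges adjacent to $I_N$, and $R_N^\sharp$ the other. Since $\mathcal{P}_N$ is frozen, the height function along its three straight segments is forced by the local rules of Section~\ref{Section_height_function} (it is constant along the vertical $I_N$ and affine with the corresponding slope along $I_N^{(l)}$ and $I_N^{(r)}$); hence both $R_N^\flat$ and its boundary height function $h_N^\flat$ --- equal to this forced function on $\mathcal{P}_N$ and to $h_N$ on the remaining arc $\partial R_N^\flat \subseteq \partial R_N$ --- are deterministic once one conditions on $\mathcal{E}_N$. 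A standard counting argument (tilings of $R_N$ making $\mathcal{P}_N$ frozen are in bijection, by restriction and gluing along $\mathcal{P}_N$, with $\Omega(R_N^\flat) \times \Omega(R_N^\sharp)$) then shows that, conditionally on $\mathcal{E}_N$, the restriction to $R_N^\flat$ of the uniform tiling of $R_N$ is the uniform tiling of $R_N^\flat$; moreover, on $\mathcal{E}_N$, the array $\{y_i^k(N)\}_{1 \le i \le k}$ is, for each fixed $k$ and all large $N$, a function of this restriction. (In the variant where the embedded trapezoid does not reach $\partial R_N$, one instead takes $R_N^\flat$ to be the trapezoid itself and conditions additionally on the tiling outside it; this only makes $h_N^\flat$ random on the single non-frozen side, with fluctuations of order $o(N^{1/2})$ by the height-fluctuation estimate of this paper --- precisely the situation handled inside the proof of Theorem~\ref{Theorem_GUE_simple}, so the rest of the argument is unchanged.)

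With this in hand I would check that $(R_N^\flat; h_N^\flat)$ satisfies the hypotheses of Theorem~\ref{Theorem_GUE_simple}. Each $R_N^\flat$ is tileable (it is carried by a genuine tiling on the nonempty event $\mathcal{E}_N$) and simply-connected; from $(R_N; h_N) \to (\mathfrak{R}; \mathfrak{h})$ and the assumption that the rescaled lengths of $I_N$, $I_N^{(l)}$, $I_N^{(r)}$ converge to positive constants (so, after a translation, these segments converge as subsets of $\mathbb{R}^2$), one gets that $\tfrac1N R_N^\flat$ converges to a simply-connected $\mathfrak{R}^\flat$ with piecewise smooth boundary --- the three limiting segments, in the $120^\circ$ configuration of Figure~\ref{Figure_six}, glued to an arc of $\partial\mathfrak{R}$ --- and that $h_N^\flat$ converges to the induced boundary function, which is affine along the three segments. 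Theorem~\ref{Theorem_GUE_simple}, applied to $(R_N^\flat; h_N^\flat)$ with distinguished segment $I_N$, then produces constants $m_N \in \mathbb{R}$ and $\sigma \ge 0$ with $\bigl(N^{-1/2}(y_i^k(N) - m_N)\bigr)_{1 \le i \le k} \to (\sigma \xi_i^k)_{1 \le i \le k}$ in the sense of finite-dimensional distributions for the uniform tiling of $R_N^\flat$. By the second paragraph this is exactly the convergence conditioned on $\mathcal{E}_N$, and by the first paragraph it upgrades to the unconditional statement \eqref{eq_gen_to_GUE}.

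The hard part is the geometric reduction in the second step: making rigorous that the frozen arc $\mathcal{P}_N$ carves out a subdomain $R_N^\flat$ whose rescalings converge, in the sense of Definition~\ref{Def_domain_convergence}, to a bona fide continuum domain carrying admissible boundary data, and --- in the interior-trapezoid variant --- that conditioning on the probability-$(1-o(1))$ event $\mathcal{E}_N$ does not spoil the $o(N^{1/2})$ control of the height function on the non-frozen side. These are bookkeeping with the definitions of Section~\ref{Section_statement} rather than new probabilistic input; the analytic substance of the result --- in particular the use of the $o(N^{1/2})$ height-fluctuation bound and the trapezoid results of \cite{GP,Novak} --- is already packaged inside Theorem~\ref{Theorem_GUE_simple}.
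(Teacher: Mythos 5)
Your reduction runs into two intertwined problems, one geometric and one logical.

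Geometrically, the claim that $\mathcal{P}_N = I_N^{(l)}\cup I_N\cup I_N^{(r)}$ separates $R_N$ is generically false. Theorem~\ref{Theorem_GUE_general} only requires the three segments to lie \emph{inside} $R_N$; the free endpoints of $I_N^{(l)}$ and $I_N^{(r)}$ will typically be interior to $R_N$, so removing the arc $\mathcal{P}_N$ from the simply-connected $R_N$ leaves it connected. This is not an edge case — it is precisely the situation (the pink crosses in Figure~\ref{Figure_GUE_cases}) that motivates Theorem~\ref{Theorem_GUE_general} over Theorem~\ref{Theorem_GUE_simple}. So your ``main'' case degenerates, and essentially all of the content lives in the parenthetical about taking $R_N^\flat$ to be the trapezoid itself, with a random fourth side.

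Logically, that parenthetical asserts that handling a trapezoid with a random dented side is ``precisely the situation handled inside the proof of Theorem~\ref{Theorem_GUE_simple}, so the rest of the argument is unchanged.'' It isn't: Theorem~\ref{Theorem_GUE_simple} as stated has \emph{deterministic} boundary data, and in this paper it is proved as a \emph{special case} of Theorem~\ref{Theorem_GUE_general}, not the other way around. So your proposal reduces the general statement to the simple one, and the simple one (or rather, its proof) is implicitly assumed to contain the treatment of the random dented side — which is exactly the thing you were supposed to prove. In every instance of either theorem, the fourth side of the embedded trapezoid carries $L\sim N$ random dents $\lambda_1<\dots<\lambda_L$, and Lemma~\ref{Lemma_GUE_trap} only gives the GUE limit after recentering by the \emph{random} $m(\lambda)$ and rescaling by the \emph{random} $\sigma(\lambda)$. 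Passing to deterministic $m_N$ and $\sigma$ is the genuine probabilistic step, and it is not bookkeeping. The paper does it by (i) a summation-by-parts identity $\sum_i\lambda_i = (A+L-1)H_L(A+L)-\sum_y H_L(y)$, which converts $m(\lambda)$ into an average of height values along the dent line, followed by the quadratic-mean inequality $\Var(\sum_{i}\alpha_i)\le K\sum_i\Var\alpha_i$ and Proposition~\ref{Proposition_varianceh} to get $\Var\,m(\lambda)=o(N)$, hence $m(\lambda)-\E m(\lambda)=o(N^{1/2})$ in probability; and (ii) the variational principle (Lemma~\ref{Lemma_var_principle}) to show $\sigma(\lambda)^2$ has a deterministic limit. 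Note that a naive pointwise $o(N^{1/2})$ bound on each $\lambda_i$ does \emph{not} by itself give an $o(N^{1/2})$ bound on the average $\frac1L\sum_i\lambda_i$, since the $\lambda_i$ are strongly dependent; the argument really needs the explicit reduction to the height function and the $o(N)$ variance control of Proposition~\ref{Proposition_varianceh}. Your proposal correctly names the ingredients — Proposition~\ref{Proposition_varianceh} and Lemma~\ref{Lemma_GUE_trap} — but does not supply the mechanism that turns the height-fluctuation bound into control of $m(\lambda)$ and $\sigma(\lambda)$, which is where the proof actually lives.
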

\begin{remark}
 We stated Theorem \ref{Theorem_GUE_general} for the orientation of the top-left panel of Figure \ref{Figure_six}. Applying rotations, we conclude that it also applies for five other orientations.
\end{remark}

Theorem \ref{Theorem_GUE_simple} is a particular case of Theorem \ref{Theorem_GUE_general}: indeed, in this situation the embedded trapezoid is formed by three straight segments of the boundary of the domain. This trapezoid is embedded in each tiling of $R_N$ with probability $1$, because lozenges are never allowed to cross the boundary of the domain.

The conditions of Theorem \ref{Theorem_GUE_general} are not only sufficient, but, in a sense, they are almost necessary: the only part which can be potentially weakened is the linear (in $N$) growth of the lengths of $I^{(l)}_N$, $I_N$, and $I^{(r)}_N$; there might be situations in which these lengths grow sublinearly and convergence to the GUE--corners process still holds. Let us explain necessity of the existence with high probability of the embedded trapezoid.

Imagine that we expect convergence of lozenge tilings to the GUE--corners process in a certain situation (still near a straight segment of a boundary of a domain). The eigenvalues $\xi_i^k$ from Definition \ref{Def_GUE_corners} almost surely interlace, see e.g., \cite[Lecture 20]{Gorin_book}, which means that
$$
 \xi_i^k < \xi_i^{k+1}< \xi_{i+1}^{k+1},\quad 1\le i \le k.
$$
Hence, convergence to the GUE--corners process implies the existence of a similarly interlacing configuration of lozenges. Combinatorially, once we see an interlacing configuration of horizontal lozenges near a vertical segment of the boundary, far up and down along this boundary we should observe only lozenges of types 1 and 3, respectively, cf.\ the left panel of Figure \ref{Figure_e_trap}. In turn, the presence of lozenges of only one type far up and far down, readily leads to the appearance of an embedded trapezoid.

\begin{figure}[t]
\center
\includegraphics[width=0.3\textwidth]{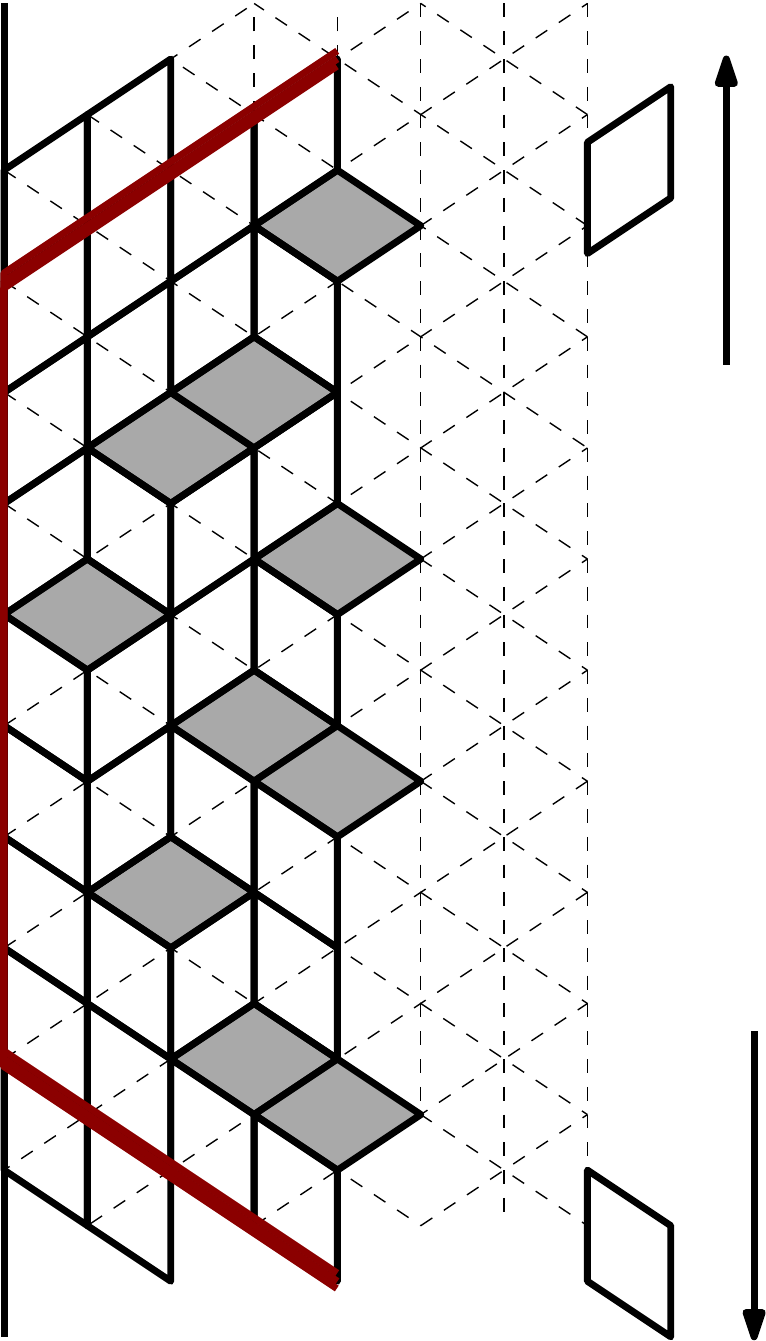}
 \hskip4cm
\includegraphics[width=0.25\textwidth]{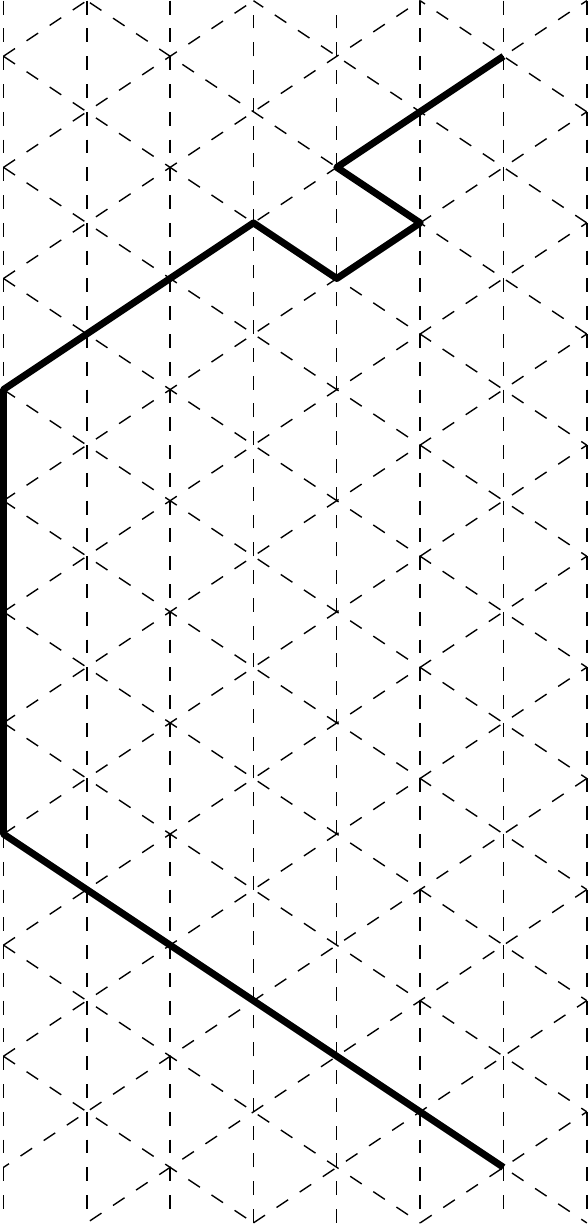}
\caption{Left panel: Interlacing configuration of horizontal lozenges near a vertical segment of the boundary automatically leads to the appearance of an embedded trapezoid. Right panel: A microscopic defect in the boundary can prevent the GUE--corners asymptotics, because we would no longer have an interlacing array of horizontal lozenges.
\label{Figure_e_trap}}
\end{figure}

The conditions under which families of domains admit embedded trapezoids are quite subtle. In general, they cannot be read off of the macroscopic shape $\mathfrak{R}$; indeed, as indicated by the right panel of Figure \ref{Figure_e_trap}, a single microscopic defect along the boundary has the capacity to destroy this property entirely. Therefore, we make no attempt in this paper (beyond Theorem \ref{Theorem_GUE_simple}) to classify under what circumstances this property holds. However, let us mention that the situation for polygonal domains,  as in Example \ref{Example_domains} and Figure \ref{Figure_GUE_cases}, seems promising: one could hope to find embedded trapezoids near most of the straight segments of the boundaries. On the technical level, proving this fact requires a careful analysis of the frozen regions of tilings and showing that with high probability the region occupied by lozenges of one type has not just small amount, but actually no lozenges of other two types. We will not address this point here; it will follow as a consequence of the future work \cite{AH}. In particular, \Cref{Theorem_GUE_general} and results in \cite{AH} will together imply convergence to the GUE--corners process in the four situations indicated by pink crosses in Figure \ref{Figure_GUE_cases}.

\section{Concentration estimate}

\subsection{Statement of the concentration bound}
 An important step in our proof of Theorems \ref{Theorem_GUE_simple} and \ref{Theorem_GUE_general} is the following concentration estimate (possibly of independent interest) for height functions of random tilings on general simply-connected domains.
	
	\begin{prop}
		
		\label{Proposition_varianceh}
		
		Fix a simply-connected domain $\mathfrak{R} \subset \mathbb{R}^2$ with piecewise smooth boundary; a function $\mathfrak{h}: \partial \mathfrak{R} \rightarrow \mathbb{R}$ admitting an admissible extension to $\mathfrak{R}$; and a real number $\varepsilon > 0$. Let $R_1, R_2, \ldots $ denote a family of simply-connected, tileable domains of $\mathbb{T}$, and for each integer $N \ge 1$ let $h_N: \partial R_N \rightarrow \mathbb{Z}$ denote an associated boundary height function; assume that $(R_N; h_N)$ converges to $(\mathfrak{R}, \mathfrak{h})$ in the sense of Definition \ref{Def_domain_convergence}. For each integer $N \ge 1$, let $H_N: R_N \rightarrow \mathbb{Z}$ denote the height function of a uniformly random tiling from $\Omega (R_N)$. Then there exists a constant $C> 1$ (depending on all the above data) such that for any integer $N \ge C$ and any vertex $v \in R_N$, we have $\Var H_N (v) < \varepsilon N$.
		
	\end{prop}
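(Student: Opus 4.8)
\emph{Plan of proof.} The strategy is to reinterpret the height variance through the double-dimer model and then to bound the resulting loop statistics by patching together the bulk local limits of \cite{ULS}. Let \(H_N^{(1)}, H_N^{(2)}\) be the height functions of two independent uniformly random tilings of \(R_N\). They share the deterministic boundary value \(h_N\), so \(D_N := H_N^{(1)} - H_N^{(2)}\) vanishes on \(\partial R_N\), is centered, and
\[
\Var H_N(v) = \tfrac12\, \E\bigl[D_N(v)^2\bigr].
\]
The superposition of the two tilings is a double-dimer configuration — a disjoint union of doubled lozenges and simple loops — and \(D_N\) jumps by \(\pm 1\) across each loop; tracing \(D_N\) along a path from \(\partial R_N\) to \(v\) gives \(D_N(v) = \sum_{L \ni v} \sigma_L\), the sum over double-dimer loops \(L\) surrounding \(v\), with \(\sigma_L \in \{-1, +1\}\). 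By the standard resampling property (conditionally on the unoriented loop configuration each loop is oriented by an independent fair coin, since \(H_N^{(1)}\) and \(H_N^{(2)}\) are independent and uniform), the signs \(\sigma_L\) are conditionally i.i.d., so \(\E[D_N(v)^2 \mid \text{loops}] = N_v\), the number of double-dimer loops surrounding \(v\). Hence \(\Var H_N(v) = \tfrac12 \E[N_v]\), and it remains to prove \(\E[N_v] = o(N)\) uniformly in \(v \in R_N\).

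\emph{The bulk, via \cite{ULS}.} Decompose \(N_v = \sum_{k \ge 0} N_v^{(k)}\), where \(N_v^{(k)}\) counts the loops surrounding \(v\) with diameter in \([2^k, 2^{k+1})\); only \(O(\log N)\) scales occur since \(\diam R_N = O(N)\). Because \(v\) lies in the convex hull of any loop surrounding it, a loop counted by \(N_v^{(k)}\) lies inside \(\overline{B(v, 2^{k+1})}\), so \(N_v^{(k)}\) is a local observable at scale \(2^k\) around \(v\) (and deterministically \(N_v^{(k)} = O(2^k)\), the loops being nested and lattice-separated). Fix a small \(\kappa > 0\). For a scale with \(2^k \le \kappa N\) and with \(B(v, \kappa^{-1} 2^k)\) at distance \(\ge \kappa^{-1} 2^k\) from both \(\partial R_N\) and the arctic curve of the limit shape, the bulk local limit of \cite{ULS} shows that the double-dimer configuration near \(v\), rescaled by \(2^k\), converges in law to the double-dimer model of the ergodic translation-invariant Gibbs measure of the local limit-shape slope at \(v/N\). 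For that Gibbs measure the expected number of loops surrounding the origin with diameter in \([1, 2)\) is a finite constant, uniformly over slopes in compact subsets of the interior of \(\mathcal{T}\); together with a uniform-in-\(N\) tail bound on \(N_v^{(k)}\) (obtainable from the determinantal structure of the single-type lozenge marginals combined with the local limit), this yields \(\E[N_v^{(k)}] = O(1)\) at every such ``bulk'' scale.

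\emph{Frozen regions, the arctic curve, macroscopic loops, and conclusion.} At scales with \(B(v, \kappa^{-1} 2^k)\) deep inside a frozen region, a double-dimer loop of diameter \(\ge 2^k\) there forces at least \(2^k\) lozenges of a ``wrong'' type in \(B(v, 2^{k+1})\), an event of probability \(o_N(1)\) by the law of large numbers for the limit shape and concentration, so \(\E[N_v^{(k)}] = o_N(1)\). It remains to treat the scales comparable to \(N\) (macroscopic loops around \(v\)) and those comparable to the distance from \(v\) to the arctic curve (windows straddling the arctic curve). Here I would cover a path from \(v\) to \(\partial R_N\) by \(O(N^{\delta})\) windows of size \(N^{1-\delta}\), each inside the liquid region, and apply \cite{ULS} (and, near the arctic curve, its Airy-type refinements) in each such window to bound the number of loops of diameter \(\gtrsim N^{1-\delta}\) crossing it; since every loop counted at these scales crosses at least one of the windows, this gives \(\E[N_v^{(k)}] = O(N^{\delta})\) for the \(O(1)\) exceptional scales. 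Summing over all scales yields \(\E[N_v] = O(\log N) + O(N^{\delta}) = o(N)\), hence \(\Var H_N(v) < \varepsilon N\) once \(N\) is large.

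\emph{Main obstacle.} The delicate part is the last step, specifically the scales at which the relevant window straddles the arctic curve or is macroscopic, where the bulk description of \cite{ULS} is unavailable and one must combine local limits on liquid sub-windows with rigidity estimates for individual loops; a pervasive secondary difficulty is establishing the uniform-in-\(N\) integrability needed to upgrade the local convergence in law furnished by \cite{ULS} to convergence of the a priori unbounded expectations \(\E[N_v^{(k)}]\).
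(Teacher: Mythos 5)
Your reduction is exactly the paper's: the paper passes to $\Var[H_N-H_N']$ (Proposition \ref{varianceh2}), introduces the level-set ``distance graph'' $\mathcal{G}$ of $F=H_N-H_N'$, and proves $\Var[F(u)-F(v)\mid\mathscr{S}]=\dist_{\mathcal{G}}(u,v)$ — which is precisely your double-dimer loop count $N_v$ with the conditional $\pm1$ coin flips. So up through ``it remains to prove $\E[N_v]=o(N)$ uniformly,'' you and the paper coincide.

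From there you diverge, and the divergence is where your proof has a real gap. You decompose $N_v$ by \emph{loop diameter} (dyadic scales) and try to bound each scale separately. The paper instead decomposes a carefully chosen \emph{path} from $v$ to $\partial R_N$ into segments and bounds the number of loops crossing each segment. The crucial ingredient that makes the paper's path decomposition work — and that is entirely absent from your sketch — is the structural result of De Silva–Savin (Lemma \ref{nablaf} / \cite{MCFRS}): at any point where $\nabla\mathfrak{H}$ is discontinuous, there is a lattice-direction ray to $\partial\mathfrak{R}$ along which $\mathfrak{H}$ has exactly the extremal slope $0$ or $1$. Along such a ray, by monotonicity of the discrete height both $H_N$ and $H_N'$ are forced to be almost constant (Lemma \ref{distanceg2}), so very few loops cross it. This is precisely how the paper gets across frozen regions and across the arctic curve without any ``Airy-type refinement,'' and without assuming a path through the liquid region exists — which in general it does not (a liquid island surrounded by frozen facets has no liquid path to $\partial R_N$).

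Your treatment of the macroscopic and near-arctic scales — ``cover a path from $v$ to $\partial R_N$ by $O(N^\delta)$ windows of size $N^{1-\delta}$, each inside the liquid region'' — therefore cannot work as stated, for two reasons: (i) such a covering inside the liquid region need not exist; (ii) a window of size $N^{1-\delta}$ is far outside the regime where \cite{ULS} applies (ULS gives TV-convergence on fixed-size windows $\mathcal{B}_D(v)$, $D$ independent of $N$, not on mesoscopic windows), and invoking unproven ``Airy-type refinements'' near the arctic curve is beyond the paper's toolbox. Your secondary worry about upgrading convergence-in-law to convergence of the unbounded observables $\E[N_v^{(k)}]$ is also genuine, but the paper sidesteps it cleanly: rather than trying to show loop counts converge, it combines the TV bound of \cite{ULS} with the finite log-variance bound for $\mu_{s,t}$ from \cite{CTFMD} (Lemma \ref{hmoment}) to get $\E[\dist_{\mathcal{G}}(u,v)]\lesssim\log D$ for nearby $u,v$ in the liquid bulk (Lemma \ref{distancegestimate}), then chains (Lemma \ref{distancegestimate2}). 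In short: same reduction, same building blocks (\cite{ULS} + Gibbs-measure variance bound), but the paper's path decomposition with the MCFRS degenerate-slope rays is the key idea missing from your proposal, and without it the macroscopic/arctic scales remain unresolved.
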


	\begin{remark}		
	\cite[Theorem 21]{LSRT} implies that $\Var H_N (v) \le cN$ for a certain explicit constant $c >0$. Our results shows that this constant can, in fact, be chosen to be arbitrarily small as long as $N$ is large enough.
	\end{remark}


We deduce \Cref{Proposition_varianceh} from the following statement, which is proven in the rest of this section.
	
	\begin{prop}
		
		\label{varianceh2}
		
		In the setting of Proposition \ref{Proposition_varianceh}, let $H_N'$ denote the height function of another uniformly random tiling from $\Omega (R_N)$, independent of $H_N$. For $N > C$, we have $\Var \big[ H_N(v) - H_N' (v) \big] < \varepsilon N$ for any $v \in R_N$.
	
	\end{prop}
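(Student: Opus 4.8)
The plan is to express the fluctuations of the height difference through the double--dimer model, to decompose them along a lattice path into a telescoping sum of mesoscopic increments, and to control each increment by a single instance of the bulk local limit of \cite{ULS}, patching the estimates together at the end.

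\textbf{Reduction.} Since the boundary height function is determined by the domain, $H_N$ and $H_N'$ agree on $\partial R_N$, so $f_N := H_N - H_N'$ vanishes on $\partial R_N$. As $H_N$ and $H_N'$ are i.i.d., $f_N(v)$ is symmetric about $0$, whence $\E f_N(v) = 0$ and $\Var\big[H_N(v) - H_N'(v)\big] = \E[f_N(v)^2]$; it thus suffices to bound $\E[f_N(v)^2]$. The point of passing to $f_N$ rather than $H_N$ is that $f_N$ carries no deterministic ``slope'' part, so its mesoscopic increments are automatically centered. Also, $f_N$ is a difference of two functions whose edge increments lie in $\{0,1\}$, so $|f_N(w)| \le 2\,\dist(w, \partial R_N)$ for all $w$; in particular we may assume $\dist(v, \partial R_N) \ge \sqrt{\varepsilon N}$, as otherwise $\E[f_N(v)^2] \le 4\varepsilon N$ and we are done after rescaling $\varepsilon$.

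\textbf{Path decomposition.} Superimposing the two tilings underlying $H_N$ and $H_N'$ produces a double--dimer configuration (doubled lozenges together with disjoint loops), and it is classical that $f_N$ is locally constant off the loops, changing by $\pm1$ across each. Fix a nearest--neighbor lattice path $\gamma$ in $R_N$ of length $O(N)$ (possible since $R_N$ has diameter $O(N)$) from $v$ to a vertex $p$ with $\dist(p, \partial R_N) = \lceil \sqrt{\varepsilon N} \rceil$, routed so as to stay at distance $\ge \sqrt{\varepsilon N}$ from $\partial R_N$ and, apart from $O(1)$ unavoidable excursions, at macroscopic distance from the arctic boundary of $\mathfrak{R}$. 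Cut $\gamma$ into $m = O(N/\ell)$ consecutive blocks of diameter $\le \ell$, with endpoints $v = w_0, w_1, \dots, w_m = p$, where $1 \ll \ell = \ell_N \ll N$ is a mesoscopic scale to be chosen; put $S_j := f_N(w_{j-1}) - f_N(w_j)$, so $f_N(v) = f_N(p) + \sum_{j=1}^m S_j$. By Cauchy--Schwarz and $\E[f_N(p)^2] \le 4\varepsilon N$,
\[
\E[f_N(v)^2] \;\le\; 8\varepsilon N + 2\Big( \sum_{j=1}^m \E[S_j^2]^{1/2} \Big)^{2}.
\]

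\textbf{Mesoscopic increments via \cite{ULS}.} Each $S_j$ is the difference of the height increments of two independent tilings over a region of diameter $\le \ell$. Classify the blocks. For a block in the liquid region at macroscopic distance from $\partial \mathfrak{R}$ and the arctic boundary, the bulk local limit of \cite{ULS} identifies, for $\ell = o(N)$, the law of each height increment with that of the translation--invariant ergodic Gibbs measure of the local liquid slope up to negligible error; since that measure is log--correlated, $\E[S_j^2] = O(\log \ell)$ --- here the a priori bound $\E[S_j^2] = O(\ell)$ of \cite{LSRT} supplies the uniform integrability needed to pass the variance through the distributional limit. For a block inside a frozen facet, standard concentration of the height function forces $S_j = 0$ off an event of exponentially small probability, so $\E[S_j^2] = o(1)$. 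Finally there are only $O(1)$ ``transitional'' blocks --- those meeting the arctic boundary or a liquid--frozen interface --- and for these we merely invoke $\E[S_j^2] = O(\ell)$ from \cite{LSRT}. Summing, $\sum_{j} \E[S_j^2]^{1/2} \le C (N/\ell) \sqrt{\log \ell} + C \sqrt{\ell}$.

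\textbf{Patching and the main obstacle.} Substituting gives $\E[f_N(v)^2] \le 8\varepsilon N + C'\big( N^2 \ell^{-2} \log \ell + \ell \big)$, and the choice $\ell = \ell_N = \lfloor N/\log N \rfloor$ makes the bracket $O\big((\log N)^3\big) + O(N/\log N) = o(N)$; hence $\E[f_N(v)^2] < \varepsilon N$ for $N$ large, uniformly in $v$ (after rescaling $\varepsilon$ at the outset), which is the claim. The main obstacle is the mesoscopic step: upgrading the distributional bulk local limit of \cite{ULS} to a second--moment bound on height increments that is uniform in $v$ and in the placement of the block --- in particular carrying out the passage of the variance through the local limit with only the linear a priori bound of \cite{LSRT} available --- and arranging the routing of $\gamma$ through an essentially arbitrary (possibly ``pinched'') geometry so that the number of transitional blocks, and the contributions of the frozen--facet blocks, remain under control. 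It is these uniformities, rather than any single instance of \cite{ULS}, that occupy the bulk of the work.
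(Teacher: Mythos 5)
Your overall plan (pass to $F=H_N-H_N'$, use the double-dimer/loop picture, decompose along a path, control pieces by local limits, patch) is the same template as the paper's, but the quantitative mechanism you use to glue the pieces fails, and this is where the paper's genuinely new idea lives.

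The core gap is in the mesoscopic step. You set $S_j=f_N(w_{j-1})-f_N(w_j)$ along a path chopped into $m=O(N/\ell)$ blocks of diameter $\ell$ and bound $\E\bigl[(\sum S_j)^2\bigr]$ by Minkowski, $\bigl(\sum\E[S_j^2]^{1/2}\bigr)^2$. For this to give $o(N)$ you need $\E[S_j^2]=O(\log\ell)$ on a block of diameter $\ell$ with $\ell\gg\sqrt{N\log N}$ (indeed you take $\ell=N/\log N$). But the local-limit input (Lemma \ref{convergemu}, i.e.\ \cite{ULS}) is a \emph{fixed-radius} statement: for each fixed $D$ and $\omega$, there is a threshold $N_0(D,\omega,\dots)$ beyond which the tiling restricted to $\mathcal B_D(v)$ is $\omega$-close in total variation to the ergodic Gibbs measure. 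There is no control over how $N_0$ depends on $D$, so nothing licenses taking $D=\ell_N\to\infty$, let alone at the scale $N/\log N$. If instead you keep $D$ fixed (which is what \cite{ULS} supports), the Minkowski/Cauchy--Schwarz bound gives $(N/D)^2\log D=\Theta(N^2)$, which is useless --- the increments $S_j$ need not decorrelate, and Cauchy--Schwarz is tight for perfectly correlated summands. So your decomposition is structurally lossy at fixed $D$, and to rescue it you assumed a mesoscopic local-limit statement that is strictly stronger than the cited input.

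The paper's proof circumvents this with the distance-graph construction, which is exactly the extra ingredient your argument is missing. Decomposing $R_N$ into level sets of $F$ (the regions between the double-dimer loops) produces a tree $\mathcal G$, and the crucial Lemma \ref{uvf} says that \emph{conditionally on the loop configuration}, $F$ is a simple random walk on $\mathcal G$, so $\Var[F(u)-F(v)\mid\mathscr S]=\dist_{\mathcal G}(u,v)$. The object to control is then $\E[\dist_{\mathcal G}(v,\partial R_N)]$, and $\dist_{\mathcal G}$ is a genuine tree metric, hence obeys the triangle inequality: along any lattice path $\zeta_0,\dots,\zeta_k$, one has the \emph{deterministic} bound $\dist_{\mathcal G}(v,\partial R_N)\le\sum_i\dist_{\mathcal G}(\zeta_{i-1},\zeta_i)$, so expectations add linearly in the number of blocks. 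With fixed $D$ and $\E[\dist_{\mathcal G}(\zeta_{i-1},\zeta_i)]=O(\log D)$ from the local limit, this gives $(N/D)\cdot O(\log D)$, which is $<\varepsilon N$ by choosing $D$ large. Your Cauchy--Schwarz on $\Var$ has no such additivity. Separately, your treatment of frozen blocks (``exponentially small probability'') overstates what is available for general domains; the paper only uses the variational principle (Lemma \ref{Lemma_var_principle}) to get ``probability $<\delta$'' and a deterministic bound on $\dist_{\mathcal G}$ along directions of extremal slope (Lemma \ref{distanceg2}), and it also needs the facet-boundary structure result of \cite{MCFRS} to route the path --- neither appears in your sketch.
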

	
	\begin{proof}[Proof of \Cref{Proposition_varianceh} assuming \Cref{varianceh2}]
		
		Since $H_N$ and $H_N'$ are independent, we have that $2 \Var H_N(v) = \Var \big[ H_N(v) - H_N' (v) \big] < \varepsilon N$, where \Cref{varianceh2} was applied to deduce the inequality.		
	\end{proof}

	Throughout the remainder of this section, we adopt the setting of \Cref{varianceh2}; abbreviate $H:=H_N$ and $H':=H_N'$; and define $F: R_N \rightarrow \mathbb{Z}$ by $F(v) = H(v) - H'(v)$. We refer to the left side of \Cref{functionf} for an example.

\begin{figure}
\begin{center}
\begin{tikzpicture}[
>=stealth,
auto,
style={
scale = .5,
}
]
\draw[dotted, gray] (0, 0) -- (7, 3.5);
\draw[dotted, gray] (1, -.5) -- (8, 3);
\draw[dotted, gray] (2, -1) -- (8, 2);
\draw[dotted, gray] (3, -1.5) -- (8, 1);
\draw[dotted, gray] (4, -2) -- (8, 0);
\draw[dotted, gray] (0, 1) -- (6, 4);
\draw[dotted, gray] (0, 2) -- (5, 4.5);
\draw[dotted, gray] (0, 3) -- (4, 5);

\draw[dotted, gray] (1, -.5) -- (1, 4.5);
\draw[dotted, gray] (2, -1) -- (2, 5);
\draw[dotted, gray] (3, -1.5) -- (3, 5.5);
\draw[dotted, gray] (4, -2) -- (4, 5);
\draw[dotted, gray] (5, -2.5) -- (5, 4.5);
\draw[dotted, gray] (6, -2) -- (6, 4);
\draw[dotted, gray] (7, -1.5) -- (7, 3.5);

\draw[dotted, gray] (0, 1) -- (6, -2);
\draw[dotted, gray] (0, 2) -- (7, -1.5);
\draw[dotted, gray] (0, 3) -- (8, -1);
\draw[dotted, gray] (0, 4) -- (8, 0);
\draw[dotted, gray] (1, 4.5) -- (8, 1);
\draw[dotted, gray] (2, 5) -- (8, 2);

\draw[-, thick, black] (0, 0) -- (5, -2.5);
\draw[-, thick, black] (0, 0) -- (0, 4);
\draw[-, thick, black] (5, -2.5) -- (8, -1);
\draw[-, thick, black] (0, 4) -- (3, 5.5);
\draw[-, thick, black] (8, -1) -- (8, 3);
\draw[-, thick, black] (8, 3) -- (3, 5.5);

\fill[fill=black] (0, 0) circle [radius = .1] node[left, scale = .5]{$0$};
\fill[fill=black] (1, -.5) circle [radius = .1] node[below, scale = .5]{$0$};
\fill[fill=black] (2, -1) circle [radius = .1] node[below, scale = .5]{$0$};
\fill[fill=black] (3, -1.5) circle [radius = .1] node[below, scale = .5]{$0$};
\fill[fill=black] (4, -2) circle [radius = .1] node[below, scale = .5]{$0$};
\fill[fill=black] (5, -2.5) circle [radius = .1] node[below, scale = .5]{$0$};

\fill[fill=black] (0, 1) circle [radius = .1] node[left, scale = .5]{$0$};
\fill[fill=black] (1, .5) circle [radius = .1] node[below = 1, scale = .5]{$0$};
\fill[fill=black] (2, 0) circle [radius = .1] node[below = 1, scale = .5]{$0$};
\fill[fill=blue] (3, -.5) circle [radius = .1] node[below = 1, scale = .5]{$1$};
\fill[fill=black] (4, -1) circle [radius = .1] node[below = 1, scale = .5]{$0$};
\fill[fill=black] (5, -1.5) circle [radius = .1] node[below = 1, scale = .5]{$0$};
\fill[fill=black] (6, -2) circle [radius = .1] node[below, scale = .5]{$0$};

\fill[fill=black] (0, 2) circle [radius = .1] node[left, scale = .5]{$0$};
\fill[fill=black] (1, 1.5) circle [radius = .1] node[below = 1, scale = .5]{$0$};
\fill[fill=black] (2, 1) circle [radius = .1] node[below = 1, scale = .5]{$0$};
\fill[fill=black] (3, .5) circle [radius = .1] node[below = 1, scale = .5]{$0$};
\fill[fill=black] (4, 0) circle [radius = .1] node[below = 1, scale = .5]{$0$};
\fill[fill=black] (5, -.5) circle [radius = .1] node[below = 1, scale = .5]{$0$};
\fill[fill=black] (6, -1) circle [radius = .1] node[below = 1, scale = .5]{$0$};
\fill[fill=black] (7, -1.5) circle [radius = .1] node[below, scale = .5]{$0$};

\fill[fill=black] (0, 3) circle [radius = .1] node[left, scale = .5]{$0$};
\fill[fill=brown!80!black] (1, 2.5) circle [radius = .1] node[below = 1, scale = .5]{$1$};
\fill[fill=black] (2, 2) circle [radius = .1] node[below = 1, scale = .5]{$0$};
\fill[fill=red] (3, 1.5) circle [radius = .1] node[below = 1, scale = .5]{$-1$};
\fill[fill=red] (4, 1) circle [radius = .1] node[below = 1, scale = .5]{$-1$};
\fill[fill=black] (5, .5) circle [radius = .1] node[below = 1, scale = .5]{$0$};
\fill[fill=black] (6, 0) circle [radius = .1] node[below = 1, scale = .5]{$0$};
\fill[fill=orange] (7, -.5) circle [radius = .1] node[below = 1, scale = .5]{$-1$};
\fill[fill=black] (8, -1) circle [radius = .1] node[right, scale = .5]{$0$};

\fill[fill=black] (0, 4) circle [radius = .1] node[left, scale = .5]{$0$};
\fill[fill=black] (1, 3.5) circle [radius = .1] node[above = 1, scale = .5]{$0$};
\fill[fill=black] (2, 3) circle [radius = .1] node[above = 1, scale = .5]{$0$};
\fill[fill=red] (3, 2.5) circle [radius = .1] node[above = 1, scale = .5]{$-1$};
\fill[fill=green] (4, 2) circle [radius = .1] node[above = 1, scale = .5]{$-2$};
\fill[fill=red] (5, 1.5) circle [radius = .1] node[above = 1, scale = .5]{$-1$};
\fill[fill=black] (6, 1) circle [radius = .1] node[above = 1, scale = .5]{$0$};
\fill[fill=orange] (7, .5) circle [radius = .1] node[above = 1, scale = .5]{$-1$};
\fill[fill=black] (8, 0) circle [radius = .1] node[right, scale = .5]{$0$};

\fill[fill=black] (1, 4.5) circle [radius = .1] node[above = 1, scale = .5]{$0$};
\fill[fill=black] (2, 4) circle [radius = .1] node[above = 1, scale = .5]{$0$};
\fill[fill=black] (3, 3.5) circle [radius = .1] node[above = 1, scale = .5]{$0$};
\fill[fill=red] (4, 3) circle [radius = .1] node[above = 1, scale = .5]{$-1$};
\fill[fill=red] (5, 2.5) circle [radius = .1] node[above = 1, scale = .5]{$-1$};
\fill[fill=black] (6, 2) circle [radius = .1] node[above = 1, scale = .5]{$0$};
\fill[fill=black] (7, 1.5) circle [radius = .1] node[above = 1, scale = .5]{$0$};
\fill[fill=black] (8, 1) circle [radius = .1] node[right, scale = .5]{$0$};

\fill[fill=black] (2, 5) circle [radius = .1] node[above = 1, scale = .5]{$0$};
\fill[fill=black] (3, 4.5) circle [radius = .1] node[above = 1, scale = .5]{$0$};
\fill[fill=black] (4, 4) circle [radius = .1] node[above = 1, scale = .5]{$0$};
\fill[fill=black] (5, 3.5) circle [radius = .1] node[above = 1, scale = .5]{$0$};
\fill[fill=black] (6, 3) circle [radius = .1] node[above = 1, scale = .5]{$0$};
\fill[fill=yellow!75!orange] (7, 2.5) circle [radius = .1] node[above = 1, scale = .5]{$1$};
\fill[fill=black] (8, 2) circle [radius = .1] node[right, scale = .5]{$0$};

\fill[fill=black] (3, 5.5) circle [radius = .1] node[above = 1, scale = .5]{$0$};
\fill[fill=black] (4, 5) circle [radius = .1] node[above = 1, scale = .5]{$0$};
\fill[fill=black] (5, 4.5) circle [radius = .1] node[above = 1, scale = .5]{$0$};
\fill[fill=black] (6, 4) circle [radius = .1] node[above = 1, scale = .5]{$0$};
\fill[fill=black] (7, 3.5) circle [radius = .1] node[above = 1, scale = .5]{$0$};
\fill[fill=black] (8, 3) circle [radius = .1] node[right, scale = .5]{$0$};

\end{tikzpicture}
\qquad\qquad
\begin{tikzpicture}[
>=stealth,
auto,
style={
scale = .5
}
]

\draw[thick, ->,gray] (18, 6.5) -- (18, 3.8);
\draw[thick, ->,gray] (18, 3.5) -- (18, .8);
\draw[thick, ->,gray] (18, 6.5) -- (13, 3.8);
\draw[thick, ->,gray] (18, 6.5) -- (15.5, 3.8);
\draw[thick, ->,gray] (18, 6.5) -- (20.5, 3.8);
\draw[thick, ->,gray] (18, 6.5) -- (23, 3.8);

\fill[fill=black] (18, 6.5) circle[radius = .3] node[above = 5, scale = .5]{$F(S_1) = 0$};
\fill[fill=blue] (13, 3.5) circle[radius = .3] node[below = 5, scale = .5]{$F(S_2) = 1$};
\fill[fill=brown!80!black] (15.5, 3.5) circle[radius = .3] node[below = 5, scale = .5]{$F(S_3) = 1$};
\fill[fill=red] (18, 3.5) circle[radius = .3] node[below = 5, scale = .5]{$F(S_4) = -1$};
\fill[fill=green] (18, .5) circle[radius = .3] node[below = 5, scale = .5]{$F(S_5) = -2$};
\fill[fill=orange] (20.5, 3.5) circle[radius = .3] node[below = 5, scale = .5]{$F(S_6) = -1$};
\fill[fill=yellow!75!orange] (23, 3.5) circle[radius = .3] node[below = 5, scale = .5]{$F(S_7) = 1$};
\end{tikzpicture}
\end{center}

\caption{\label{functionf} Left panel: an example for the difference function $F$ on a hexagonal domain (the original associated height functions $H$ and $H'$ are not shown). Right panel: the associated distance graph.}
\end{figure}
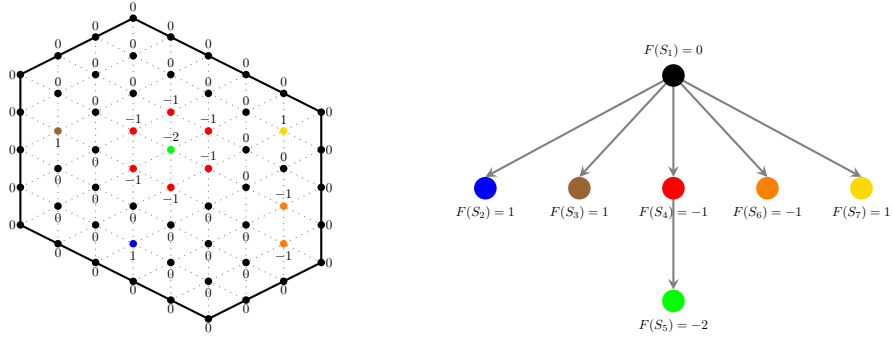

	\subsection{Distance Graphs}
	
	\label{HeightConcentrate}

	A family $\mathscr{S} = (S_1, S_2, \ldots , S_m)$ of mutually disjoint, connected subsets with $\bigcup_{i = 1}^m S_i = R_N$ is called a \emph{level set decomposition of $R_N$ with respect to $F$} if the following hold.
	
	\begin{enumerate}
		
		\item If $v, v' \in S_i$ for some $1 \le i \le m$, then $F(v) = F(v')$.
		\item If $v \in S_i$ and $v' \in S_j$ for some $1 \le i < j \le m$, and if $v$ and $v'$ are adjacent, then $F(v) \ne F(v')$.
	
	\end{enumerate}

	\noindent We then set $F (S_i) = F(v)$ for any $v \in S_i$. We further say that $S_i, S_j \in \mathscr{S}$ are \emph{adjacent} if $i \ne j$ and there exist $v \in S_i$ and $v' \in S_j$ such that $v$ and $v'$ are adjacent in $\mathbb{T}$. Observe, in particular, that
	 \begin{flalign}
	 	\label{ijgsgs}
	\big| F(S_i) - F(S_j) \big| = 1, \qquad \text{whenever $S_i$ and $S_j$ are adjacent}.
	\end{flalign}

	\noindent We additionally say that $S_j$ is \emph{exterior adjacent} to $S_i$ if $S_j$ is contained in the infinite connected component of $\mathbb{T} \setminus S_i$; we otherwise call $S_j$ \emph{interior adjacent} to $S_i$.
	
	\begin{lem}
		
	\label{sjexterior}
	
	For any $i \in [1, m]$, there is at most one element of $\mathscr{S}$ that is exterior adjacent to $S_i$.
	
	 \end{lem}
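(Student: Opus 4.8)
The plan is to fix $i$, write $c = F(S_i)$, let $U$ be the (unique) infinite connected component of $\mathbb{T} \setminus S_i$, and let $\partial_U$ be the set of vertices of $U$ that are adjacent in $\mathbb{T}$ to a vertex of $S_i$. The first step is to observe that every element of $\mathscr{S}$ exterior adjacent to $S_i$ must meet $\partial_U$: if $S_j$ is exterior adjacent to $S_i$ then $S_j \subseteq U$, and some $v \in S_j$ is adjacent to $S_i$, so $v \in S_j \cap \partial_U$. Since the parts of $\mathscr{S}$ are pairwise disjoint, it therefore suffices to prove that either $\partial_U$ is disjoint from every part (so $S_i$ has no exterior adjacent element), or $\partial_U$ is contained in a single part (so every exterior adjacent $S_j$, meeting $\partial_U$, equals that part). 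I would then split into two cases according to whether $S_i$ meets $\partial R_N$.

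In the case $S_i \cap \partial R_N \neq \emptyset$, I would argue that $S_i$ has no exterior adjacent element at all. Because $H$ and $H'$ agree on $\partial R_N$, the function $F$ vanishes on $\partial R_N$; as $F$ is constant on $S_i$ this forces $c = 0$. Moreover $\partial R_N$ is connected (since $R_N$ is simply-connected), has constant $F$-value $0$, and meets $S_i$, so (the part $S_i$ being a connected component of the level set $\{F = 0\}$, by the second defining property of a level set decomposition) $\partial R_N \subseteq S_i$. Consequently no edge of $\mathbb{T}$ joins $R_N \setminus S_i$ to $\mathbb{T} \setminus R_N$, since such an edge would have an endpoint in $\partial R_N \subseteq S_i$; hence every connected component of $\mathbb{T} \setminus S_i = (R_N \setminus S_i) \cup (\mathbb{T} \setminus R_N)$ lies entirely in $R_N \setminus S_i$ or entirely in $\mathbb{T} \setminus R_N$. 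As $R_N \setminus S_i$ is finite, $U \subseteq \mathbb{T} \setminus R_N$; since the parts of $\mathscr{S}$ cover exactly $R_N$, no part lies in $U$, and we are done in this case.

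In the case $S_i \cap \partial R_N = \emptyset$, every vertex adjacent to $S_i$ lies in $R_N$ (an edge from $S_i$ to $\mathbb{T} \setminus R_N$ would place a vertex of $S_i$ in $\partial R_N$), so $\partial_U \subseteq R_N$. Each $v \in \partial_U$ lies in a part $S_k$ with $k \ne i$ which is adjacent to $S_i$, so $F(v) \in \{c - 1, c + 1\}$ by \eqref{ijgsgs}. If two adjacent vertices $v, v' \in \partial_U$ lay in different parts, those parts would be adjacent, forcing $|F(v) - F(v')| = 1$ by \eqref{ijgsgs}; but no two elements of $\{c - 1, c + 1\}$ differ by exactly $1$, a contradiction. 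Hence adjacent vertices of $\partial_U$ always lie in the same part, and since $\partial_U$ is connected it lies in a single part of $\mathscr{S}$. With the reduction of the first paragraph, this finishes the argument.

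The one genuinely non-formal ingredient, and the step I expect to cost the most, is the planar-topology input that the exterior ring $\partial_U$ of a finite connected vertex set of the plane triangulation $\mathbb{T}$ is connected (used in the second case), together with the connectedness of $\partial R_N$ for a simply-connected domain (used in the first case). Both are standard --- for $\partial_U$ one can trace the boundary of the unbounded complementary region of the union of the closed faces of $\mathbb{T}$ incident to $S_i$ to exhibit a closed walk in $\mathbb{T}$ through every vertex of $\partial_U$ --- but they should be recorded explicitly. It is also worth remarking that \eqref{ijgsgs}, hence the whole argument, relies on $F = H - H'$ being a difference of two height functions, so that adjacent level sets differ by exactly $1$; for a general integer-valued $F$ obeying only the two defining conditions of a level set decomposition, $S_i$ may have several exterior adjacent elements.
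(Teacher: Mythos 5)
Your proof is correct and uses the paper's core idea: the ring of vertices of the unbounded component $U$ adjacent to $S_i$ is connected, all its $F$-values differ from $F(S_i)$ by exactly $1$ (hence share a parity), so no two adjacent ring vertices can lie in different parts, and any exterior adjacent part must contain some ring vertex. You are slightly more careful than the paper: the paper orders $\partial\mathcal{C}$ cyclically and applies \eqref{ijgsgs} directly, which tacitly assumes $\partial\mathcal{C}\subseteq R_N$ (so that $F$ is defined there), while your explicit split into the cases $S_i\cap\partial R_N=\emptyset$ and $S_i\cap\partial R_N\neq\emptyset$ makes this hypothesis visible and disposes of the root part cleanly, at the modest cost of invoking the connectedness of $\partial R_N$.
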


 	\begin{proof}
 		
 	Let $\mathcal{C}$ denote the (unique) infinite connected component of $\mathbb{T} \setminus S_i$; set $\partial \mathcal{C} = \{ v_1, v_2, \ldots , v_K \}$. Since $S_i$ is connected, we may order the $\{ v_k \}$ such that $v_k$ is adjacent to $v_{k + 1}$ for each $k \in [1, K]$, where $v_{K + 1} = v_1$. Then, since each $v_k$ is not in $S_i$ but is adjacent to a vertex in $S_i$, \eqref{ijgsgs} implies that $\big| F(v_k) - F(S_i) \big| = 1$ for each $k \in [1, K]$. In particular, all of the $F(v_k)$ are of the same parity, and so $\big| F(v_k) - F(v_{k + 1}) \big| \ne 1$ for each $k$. Since $\{ v_k, v_{k + 1} \}$ are adjacent it follows again from \eqref{ijgsgs} that $v_k$ and $v_{k + 1}$ reside in the same $S_j$. Hence, each $v_k \in \mathcal{C}$ belongs to the same $S_j$, meaning that there is at most one element of $\mathscr{S}$ that is exterior adjacent to $S_i$.
 	\end{proof}

	Given this notation and \Cref{sjexterior}, we define the distance graph.
	
	\begin{definition}
		
		\label{graphg}
		
	Define the (random) \emph{distance graph} $\mathcal{G} = \mathcal{G}_{\mathscr{S}}$, which is a rooted, directed tree with $m$ vertices that are indexed by the elements of $\mathscr{S}$, such that $S_i$ is a child of $S_j$ if and only if $S_j$ is exterior adjacent to $S_i$. The root of the tree is the unique element of $\mathscr{S}$ that contains the boundary $\partial R_N$ of $R_N$.
	
	\end{definition}

	Observe that $\mathcal{G}_{\mathscr{S}}$ is measurable with respect to $\mathscr{S}$. An example is depicted on the right side of \Cref{functionf}. In what follows, we let $\dist_{\mathcal{G}} (S_i, S_j)$ denote the distance between $S_i$ and $S_j$ under $\mathcal{G}$, and let $\dist_{\mathcal{G}} (u, v) = \dist_{\mathcal{G}} (S_i, S_j)$ whenever $v \in S_i$ and $u \in S_j$.
	
	The use of these notions is provided by the following lemma.

	\begin{lem}
		
		\label{uvf}
		
		For any $u, v \in R_N$, we have $\Var \big[ F(u) - F(v) \big| \mathscr{S} \big] = \dist_{\mathcal{G}} (u, v)$.
	
	\end{lem}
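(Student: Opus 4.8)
The plan is to show that, conditionally on $\mathscr{S}$, the increments of $F$ along the edges of the tree $\mathcal{G}$ form a family of i.i.d.\ uniform $\pm 1$ random variables; the lemma then follows by a one-line variance computation. First, since $H$ and $H'$ share the boundary height function $h_N$, we have $F \equiv 0$ on $\partial R_N$, so $F$ vanishes on the root component of $\mathcal{G}$. For an edge $e$ of $\mathcal{G}$ joining a child $S_a$ to its parent $S_b$, set $X_e = F(S_a) - F(S_b)$; as $S_a$ and $S_b$ are adjacent, \eqref{ijgsgs} gives $X_e \in \{-1, +1\}$. Telescoping along the root-to-$S_k$ path in $\mathcal{G}$ yields $F(S_k) = \sum_e X_e$, the sum being over that path. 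Consequently, for $v \in S_i$ and $u \in S_j$, the contributions of the common initial segment (from the root to the lowest common ancestor of $S_i$, $S_j$) cancel, and $F(u) - F(v) = F(S_j) - F(S_i) = \sum_{e \in \gamma} \pm X_e$, where $\gamma$ is the tree path joining $S_i$ and $S_j$ and the number of summands is exactly $\dist_{\mathcal{G}}(u,v)$. Thus it suffices to prove that, given $\mathscr{S}$, the vector $(X_e)_{e \in E(\mathcal{G})}$ is uniform on $\{-1,+1\}^{E(\mathcal{G})}$.

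To that end I will construct, for every edge $e$ of $\mathcal{G}$, a sign-flipping involution of the set $\mathcal{P}(\mathscr{S})$ of all pairs $(H, H') \in \Omega(R_N)^2$ whose difference has level set decomposition $\mathscr{S}$ (this set is nonempty, as it contains the realized pair). Let $e$ join the child $S_i$ to its parent $S_j$, and let $D_i$ be the union of $S_i$ with the finite connected components of $\mathbb{T} \setminus S_i$; equivalently, $D_i \cap R_N$ is $S_i$ together with its descendants in $\mathcal{G}$. Since $\mathbb{T} \setminus R_N$ is infinite and connected, $D_i \subseteq R_N$, and by \Cref{sjexterior} the only element of $\mathscr{S}$ exterior adjacent to $S_i$ is $S_j$, so every vertex of $D_i$ adjacent to $R_N \setminus D_i$ lies in $S_i$ and every such neighbor lies in $S_j$. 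Define $\tau_e(H, H') = (\widehat{H}, \widehat{H}')$ to agree with $(H, H')$ on $R_N \setminus D_i$ and to equal $(H' + F(S_j),\, H - F(S_j))$ on $D_i$.

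The content of the construction is the verification that $\widehat{H}$ and $\widehat{H}'$ are again height functions of tilings of $R_N$ with boundary data $h_N$. The boundary condition is immediate because $\partial R_N$ lies outside $D_i$, and the gradient condition is clear away from the $S_i$--$S_j$ interface, so the only inequalities to check are across that interface. Here one uses the key point that, because $F = H - H'$ with $H, H'$ both height functions, an increment of $F$ equal to $\pm 1$ across an edge pins down the increments of $H$ and of $H'$ separately (e.g.\ if $F$ increases by $1$ when passing from $S_j$ into $S_i$, then $H$ must increase by $1$ and $H'$ must stay constant there); feeding this into the defining formula makes the required inequalities for $\widehat{H}$ and $\widehat{H}'$ hold automatically. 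It is then routine that $\tau_e$ preserves $\mathscr{S}$ (it reflects $F|_{D_i}$ about the level $F(S_j)$, which preserves both the connected components of level sets and the property \eqref{ijgsgs}), that $\tau_e^2 = \mathrm{id}$, and that on the sign vector $\tau_e$ flips precisely the coordinates $X_{e'}$ with $e'$ at or below $e$ in $\mathcal{G}$ (namely $e' = e$ and the edges of the subtree hanging from $S_i$), fixing all others.

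Finally I will run the usual orbit-counting argument. Identifying $\{-1,+1\}^{E(\mathcal{G})}$ with $\mathbb{F}_2^{E(\mathcal{G})}$, the involution $\tau_e$ shifts the sign vector by the indicator $\delta_e$ of $\{e' : e' \text{ at or below } e\}$; since $\mathbf{1}_{\{e\}} = \delta_e \oplus \bigoplus_{e'} \delta_{e'}$ with the sum over the edges immediately below $e$, an induction from the leaves shows that $\{\delta_e\}_{e}$ spans $\mathbb{F}_2^{E(\mathcal{G})}$. Hence for any two sign vectors there is a composition of the $\tau_e$ — a bijection of $\mathcal{P}(\mathscr{S})$ — carrying the pairs with the first sign vector bijectively onto those with the second, so every sign vector is realized by equally many pairs. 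As $(H, H')$ is uniform on $\Omega(R_N)^2$, conditionally on $\mathscr{S}$ it is uniform on $\mathcal{P}(\mathscr{S})$, so $(X_e)_{e}$ is uniform on $\{-1,+1\}^{E(\mathcal{G})}$, and then $\Var\big[F(u) - F(v) \mid \mathscr{S}\big] = \Var\big[\sum_{e \in \gamma} \pm X_e \mid \mathscr{S}\big] = |\gamma| = \dist_{\mathcal{G}}(u,v)$. I expect the main obstacle to be the step in the third paragraph — that $\tau_e$ produces honest tilings — since this is where the structure of $F$ as a difference of two height functions is genuinely used and the shift must be taken to be exactly $F(S_j)$, the level of the outer side of the reflected contour; the combinatorial book-keeping identifying $D_i$ with $S_i$ and its descendants (via \Cref{sjexterior} and simple-connectedness of $R_N$) is a secondary, routine point.
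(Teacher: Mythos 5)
Your proof is correct and follows the same overall strategy as the paper's: reduce the statement to the claim that, conditionally on $\mathscr{S}$, the edge increments $X_e$ of $F$ along the tree $\mathcal{G}$ are i.i.d.\ symmetric $\pm 1$ variables, then compute the variance as a sum along the unique tree path. The difference is in how the conditional-uniformity step is handled: the paper disposes of it in one sentence (``conditional on $\mathscr{S}$, the law of $F$ is uniform on the set of all integer-valued functions on $\mathscr{S}$ satisfying [\ldots]; this description follows from the uniformity of the measure on lozenge tilings''), while you supply an explicit proof via the family of sign-flipping involutions $\tau_e$, which swap $H$ and $H'$ (shifted by $F(S_j)$) on the interior $D_i$ of a level-set contour and then appeal to an orbit-counting argument. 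This is exactly the content that the paper's one-liner is pointing at, so the contribution of your write-up is to make that step rigorous rather than to take a different route; the verification that $\tau_e$ preserves height-function constraints across the $S_i$--$S_j$ interface, and that the shift vectors $\delta_e$ span $\mathbb{F}_2^{E(\mathcal{G})}$, are exactly the details one would need to add. Two small points you correctly flag but should state explicitly if writing this up: (i) every adjacency in $\mathscr{S}$ is in fact a parent--child edge of $\mathcal{G}$ (so there are no ``extra'' constraints linking the $X_e$), and (ii) the identification of $D_i\cap R_N$ with $S_i$ and its $\mathcal{G}$-descendants relies on simple-connectedness of $R_N$ together with \Cref{sjexterior}.
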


	\begin{proof}
		
	Conditional on $\mathscr{S}$, the law of $F$ is uniform on the set of all integer-valued functions on $\mathscr{S}$ satisfying the two conditions:
 \begin{enumerate}
   \item The value of $F$ on the root of $\mathcal{G}_{\mathscr{S}}$ is $0$. (Because $F |_{\partial R_N} = H |_{\partial R_N} - H' |_{\partial R_N} = 0$.)
   \item $\big| F(S_i) - F(S_j) \big| = 1$ holds whenever $S_i$ is adjacent to $S_j$ (by \eqref{ijgsgs}).
  \end{enumerate}
 This description follows from the uniformity of the measure on lozenge tilings that we started from.
  Because $\mathcal{G}_{\mathscr{S}}$ is a tree, we can sample $F$ as follows: Associate with each edge $e$ of $\mathcal{G}$ a Bernoulli random variable $B_e$ such that $\mathbb{P} [B_e = 1] = \frac{1}{2} = \mathbb{E}[B_e = -1]$. Then, letting $S_i \in \mathscr{S}$ denote the level set containing $v$, set $F(v) = F(S_i)$ equal to $\sum_{e \in \mathcal{P}} B_e$, where $\mathcal{P}$ is the unique path of edges from the root of $\mathcal{G}$ to $S_i$. Taking variances in this equality, we deduce the lemma.
	\end{proof}

	In view of \Cref{uvf}, to show \Cref{varianceh2} it suffices to bound $\mathbb{E} \big[ \dist_{\mathcal{G}} (u, v) \big] = o (N)$; let us mention that the bound $\mathbb{E} \big[ \dist_{\mathcal{G}} (u, v) \big] = O(N)$ follows from the (deterministic) fact that $\dist_{\mathcal{G}} (u, v) \le |u - v|$.

	\subsection{Preliminary Results }
	
	\label{Measures}

	In this section we recall several known results from \cite{VPT,MCFRS,CTFMD,ULS} that will assist in the proof of the improved $o(N)$ estimate on $\mathbb{E} \big[ \dist_{\mathcal{G}} (u, v) \big]$. We start from the results concerning the limiting height function and local statistics of random tilings. To that end, for any $x \in \mathbb{R}_{\ge 0}$ and $(s, t) \in \overline{\mathcal{T}}$, set
	\begin{flalign*}
		L(x) = - \displaystyle\int_0^x \log |2 \sin z| dz; \qquad \sigma (s, t) = \displaystyle\frac{1}{\pi} \Big( L(\pi s) + L (\pi t) + L \big( \pi (1 - s - t) \big) \Big).
	\end{flalign*}
	
	\noindent For any $\mathfrak{F} \in \Adm (\mathfrak{R})$, define
	\begin{flalign*}
		\mathcal{E} (\mathfrak{F}) = \displaystyle\int_{\mathfrak{R}} \sigma \big( \nabla \mathfrak{F} (z) \big) dz.
	\end{flalign*}
	
	Letting $\mathfrak{H} \in \Adm (\mathfrak{R}; \mathfrak{h})$ denote the maximizer of $\mathcal{E}$ on $\Adm(\mathfrak{R})$ with boundary data $\mathfrak{h}$, the following result of \cite{VPT} claims that $H$ converges to $\mathfrak{H}$ with high probability.
	
	\begin{lem}[{\cite[Theorem 1.1]{VPT}}]
	
	\label{Lemma_var_principle}
	
		In the setting of Proposition \ref{Proposition_varianceh}, for any $\varepsilon > 0$, there exists a constant $C>1$ such that if $N > C$ then
		\begin{flalign*}
			\mathbb{P} \Big[ \displaystyle\max_{v \in R_N\cap N\mathfrak{R}} \big| \tfrac{1}{N} H_N (v) - \mathfrak{H} (\tfrac{1}{N} v) \big| > \varepsilon \Big] < \varepsilon.
		\end{flalign*}
	\end{lem}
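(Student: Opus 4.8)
The statement is quoted from \cite{VPT}; the following is a sketch of how one proves such a law of large numbers, following the Cohn--Kenyon--Propp paradigm of deducing concentration around the limit shape from a large deviations principle (LDP).

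\emph{Reduction to a large deviations estimate.} Since $\tfrac1N H_N$, restricted to $R_N \cap N\mathfrak{R}$ and rescaled, is $1$-Lipschitz, after the natural identification it is an equicontinuous family of functions on $\mathfrak{R}$; by Arzel\`a--Ascoli the set $\Adm(\mathfrak{R};\mathfrak{h})$ is compact in the sup-norm. The plan is therefore to prove a one-sided LDP upper bound: for every $\mathfrak{F} \in \Adm(\mathfrak{R};\mathfrak{h})$ and every $\delta > 0$,
\[
\mathbb{P}\Big[\big\| \tfrac1N H_N - \mathfrak{F} \big\|_{\infty,\, R_N \cap N\mathfrak R} < \delta \Big] \le \exp\!\big( N^2 (\mathcal{E}(\mathfrak{F}) - \mathcal{E}(\mathfrak{H})) + o_{\delta}(N^2) \big),
\]
and then cover $\{ \mathfrak F : \|\mathfrak F - \mathfrak H\|_\infty \ge \varepsilon \}$ by finitely many $\delta$-balls around profiles $\mathfrak F_1, \dots, \mathfrak F_k$ (with $\delta$ small depending on $\varepsilon$) and take a union bound. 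What makes the union bound conclusive is strict concavity of the surface tension: since $\mathfrak{H}$ is the \emph{unique} maximizer of $\mathcal E$ on $\Adm(\mathfrak R;\mathfrak h)$, one extracts a quantitative entropy gap $\mathcal{E}(\mathfrak{F}) \le \mathcal{E}(\mathfrak{H}) - c(\varepsilon)$ whenever $\|\mathfrak{F} - \mathfrak{H}\|_\infty \ge \varepsilon$, so each term in the union bound is $\exp(-c(\varepsilon) N^2/2)$, which beats $\varepsilon$ for $N$ large.

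\emph{The analytic input and the LDP.} The key fact is the surface-tension asymptotics: the number of lozenge tilings of an $L \times L$ region carrying a prescribed integer slope near $(s,t) \in \mathcal{T}$ is $\exp\!\big(L^2 \sigma(s,t) + o(L^2)\big)$, where $\sigma$ is the entropy per unit area of the (unique) ergodic translation-invariant Gibbs measure of slope $(s,t)$; this is read off from the determinantal/transfer-matrix description of those measures, and $\sigma$ extends continuously to $\overline{\mathcal T}$, is real-analytic and strictly concave on $\mathcal{T}$, and degenerates on $\partial \mathcal{T}$. For the upper bound one tiles $\mathfrak R$ by a fine grid of squares of side $\eta \gg \delta$, conditions on the values of $H_N$ along the rescaled square boundaries, notes that on the event above these boundary data lie within $O(\delta + \eta)$ of an affine function of slope $\nabla \mathfrak F(z)$ in the square around $z$, bounds the number of tilings of that square (with that boundary data) by $\exp((\eta N)^2 \sigma(\nabla \mathfrak F(z)) + o)$, multiplies over the $O(\eta^{-2})$ squares and over the polynomially-many choices of boundary data, and divides by $|\Omega(R_N)|$; sending $N \to \infty$ then $\delta, \eta \to 0$ gives the displayed bound. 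The complementary lower bound $|\Omega(R_N)| \ge \exp(N^2 \mathcal{E}(\mathfrak{H}) + o(N^2))$ requires a \emph{patching} construction: approximate $\mathfrak H$ by a piecewise-linear profile with slopes strictly inside $\mathcal{T}$, realize it on the fine grid via explicit integer boundary heights, count tilings of each square by the surface-tension estimate, glue, and correct in a boundary layer of width $o(N)$ using the hypothesis $(R_N; h_N) \to (\mathfrak{R}; \mathfrak{h})$ together with stability of the maximizer of the variational problem under the (weak) notion of domain convergence of Definition~\ref{Def_domain_convergence}.

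\emph{Main obstacle.} The heavy lifting is in the surface-tension package: establishing the $L^2 \sigma$ asymptotics and the regularity and strict concavity of $\sigma$, since these feed both the per-square counting bounds and the entropy gap. A secondary but genuine difficulty is the boundary-layer patching --- matching the discrete boundary data $h_N$ by tilings whose bulk follows $\mathfrak H$ --- precisely because Definition~\ref{Def_domain_convergence} only controls $(R_N; h_N)$ in an intrinsic-metric, approximate sense rather than pointwise; one handles this by absorbing the mismatch into an $o(N)$-thick annulus whose entropy cost is $o(N^2)$. Once the LDP with the quantitative gap is in place, the sup-norm concentration follows from the compactness-plus-union-bound argument above, in fact with probability $1 - \exp(-c(\varepsilon) N^2)$, far stronger than the stated $\mathbb P[\,\cdot\, > \varepsilon] < \varepsilon$.
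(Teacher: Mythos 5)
The paper offers no proof of this lemma; it is quoted verbatim from Cohn--Kenyon--Propp \cite{VPT}, and your sketch is an accurate high-level reproduction of the CKP argument (Arzel\`a--Ascoli compactness of $1$-Lipschitz profiles, surface-tension asymptotics on small squares feeding an $N^2$-speed LDP upper bound, strict concavity of $\sigma$ giving both uniqueness of the maximizer and a quantitative entropy gap under a finite cover, and boundary-layer patching for the lower bound on $|\Omega(R_N)|$). Since the paper simply cites this result rather than proving it, there is nothing internal to compare against; your sketch correctly identifies the cited proof's structure, including the fact that the weak domain convergence of Definition~\ref{Def_domain_convergence} must be absorbed into an $o(N)$-thick boundary annulus.
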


	We will later use the following lemma from \cite{MCFRS} that provides a deterministic property for the limit shape $\mathfrak{H}$ concerning the discontinuities of its gradient. It essentially states that at any discontinuity of $\nabla \mathfrak{H}$ there is a path to $\partial \mathfrak{R}$, parallel to an axis of $\mathbb{T}$, along which $\nabla \mathfrak{H}$ is mimimal or maximal.
	
	\begin{lem}[{\cite[Theorem 1.3]{MCFRS}}]
		
		\label{nablaf}
		
		For any $z \in \mathfrak{R}$, we either have that $\nabla \mathfrak{H}$ is continuous at $z$ or there exists some $z' \in \partial \mathfrak{R}$, with $z' - z = tw$ for some $t \in \mathbb{R}_{> 0}$ and $w \in \big\{ (1, 0), (0, 1), (1, 1) \big\}$, such that the following holds. If $w \in \big\{ (1, 0), (0, 1) \big\}$, then $\mathfrak{H} (z + sw) = \mathfrak{H} (z)$ for each $s \in [0, t]$. If $w = (1, 1)$, then $\mathfrak{H} (z + sw) = \mathfrak{H} (z) + s$ for each $s \in [0, t]$.
		
	\end{lem}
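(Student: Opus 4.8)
The plan is to combine interior regularity for the variational problem with the explicit degeneracy of the surface tension $\sigma$, and then to propagate the resulting ``creases'' of $\mathfrak{H}$ out to $\partial\mathfrak{R}$. The two analytic facts I would first extract about $\sigma$ are: it is strictly concave on the open triangle $\mathcal{T}$, since $\nabla^2\sigma(s,t)=-\pi\left(\begin{smallmatrix}\cot a+\cot c & \cot c\\ \cot c & \cot b+\cot c\end{smallmatrix}\right)$ with $(a,b,c)=\pi(s,t,1-s-t)$ is negative definite by the triangle identities $\cot a\cot b+\cot b\cot c+\cot c\cot a=1$ and $\cot a+\cot c=\tfrac{\sin b}{\sin a\sin c}>0$; whereas on each of the three edges of $\overline{\mathcal{T}}$ — where one of $s,t,1-s-t$ vanishes — it is identically zero, hence affine, because $L(x)+L(\pi-x)\equiv 0$. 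Consequently, wherever $\nabla\mathfrak{H}$ stays in a compact subset of $\mathcal{T}$ the Euler--Lagrange equation $\operatorname{div}\big(\nabla\sigma(\nabla\mathfrak{H})\big)=0$ is uniformly elliptic, and standard elliptic regularity (De Giorgi--Nash--Moser plus Schauder bootstrapping, and analyticity of the nonlinear equation) makes $\nabla\mathfrak{H}$ continuous (indeed real-analytic) on the liquid region $\mathcal{L}=\{z\in\mathfrak{R}:\nabla\mathfrak{H}(z)\in\mathcal{T}\}$. So if $\nabla\mathfrak{H}$ is discontinuous at $z$ then $z\notin\mathcal{L}$, and in a neighborhood of $z$ the gradient takes values on (or converging to) the degeneracy locus $\partial\overline{\mathcal{T}}$.

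The next step is to read off the geometry of such a discontinuity. Let $\Gamma\ni z$ be the discontinuity locus; on its two sides $\nabla\mathfrak{H}$ has limiting values $p_-\ne p_+$, and these must lie on a common edge $e$ of $\overline{\mathcal{T}}$ — otherwise the segment $[p_-,p_+]$ would meet the interior $\mathcal{T}$, and a mollification of the crease (in which $\nabla\mathfrak{H}$ interpolates along $[p_-,p_+]$ on a thin strip) would give an admissible function with strictly larger $\mathcal{E}$, since $\sigma>0$ on $\mathcal{T}$ while $\sigma=0$ at $p_\pm$, contradicting maximality of $\mathfrak{H}$ (\Cref{Lemma_var_principle}). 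Because $\mathfrak{H}$ is single-valued, its derivative along $\Gamma$ must agree when computed from either side: $(p_--p_+)\cdot\tau=0$ for the tangent $\tau$ of $\Gamma$. Since $p_--p_+$ is parallel to $(1,0)$, $(0,1)$, or $(1,-1)$ according as $e$ is the edge $\{t=0\}$, $\{s=0\}$, or $\{s+t=1\}$, the tangent $\tau$ must be parallel to $w=(0,1)$, $(1,0)$, or $(1,1)$ respectively; in particular $\Gamma$ is a straight segment in one of the three directions $w\in\{(1,0),(0,1),(1,1)\}$, and along it $\partial_w\mathfrak{H}=p_\pm\cdot w$, which equals $0$ when $w\in\{(1,0),(0,1)\}$ (that edge contains the origin of slope space) and $1$ when $w=(1,1)$. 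Thus $\mathfrak{H}$ is $w$-extremal along the crease, exactly as in the statement. (Points of the discontinuity locus where several frozen regions meet, or where a frozen facet abuts the liquid region, reduce to this picture by examining any adjoining semi-frozen region.)

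It remains to show the $w$-extremal segment can be extended in the direction $w$ until it reaches $\partial\mathfrak{R}$; this is the step I expect to be the main obstacle. Let $\ell\ni z$ be the maximal segment parallel to $w$ on which $\mathfrak{H}$ is $w$-extremal (equivalently $\nabla\mathfrak{H}\in e$). If an endpoint $z_1$ of $\ell$ were interior to $\mathfrak{R}$, then by maximality the slope leaves $e$ just past $z_1$, so on a small disk about $z_1$ there is a positive-measure set on which $\sigma(\nabla\mathfrak{H})>0$ bordering the frozen sliver $\ell$. One should then be able to build a competitor $\mathfrak{H}'\in\Adm(\mathfrak{R};\mathfrak{h})$ that coincides with $\mathfrak{H}$ off this disk (hence never feels the boundary datum $\mathfrak{h}$), remains $1$-Lipschitz with $\nabla\mathfrak{H}'\in\overline{\mathcal{T}}$, and replaces a thin piece of the frozen region near $z_1$ by slopes in the interior of $\overline{\mathcal{T}}$; since $\sigma$ is strictly positive there and vanishes on $e$, this gives $\mathcal{E}(\mathfrak{H}')>\mathcal{E}(\mathfrak{H})$, contradicting maximality. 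Hence both endpoints of $\ell$ lie on $\partial\mathfrak{R}$, and taking $z'$ to be the one reached from $z$ in the direction $+w$ rather than $-w$ completes the proof. Carrying out this competitor construction rigorously — keeping the gradient and Lipschitz constraints while genuinely gaining entropy, patching back to $\mathfrak{H}$ without violating $\nabla\mathfrak{H}'\in\overline{\mathcal{T}}$ near the degenerate slopes, and separating the three orientations of $e$ and the cases where $z_1$ abuts the liquid region versus another frozen region — is the technical heart of the argument; it is essentially the assertion that the ruling segments of this degenerate variational problem behave like characteristics, which cannot terminate in the interior of $\mathfrak{R}$.
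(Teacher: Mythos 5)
This lemma is not proved in the paper: it is imported wholesale from De Silva--Savin \cite[Theorem 1.3]{MCFRS}, so there is no internal argument to compare yours against. What can be judged is whether your sketch constitutes a self-contained proof. Your analytic preliminaries are correct: the Hessian of $\sigma$, its negative-definiteness on $\mathcal{T}$ via the cotangent identity, and the vanishing (hence affineness) of $\sigma$ on each edge of $\overline{\mathcal{T}}$ all check out; and the deduction of the crease direction from the requirement that the tangential derivatives match across a jump is exactly the right mechanism linking the edge of $\overline{\mathcal{T}}$ that the slope hits to the lattice direction $w$ along which $\mathfrak{H}$ is extremal.

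However, two of the steps you treat as inputs are in fact the content of \cite{MCFRS}, and as written they are genuine gaps. First, you introduce ``the discontinuity locus $\Gamma \ni z$'' and assume from the outset that it is a curve with two sides along which $\nabla\mathfrak{H}$ has one-sided limits $p_{\pm}$. For a general Lipschitz maximizer nothing of the sort is known a priori --- the set where $\nabla\mathfrak{H}$ fails to exist or be continuous could be far wilder than a curve --- and the partial regularity needed to even formulate the problem in terms of $p_{\pm}$ is itself a theorem, not a starting point. Second, the propagation of the extremal segment all the way to $\partial\mathfrak{R}$, which you correctly flag as the main obstacle, is left as an ansatz. The proposed competitor (``unfreeze a thin piece near $z_1$'') must be $1$-Lipschitz, keep its gradient in $\overline{\mathcal{T}}$, coincide with $\mathfrak{H}$ outside a small disk, and still strictly increase $\mathcal{E}$; none of this is automatic, because the pointwise gradient constraint makes local modifications propagate nonlocally and near a facet the frozen set is typically two-dimensional rather than a one-dimensional ``sliver.'' It is not evident that any admissible perturbation of the type you describe exists, let alone one with a net entropy gain. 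The proof in \cite{MCFRS} is a substantial piece of PDE analysis of this degenerate gradient-constrained variational problem and does not reduce to a local perturbation argument of this kind. Your roadmap has the right landmarks, but the route between them --- precisely the two places you gesture past --- is where the theorem actually lives.
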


	Next, although in this paper we are primarily interested in uniformly random lozenge tilings of finite domains, it will be useful to state several properties about certain infinite-volume measures on tilings of $\mathbb{T}$ that will arise as limit points of local statistics for our model. Therefore, let us briefly recall a two-parameter family of such measures from \cite{LSLD,okounkov_reshetikhin_3d,R} (see also \cite[Section 5]{OD} and \cite[Lecture 13]{Gorin_book}). To that end, let $\mathfrak{P} (R)$ denote the space of probability measures on the set $\Omega (R)$ of all lozenge tilings of some domain $R \subseteq \mathbb{T}$. We say that $\mu \in \mathfrak{P} (\mathbb{T})$ is a \emph{Gibbs measure} if it satisfies the following property for any finite domain $R \subset \mathbb{T}$. The probability under $\mu$ of selecting any $\mathscr{M} \in \Omega (\mathbb{T})$, conditional on the restriction of $\mathscr{M}$ to $\mathbb{T} \setminus R$, is uniform.
	
	Moreover, for any $w \in \mathbb{Z}^2$, define the translation map $\mathfrak{S}_w: \mathbb{Z}^2 \rightarrow \mathbb{Z}^2$ by setting $\mathfrak{S}_w (v) = v - w$ for any $v \in \mathbb{Z}^2$. Then $\mathfrak{S}_w$ induces an operator on $\mathfrak{P} (\mathbb{T})$ that translates a tiling by $-w$; we also refer it to by $\mathfrak{S}_w$. A measure $\mu \in \mathfrak{P} (\mathbb{T})$ is called \emph{translation-invariant} if $\mathfrak{S}_w \mu = \mu$, for any $w \in \mathbb{Z}^2$.  We further call a Gibbs translation-invariant measure $\mu \in \mathfrak{P} (R)$ \emph{ergodic} if, for any $p \in (0, 1)$ and Gibbs translation-invariant measures $\mu_1, \mu_2 \in \mathfrak{P} (R)$ such that $\mu = p \mu_1 + (1 - p) \mu_2$, we have $\mu_1 = \mu = \mu_2$.
	
	Let $\mu \in \mathfrak{P} (\mathbb{T})$ be a translation-invariant measure, and let $H_{\mu}$ denote the height function associated with a randomly sampled lozenge tiling under $\mu$, such that $H_{\mu} (0, 0) = 0$. Setting $s = \mathbb{E} \big[ H_{\mu} (1, 0) - H_{\mu} (0, 0) \big]$ and $t = \mathbb{E} \big[ H_{\mu} (0, 1) - H_{\mu} (0, 0) \big]$, the \emph{slope} of $\mu$ is defined to be the pair $(s, t)$. We must then have $(s, t) \in \overline{\mathcal{T}}$, since $s$, $t$, and $1 - s - t$ denote the probabilities of a given lozenge in a random tiling (under $\mu$) being of types $1$, $2$, and $3$, respectively.
	
	It was shown in \cite[Theorem 9.1.1]{R} that, for any $(s, t) \in \overline{\mathcal{T}}$, there exists a unique ergodic Gibbs translation-invariant measure $\mu = \mu_{s, t} \in \mathfrak{P} (\mathbb{T})$ of slope $(s, t)$. We then have the following two results concerning this measure; the first one bounds the variance of the height function under any such $\mu_{s, t}$, and the second one shows convergence of local statistics of $H$ to $\mu_{s, t}$.
	
	In the below, we let $\mathscr{M}$ denote the law of the tiling of $R_N$ associated with $H$ and, for any $v \in \mathbb{T}$ and $D \in \mathbb{Z}_{\ge 0}$, we let $\mathcal{B}_D (v) = \big\{ u \in \mathbb{T} : |u - v| \le D \}$ denote the disk centered at $v$ with radius $D$. By $\mathscr{M} |_{\mathcal{B}_D (v)}$ we mean the law of the tiling inside the disk $\mathcal{B}_D (v)$.
Moreover, for any $\varepsilon > 0$, we let $\mathcal{T}_{\varepsilon} = \big\{ (s, t) \in \mathbb{R}_{> 0}^2 : \varepsilon < s + t < 1 - \varepsilon \big\}$, which describes the set of slopes that are ``uniformly liquid.'' Additionally, for any measures $\mu$ and $\nu$ on a probability space $\Omega$ with $\sigma$--algebra $\mathcal A$, we let
$$
		d_{\TV} (\mu, \nu) = \displaystyle\max_{A \in \mathcal A} \big| \mu (A) - \nu (A) \big|
$$
 denote the total variation distance between $\mu$ and $\nu$.

	\begin{lem}[{\cite[Proposition 4.1]{CTFMD}}]
		
		\label{hmoment}
		
		There exists a constant $C > 1$ such that the following holds. For any $(s, t) \in \mathcal{T}$ and $u, v \in \mathbb{T}$ we have under $\mu = \mu_{s, t}$ that $\Var \big( H_{\mu} (u) - H_{\mu} (v) \big) \le C \log |u - v|$.
		
	\end{lem}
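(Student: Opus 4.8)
The plan is to express the height increment between $u$ and $v$ as a signed counting statistic of a discrete sine point process, and then to compute its variance essentially exactly, exploiting that the discrete sine kernel is an orthogonal projection; the logarithmic growth will come out of the cancellation built into that exact formula.

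\textbf{Step 1: reduction to a single lattice line.} Given $u=(x_1,y_1)$ and $v=(x_2,y_2)$, join them by the two-segment lattice path $v\to (x_2,y_1)\to u$, whose first segment is parallel to the $y$-axis of $\mathbb{T}$ and whose second is parallel to the $x$-axis; since $|u-v|^2=(\Delta x)^2+(\Delta y)^2-\Delta x\,\Delta y\ge \tfrac12\big((\Delta x)^2+(\Delta y)^2\big)$, both segments have Euclidean length at most $C|u-v|$. Writing $H_\mu(u)-H_\mu(v)=X+Y$ for the increments of $H_\mu$ along the two segments, the triangle inequality for the $L^2$-norm gives $\Var\big(H_\mu(u)-H_\mu(v)\big)^{1/2}\le \Var(X)^{1/2}+\Var(Y)^{1/2}$, so it suffices to bound $\Var(X)$ and $\Var(Y)$ separately by $C\log|u-v|$.

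\textbf{Step 2: identification of the marginal process.} Along a segment parallel to a fixed lattice direction, the increment of $H_\mu$ equals, up to a global sign, the number of lozenges of one prescribed type that straddle that segment (type $2$ for the vertical segment, type $1$ for the horizontal one); these straddling lozenges form a point process on $\mathbb{Z}$, with density $\E\big[H_\mu(0,1)-H_\mu(0,0)\big]=t$, respectively $\E\big[H_\mu(1,0)-H_\mu(0,0)\big]=s$. By the Okounkov--Reshetikhin description of the ergodic Gibbs measures $\mu_{s,t}$ (\cite{okounkov_reshetikhin_3d,R}; see also \cite[Lecture 13]{Gorin_book}), this one-dimensional marginal is a discrete sine process whose correlation kernel is $K_\rho(i,j)=\frac{\sin(\pi\rho(i-j))}{\pi(i-j)}$ for $i\ne j$ and $K_\rho(i,i)=\rho$, with $\rho\in\{s,t\}$; the hypothesis $(s,t)\in\mathcal{T}$ (the open triangle) is exactly what forces $\rho\in(0,1)$. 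Hence $\Var(X)=\Var(N_A)$, where $N_A$ is the number of points of such a discrete sine process in an interval $A\subset\mathbb{Z}$ with $|A|\le C|u-v|$.

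\textbf{Step 3: exact variance via the projection property, and conclusion.} As an operator on $\ell^2(\mathbb{Z})$, $K_\rho$ is the Fourier multiplier by $\mathbf{1}_{[-\pi\rho,\pi\rho]}$, hence an orthogonal projection: $K_\rho^2=K_\rho=K_\rho^{*}$. For a determinantal point process with such a kernel one has the classical identity
\[
\Var(N_A)=\operatorname{tr}\big(\mathbf{1}_A K_\rho \mathbf{1}_A\big)-\operatorname{tr}\big(\mathbf{1}_A K_\rho \mathbf{1}_A K_\rho \mathbf{1}_A\big)=\operatorname{tr}\big(\mathbf{1}_A K_\rho \mathbf{1}_{A^c} K_\rho \mathbf{1}_A\big)=\sum_{i\in A}\sum_{j\notin A}\big|K_\rho(i,j)\big|^2 .
\]
Since $|K_\rho(i,j)|\le \tfrac{1}{\pi|i-j|}$ uniformly in $\rho$, an elementary summation over $i\in A$, $j\notin A$ (using $\sum_{m\ge n}m^{-2}\le \tfrac{1}{n-1}$) gives $\Var(N_A)\le C\big(1+\log|A|\big)$ with $C$ independent of $\rho$. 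Feeding this into Step 1 yields $\Var\big(H_\mu(u)-H_\mu(v)\big)\le C\big(1+\log|u-v|\big)$, which is the asserted bound once $|u-v|$ is bounded away from $1$; for $|u-v|\le 2$ the variance is trivially at most an absolute constant, since each single-step increment of $H_\mu$ takes values in $\{0,1\}$ or $\{-1,0\}$.

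\textbf{Expected main obstacle.} The crux is Step 2: one needs not merely that $\mu_{s,t}$ has determinantal correlations, but the precise fact that its marginal on each lattice line is a \emph{genuine} discrete sine process with a \emph{projection} kernel. This is exactly the input that produces the cancellation upgrading the naive estimate $\Var(N_A)=O(|A|)$ — which is all one gets from the pointwise covariance decay $|\mathrm{Cov}(\mathbf{1}_i,\mathbf{1}_j)|\le C|i-j|^{-2}$ alone — to the logarithmic one, and it is also what delivers uniformity of $C$ over all of $\mathcal{T}$. The remaining ingredients (the two-segment reduction, the bookkeeping relating height increments to straddling lozenges, and the sum $\sum_{i\in A,\,j\notin A}|i-j|^{-2}\le C(1+\log|A|)$) are routine.
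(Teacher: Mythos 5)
Your proposal is correct and follows essentially the same route as the paper's source: the paper defers to Bufetov's discrete sine process estimate \cite[Proposition 4.1]{CTFMD} after observing (with pointers to \cite{okounkov_reshetikhin_3d} and \cite[Lecture 13]{Gorin_book}) that the one-dimensional marginal of $\mu_{s,t}$ along a lattice line is the discrete sine process, and your Steps 2--3 reconstruct exactly that input and that estimate, using the projection property $K_\rho^2=K_\rho$ to turn the variance of the counting statistic into $\sum_{i\in A,\,j\notin A}|K_\rho(i,j)|^2=O(\log|A|)$ uniformly in $\rho\in(0,1)$. The two-segment reduction of Step 1 and the trivial treatment of $|u-v|\le 2$ are the minor bookkeeping the paper leaves implicit, and you supply them correctly.
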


	Let us mention that a logarithmic variance bound as in \Cref{hmoment} was also established as Theorem 4.5 of \cite{DA}, but there the uniformity in the slope $(s, t)$ was not mentioned. \cite{CTFMD} did not mention lozenge tilings explicitly, but the discrete sine process studied there is another face of the same object, see \cite[Section 3.1.1]{okounkov_reshetikhin_3d} or \cite[Section 13.4]{Gorin_book}.

	\begin{lem}[{\cite{ULS}}]
		
		\label{convergemu}
		
		In the setting of Proposition \ref{Proposition_varianceh}, for any real numbers $\varepsilon, \omega > 0$ and $D > 1$, there exists a constant $C> 1$ such that the following holds for any $N > C$. Let $v \in R_N$ be a vertex such that $\mathfrak{v} = \tfrac{1}{N} v \in \mathfrak{R}$ satisfies $\dist (\mathfrak{v}, \partial \mathfrak{R}) \ge \varepsilon$ and $(s, t) = \nabla \mathfrak{H} (\mathfrak{v}) \in \mathcal{T}_{\varepsilon}$. Then, $d_{\TV} \big( \mathscr{M} |_{\mathcal{B}_D (v)}, \mu_{s, t} |_{\mathcal{B}_D (v)} \big) < \omega$.
		
	\end{lem}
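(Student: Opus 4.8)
The plan is to prove this in two stages: first establish the bulk local limit for a tractable family of ``trapezoidal'' domains, where an explicit double contour integral formula for the correlation kernel is available, and then transfer the conclusion to the general domain $R_N$ using the law of large numbers (Lemma~\ref{Lemma_var_principle}) together with monotonicity of the height function in its boundary data.

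For the first stage I would take the domains bounded on the left by a vertical segment, above and below by segments inclined at $120^\circ$, and on the right by a ``sawtooth'' profile fixing the positions of the horizontal lozenges along the rightmost vertical line. Tilings of such a domain are in bijection with interlacing integer arrays (Gelfand--Tsetlin patterns with a prescribed boundary row), which form a determinantal point process whose correlation kernel admits a double contour integral representation, as in the setting analyzed in \cite{GP,Novak}. A steepest--descent analysis of this kernel, uniform over slopes $(s,t)\in\mathcal{T}_{\varepsilon}$ and over points at macroscopic distance at least $\varepsilon$ from the boundary, should show that near such a point with local slope $(s,t)$ the rescaled kernel converges to the incomplete--beta (discrete sine) kernel, which is precisely the correlation kernel of $\mu_{s,t}$. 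Since a tiling inside $\mathcal{B}_D(v)$ is determined by the finitely many particles it contains and a determinantal measure is determined by its kernel, this yields $d_{\TV}\big(\mathscr{M}|_{\mathcal{B}_D(v)},\mu_{s,t}|_{\mathcal{B}_D(v)}\big)\to 0$ for trapezoidal domains, uniformly in the indicated range of $v$.

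For the second stage, fix $v\in R_N$ with $\mathfrak{v}=\tfrac1N v$, $\dist(\mathfrak{v},\partial\mathfrak{R})\ge\varepsilon$, and $\nabla\mathfrak{H}(\mathfrak{v})=(s,t)\in\mathcal{T}_{\varepsilon}$. Since all three coordinates of $(s,t)$ are bounded away from $0$ and $1$, Lemma~\ref{nablaf} forbids a frozen path through $\mathfrak{v}$, so $\nabla\mathfrak{H}$ is continuous at $\mathfrak{v}$; in particular $\mathfrak{H}$ differs by $o(\delta)$ from the affine map $z\mapsto\mathfrak{H}(\mathfrak{v})+(s,t)\cdot(z-\mathfrak{v})$ on the ball of radius $\delta$ around $\mathfrak{v}$. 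I would fix a mesoscopic scale $1\ll L=L(N)\ll N$, let $Q$ be the box of side $L$ centered at $v$, and invoke Lemma~\ref{Lemma_var_principle} to conclude that, with probability tending to $1$, the boundary height function induced on $\partial Q$ by the random tiling lies within $o(L)$ of an exact affine profile of slope $(s,t)$. Conditioning on the tiling of $R_N\setminus Q$, the law inside $Q$ is uniform with that random boundary data, which one sandwiches between two deterministic boundary height functions realizable by trapezoidal domains, differing from the same affine profile by $o(L)$, with limit shapes of gradient $(s,t)$ at $v$. Height--function monotonicity gives a coupling of the three uniform tilings in which the height functions are ordered; combined with the first stage and the uniqueness \cite[Theorem~9.1.1]{R} of the ergodic Gibbs measure of slope $(s,t)$, this should give $d_{\TV}\big(\mathscr{M}|_{\mathcal{B}_D(v)},\mu_{s,t}|_{\mathcal{B}_D(v)}\big)<\omega$, with uniformity in $v$ since every estimate above is uniform over $\varepsilon$--deep points of slope in $\mathcal{T}_{\varepsilon}$.

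The hard part will be the transfer in the second stage. The monotone coupling only orders the \emph{height functions} of the three tilings, and near $v$ these may differ by $o(L)$ rather than by $O(1)$, so it does not by itself pin down the lozenge configuration in $\mathcal{B}_D(v)$. To close this gap one must extract that every subsequential local limit around $v$ is a translation--invariant Gibbs measure of slope exactly $(s,t)$ --- using the domain Markov property of the uniform measure together with the slope--level consequences of Lemma~\ref{Lemma_var_principle} --- and then \emph{identify} this limit with $\mu_{s,t}$ rather than with a non--ergodic mixture of ergodic Gibbs measures of the same average slope; it is precisely for this identification that the explicit kernel asymptotics of the first stage are needed. A secondary difficulty is performing the saddle--point analysis of the trapezoidal kernel with uniformity over $(s,t)\in\mathcal{T}_{\varepsilon}$ and over the bulk, and arranging the sandwiching boundary data to be simultaneously admissible, trapezoid--realizable, comparable to the genuinely random boundary data with high probability, and of the correct local slope.
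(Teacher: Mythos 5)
Your proposal attempts to re-derive the main theorem of \cite{ULS} from scratch, whereas the lemma is attributed to \cite{ULS} and the paper's own contribution in the appendix is far more modest: it only supplies the argument for why the constant $C$ can be chosen \emph{uniformly} in the vertex $v$. The paper's argument is a localization: it cites the disk case of \cite{ULS} (Theorem 3.15 there, which already has uniformity over vertices inside a disk), conditions on the restriction of the tiling to $R_N \setminus \mathcal{B}_{\rho N}(v)$ for a small fixed $\rho$, uses the variational principle (Lemma~\ref{Lemma_var_principle}) to show that the random boundary data induced on the subdisk concentrates near the global maximizer $\mathfrak{H}$, invokes a deterministic gradient-stability estimate (Proposition 2.13 of \cite{ULS}) to show the corresponding interior variational maximizer $\widetilde{\mathcal{H}}$ remains uniformly liquid near $\mathfrak{v}$, and then applies the disk result; a final TV comparison replaces $\mu_{\tilde s,\tilde t}$ by $\mu_{s,t}$. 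This is an order of magnitude shorter than your plan and relies crucially on the uniformity already built into \cite{ULS}.

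As for the internal soundness of your proposal, the main gap is in Stage 2, and you have partly flagged it yourself. Monotone coupling gives $o(L)$ agreement of height functions near $v$, which does not determine the lozenge configuration inside $\mathcal{B}_D(v)$. Your proposed repair, namely (a) show that every subsequential local limit around $v$ is a \emph{translation-invariant} Gibbs measure of slope $(s,t)$ ``using the domain Markov property together with the slope-level consequences of Lemma~\ref{Lemma_var_principle},'' and then (b) identify it with $\mu_{s,t}$ via the trapezoidal kernel asymptotics, does not work as stated. The domain Markov property yields only that any subsequential local limit is Gibbs; it does \emph{not} yield translation-invariance, and establishing translation-invariance (and ergodicity) of subsequential local limits of uniform tilings of finite, non-periodic domains is precisely the hard step that \cite{ULS} resolves by effective comparisons rather than by soft ergodic-theoretic arguments. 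Moreover, even granting (a), step (b) is a non-sequitur: the explicit kernel asymptotics tell you the \emph{trapezoidal} tilings converge locally to $\mu_{s,t}$, but they do not by themselves tell you that an abstract translation-invariant Gibbs limit for $R_N$ coincides with $\mu_{s,t}$ rather than a mixture --- that identification must again come from the sandwiching, and the sandwiching only controls height functions, not local laws. A secondary unaddressed issue is realizability of the sandwich: trapezoidal boundary profiles are rigid (affine on three sides, with dents only on the fourth), and it is not clear that an arbitrary $o(L)$-approximately-affine boundary height function on $\partial Q$ can be bracketed between two trapezoid-realizable profiles whose limit shapes both have gradient $(s,t)$ at $v$.
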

	
	\begin{remark}
		
		\label{convergev}
		
		The uniformity in $v$ (and $\mathfrak{v}$) of the constant $C$ stated in \Cref{convergemu} was not directly claimed in (Theorem 1.5 of) \cite{ULS} but follows from its proof. We outline this in \Cref{ProofConverge} below.
		
	\end{remark}

	\subsection{Proof of \Cref{varianceh2}}
	
	\label{EstimateProof}
	
	In this section we establish \Cref{varianceh2}. We begin with the following lemma that bounds $\dist_{\mathcal{G}} (u, v) = o \big( |u - v| \big)$ when $u - v$ are nearby.

	\begin{lem}
		
	\label{distancegestimate}

	There exists a constant $C_1>1$ such that for any real numbers $\varepsilon \in (0, 1)$ and $D > 2$, there is another constant $C_2>1$ (depending on all the data) and the following holds for any $N \ge C_2$: Fix $u, v \in R_N$ with $|u - v| \le D$ satisfying $\dist (\tfrac{1}{N} v, \partial \mathfrak{R}) \ge \varepsilon$ and $\nabla \mathfrak{H} (\tfrac{1}{N} u) \in \mathcal{T}_{\varepsilon}$. Then
	\begin{flalign*}
	\mathbb{P} \big[ \dist_{\mathcal{G}} (u, v) > C_1 (\log D)^2 \big] \le (\log D)^{-1}.
	\end{flalign*}
	
	\end{lem}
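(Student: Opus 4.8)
The plan is to bound $\dist_{\mathcal{G}}(u,v)$ by controlling how many distinct level sets of $F = H - H'$ a shortest lattice path between $u$ and $v$ can pass through, and then upgrading this to a bound on the tree distance by using the structure of the distance graph $\mathcal{G}$. The key point is that $\dist_{\mathcal{G}}(u,v)$ counts the number of ``ancestry changes'' along a path from $u$ to $v$, which in turn is controlled by the number of distinct values of $F$ seen near $u$ and $v$ together with the local geometry of the level sets. Since $F(v) - F(u)$ has conditional variance equal to $\dist_{\mathcal{G}}(u,v)$ by Lemma~\ref{uvf}, and since we want a bound of order $(\log D)^2$ with failure probability $(\log D)^{-1}$, it suffices (by a union bound and Markov/Chebyshev) to show that, with overwhelming probability, the joint local statistics of $(H, H')$ in a disk $\mathcal{B}_{C D}(v)$ around $v$ are close to those of a pair of independent ergodic Gibbs measures $\mu_{s,t} \times \mu_{s,t}$, where $(s,t) = \nabla \mathfrak{H}(\tfrac{1}{N}v)$.

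First I would fix the constant $C_1$ and then, given $\varepsilon$ and $D$, invoke Lemma~\ref{convergemu} (with radius parameter of order $D$, say $2D$, and a small enough $\omega$) to both $H$ and $H'$ simultaneously. Because $H$ and $H'$ are independent, the joint law of the tiling pair restricted to $\mathcal{B}_{2D}(v)$ is within total variation $2\omega$ of $\mu_{s,t} \times \mu_{s,t}$ restricted to the same disk. Under the product measure, $F = H_\mu - H'_\mu$ restricted to the disk has the property that $\Var\big(F(u') - F(v')\big) = \Var\big(H_\mu(u') - H_\mu(v')\big) + \Var\big(H'_\mu(u') - H'_\mu(v')\big) \le 2C \log|u' - v'| \le 2C \log(4D)$ for any $u', v'$ in the disk, by Lemma~\ref{hmoment}. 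In particular, the number of distinct values $F$ takes on the disk is, with high probability, at most $O(\log D)$: indeed, the range of $F$ on the disk is tight on scale $\sqrt{\log D}$ by the variance bound and a standard maximal inequality (which follows from the logarithmic variance and the Gibbs/Markov structure, or one can simply extract a crude bound on the number of level-set values via Chebyshev).

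Next I would translate the bound on $F$ back into a bound on $\dist_{\mathcal{G}}(u,v)$. Every edge of the path in $\mathcal{G}$ from $S_i \ni v$ to $S_j \ni u$ corresponds either to an ``exterior'' step (moving to a level set surrounding the current one) or to an ``interior'' step. Since adjacent level sets in $\mathcal{G}$ have $F$-values differing by exactly $1$ (by \eqref{ijgsgs}), any path in $\mathcal{G}$ of length $\ell$ witnesses $F$-values spanning an interval, but more importantly the monotone segments of the path correspond to nested sequences of level sets, all contained in a bounded neighborhood of $\{u, v\}$ once we know (deterministically) that $\dist_{\mathcal{G}}(u,v) \le |u-v| \le D$, so the relevant level sets all lie in $\mathcal{B}_{2D}(v)$. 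The number of \emph{distinct} level sets encountered is then at most the number of connected components of level sets of $F$ inside $\mathcal{B}_{2D}(v)$, which is controlled by the number of distinct values (at most $O(\log D)$ with high probability) times the number of components per value. Here one needs the additional input that, on an event of probability $1 - \omega'$, the level sets of $F$ inside the disk are not too fragmented — this again follows from comparison with the product Gibbs measure together with the fact that $\mu_{s,t}$ is a ``liquid'' measure with no frozen regions, so the level sets of $F$ on scale $D$ have at most polylogarithmically many components; combining these, $\dist_{\mathcal{G}}(u,v) \le C_1 (\log D)^2$ off an event of probability $O(\omega + \omega') \le (\log D)^{-1}$ after choosing $\omega, \omega'$ appropriately.

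The main obstacle I anticipate is the passage from ``$F$ takes few values and has tight range on the disk'' to ``$\dist_{\mathcal{G}}(u,v)$ is small'': a priori the level sets could wind around in a complicated way, so that even with $O(\log D)$ distinct values the tree $\mathcal{G}$ restricted to the disk could be deep. The resolution should be that a path in $\mathcal{G}$ is forced to be (essentially) monotone in $|F|$ along exterior edges — climbing out through nested surrounding annuli — so the number of exterior edges is bounded by the range of $F$, namely $O(\sqrt{\log D})$ with high probability; the interior edges within a single ``generation'' are then bounded by how many disjoint level-set components of a fixed value can be nested, which is again $O(\log D)$. Making this counting argument precise — in particular ruling out pathological configurations of level sets using only the total-variation comparison to the product Gibbs measure and the logarithmic variance bound of Lemma~\ref{hmoment} — is where the real work lies, and is presumably what the authors carry out in the remainder of this subsection.
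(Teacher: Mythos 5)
You have assembled the right ingredients (Lemmas \ref{uvf}, \ref{hmoment}, \ref{convergemu}, and the independence of $H$ and $H'$), but you then take a detour into a structural analysis of the tree $\mathcal{G}$ — counting distinct $F$-values on a disk, controlling fragmentation of level sets, nested annuli, and so on — which you yourself concede you cannot make precise. That detour is unnecessary, and the fact that you could not close it is the gap in your proposal. The missing observation is that Lemma \ref{uvf} can be used in expectation, not just conditionally: from the proof of Lemma \ref{uvf}, $F(u) - F(v)$ given $\mathscr{S}$ is a sum of symmetric $\pm 1$ increments along the tree path, so its conditional mean is zero. Hence taking expectations in Lemma \ref{uvf} yields the exact identity
\begin{flalign*}
\mathbb{E}\big[ \dist_{\mathcal{G}}(u,v) \big] = \mathbb{E}\Big[ \Var\big[ F(u) - F(v) \,\big|\, \mathscr{S} \big] \Big] = \Var\big[ F(u) - F(v) \big],
\end{flalign*}
with no total-variance cross term. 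No level-set geometry enters at all.

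From there the argument is short. By independence, $\Var[F(u)-F(v)] = \Var[H(u)-H(v)] + \Var[H'(u)-H'(v)]$. Choosing $\omega = D^{-1}$ in Lemma \ref{convergemu}, the coupling to $\mu_{s,t}$ on $\mathcal{B}_D(v)$ succeeds except on an event of probability $\omega$, and off that coupling event the $1$-Lipschitz property bounds $|H(u)-H(v)|$ by $D$; together with Lemma \ref{hmoment} this gives $\Var[H(u)-H(v)] \le C \log D + 2\omega D = C\log D + 2$, and similarly for $H'$. Thus $\mathbb{E}[\dist_{\mathcal{G}}(u,v)] \le 2C\log D + 2$, and Markov's inequality with $C_1 = 2C+2$ gives $\mathbb{P}[\dist_{\mathcal{G}}(u,v) > C_1 (\log D)^2] \le (\log D)^{-1}$. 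Your plan to instead bound $\dist_{\mathcal{G}}(u,v)$ pathwise via the number of level-set components crossed is both harder and not obviously true with the stated bounds (a logarithmic variance for $F$ does not by itself control the number of components of a fixed level), which is precisely why you got stuck; the paper avoids that entirely by bounding the expectation rather than the random variable.
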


	\begin{proof}
		
		Let $\omega =D^{-1}$ and set $(s, t) = \nabla \mathcal{H} (\tfrac{1}{N} u)$. Letting $\mathscr{M}$ and $\mathscr{M}'$ denote the tilings of $R_N$ associated with $H$ and $H'$, respectively, \Cref{convergemu} implies that $\mathscr{M}$ and $\mathscr{M}'$ can be coupled with $\mu_{s, t}$ such that they all coincide on $\mathcal{B}_D (v)$ with probability at least $1 - \omega$ for sufficiently large $N$. Hence, using the fact that $H$ is $1$-Lipschitz, \Cref{hmoment} implies the existence of a constant $C > 1$ such that, for any fixed $D > 2$ and sufficiently large $N$, we have
		\begin{flalign*}
			\Var \big[ H(u) - H(v) \big] < C \log D + 2\omega D, \quad \text{and} \quad \Var \big[ H' (u) - H' (v) \big] < C \log  D + 2 \omega D,
		\end{flalign*}
		
		\noindent whenever $|u - v| \le D$. Adding these bounds, using the independence of $H$ and $H'$, and $w=D^{-1}$, we obtain
		\begin{flalign*}
			\mathbb{E} \big[ \dist_{\mathcal{G}} (u, v) \big] = \Var \big[ F(u) - F(v) \big] < 2C \log D + 2,
		\end{flalign*}
		
		\noindent where to deduce the first equality we used \Cref{uvf}. It remains to use the Markov's inequality.
	\end{proof}
	
	The next step is to deduce a lemma bounding $\dist_{\mathcal{G}} (u, v) = o(N)$ on global scales in the liquid region; it will follow from applying \Cref{distancegestimate} on all bounded intervals on a path between $u$ and $v$. 	
		
	\begin{lem}
		
		\label{distancegestimate2}
		
		For any $\varepsilon \in (0, 1)$, there exist constants $C = C (\varepsilon) > 1$ and $C'> 1$ (depending on all the data) such that the following holds for any $N \ge C'$: Fix $u, v \in R_N$ with $|u - v| > C$, such that $\dist (\tfrac{1}{N} u, \partial \mathfrak{R}) \ge \varepsilon$ and $\dist (\tfrac{1}{N} v, \partial \mathfrak{R}) \ge \varepsilon$. Further suppose that there exists an open disk $\mathfrak{U} \subset \mathfrak{R}$ containing $\tfrac{1}{N} u$ and $\tfrac{1}{N} v$, with $\nabla \mathfrak{H} (z) \in \mathcal{T}_{\varepsilon}$ for each $z \in \mathfrak{U}$. Then, $\mathbb{P} \big[ \dist_{\mathcal{G}} (u, v) > \varepsilon |u - v| \big] \le \varepsilon$.
		
	\end{lem}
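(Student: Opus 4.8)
The plan is to connect $u$ to $v$ by a path that stays inside $\mathfrak{U}$ and at distance bounded below by a fixed multiple of $\varepsilon$ from $\partial\mathfrak{R}$, has length $O(|u-v|/N)$, cut it into $O(|u-v|/D)$ pieces of diameter at most $D$, apply \Cref{distancegestimate} to each consecutive pair of endpoints, sum the resulting bounds on $\mathbb{E}[\dist_{\mathcal{G}}]$, and conclude by Markov's inequality. The reason to pass through expectations, rather than take a union bound over the failure events in \Cref{distancegestimate}, is that \Cref{distancegestimate} — more precisely, the bound $\mathbb{E}[\dist_{\mathcal{G}}(u',v')] < 2C\log D + 2$ exhibited inside its proof, with $C$ the constant of \Cref{hmoment} — gives each piece only an $O(\log D)$ contribution. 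Summing over $O(|u-v|/D)$ pieces and using $\dist_{\mathcal{G}}(u,v) \le \sum_i \dist_{\mathcal{G}}(w_i,w_{i+1})$ (the triangle inequality in the tree $\mathcal{G}$) together with \Cref{uvf} will then yield $\mathbb{E}[\dist_{\mathcal{G}}(u,v)] = O\!\left(|u-v|\,D^{-1}\log D\right)$. Since $D^{-1}\log D\to 0$, fixing $D=D(\varepsilon)$ large makes this implicit constant smaller than $\varepsilon^2$, and Markov's inequality gives $\mathbb{P}[\dist_{\mathcal{G}}(u,v)>\varepsilon|u-v|]<\varepsilon$.

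To build the path, write $c$ and $\rho$ for the center and radius of $\mathfrak{U}$. If $|u-v|\le \varepsilon N$, take $\Gamma$ to be the straight segment $[N^{-1}u,N^{-1}v]$; its length is at most $\varepsilon$, so every point of $\Gamma$ lies within $\varepsilon/2$ of one of the endpoints $N^{-1}u,N^{-1}v$, hence at distance $\ge \varepsilon/2$ from $\partial\mathfrak{R}$. If $|u-v|>\varepsilon N$, let $u^{\ast}$ (resp. $v^{\ast}$) be the point of $[N^{-1}u,c]$ (resp. $[N^{-1}v,c]$) at distance $\min\{\varepsilon/2,|N^{-1}u-c|\}$ from $N^{-1}u$ (resp. at distance $\min\{\varepsilon/2,|N^{-1}v-c|\}$ from $N^{-1}v$), and set $\Gamma=[N^{-1}u,u^{\ast}]\cup[u^{\ast},v^{\ast}]\cup[v^{\ast},N^{-1}v]$. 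The two short legs again lie within $\varepsilon/2$ of an endpoint, hence at distance $\ge\varepsilon/2$ from $\partial\mathfrak{R}$; moreover $2\rho\ge|N^{-1}u-N^{-1}v|>\varepsilon$ in this case, and moving toward $c$ only increases the distance to $\partial\mathfrak{U}$, so $u^{\ast}$ and $v^{\ast}$ are at distance $\ge\varepsilon/2$ from $\partial\mathfrak{U}$. The concentric disk of radius $\rho-\varepsilon/2$ being convex, the middle leg $[u^{\ast},v^{\ast}]$ stays at distance $\ge\varepsilon/2$ from $\partial\mathfrak{U}$ as well; and since $\mathfrak{U}\subseteq\mathfrak{R}$ gives $\dist(z,\partial\mathfrak{R})\ge \dist(z,\partial\mathfrak{U})$ for $z\in\mathfrak{U}$, all of $\Gamma$ is at distance $\ge\varepsilon/2$ from $\partial\mathfrak{R}$. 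Finally the length of $\Gamma$ is at most $\varepsilon+|u^{\ast}-v^{\ast}|\le 2\varepsilon+|N^{-1}u-N^{-1}v|<3|u-v|/N$ by the case assumption. In both cases $\Gamma\subset\mathfrak{U}$, so $\nabla\mathfrak{H}\in\mathcal{T}_{\varepsilon}$ holds along all of $\Gamma$.

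One then selects lattice points $u=w_0,w_1,\ldots,w_k=v\in R_N$ with $|w_i-w_{i+1}|\le D$ and each $N^{-1}w_i$ within $O(N^{-1})$ of $\Gamma$; for $N$ large this forces $N^{-1}w_i\in\mathfrak{U}$ and $\dist(N^{-1}w_i,\partial\mathfrak{R})\ge\varepsilon/3$. A little care is needed near $u$ and $v$, where $\Gamma$ may graze $\partial\mathfrak{U}$: one places the intermediate $w_i$ along $[N^{-1}u,u^{\ast}]$, which moves toward $c$ and so becomes $\gtrsim D/N$ deep in $\mathfrak{U}$ after one step, keeping its lattice approximants inside $\mathfrak{U}$. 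Because $\Gamma$ has length $O(|u-v|/N)$, we get $k=O(|u-v|/D)$ with a universal implicit constant (and this is where the hypothesis $|u-v|>C$ with $C\gtrsim D$ is used to absorb additive terms). Since $\mathcal{T}_{\varepsilon}\subseteq\mathcal{T}_{\varepsilon/3}$, \Cref{distancegestimate} applies to each pair $(w_i,w_{i+1})$ with parameter $\varepsilon/3$: its first constant is universal, and its second constant $C_2(\varepsilon/3,D)$ depends only on $\varepsilon$ and the fixed data, since $D$ was chosen as a function of $\varepsilon$ alone. Extracting from its proof the bound $\mathbb{E}[\dist_{\mathcal{G}}(w_i,w_{i+1})]<2C\log D+2$, then using $\dist_{\mathcal{G}}(u,v)\le\sum_i\dist_{\mathcal{G}}(w_i,w_{i+1})$ and \Cref{uvf}, summing gives $\mathbb{E}[\dist_{\mathcal{G}}(u,v)]=O(|u-v|\,D^{-1}\log D)$, which finishes the proof by the Markov step of the first paragraph.

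I expect the main obstacle to be exactly this geometric bookkeeping: producing, for all admissible $u$, $v$, and $\mathfrak{U}$ at once, a path of length $O(|u-v|/N)$ inside $\mathfrak{U}$ that is \emph{uniformly} away from $\partial\mathfrak{R}$ (distance $\gtrsim\varepsilon$, not merely a positive distance that could shrink with the choice of $\mathfrak{U}$), so that \Cref{distancegestimate} may be invoked along it with a single $\varepsilon$-dependent parameter and hence with constants independent of $\mathfrak{U}$. The case split on $|u-v|$ versus $\varepsilon N$ and the detour toward the center of $\mathfrak{U}$ are what make this work; once the path is in hand, discretization, summation of the per-piece expectation bounds, and Markov's inequality are routine.
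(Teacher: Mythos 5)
Your proof follows essentially the same strategy as the paper's: discretize a path from $u$ to $v$ into $O(|u-v|/D)$ pieces of size at most $D$, apply \Cref{distancegestimate} to each consecutive pair, sum, and conclude via Markov. Two genuine differences are worth noting.

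First, you pass to the per-step \emph{expectation} bound $\mathbb{E}[\dist_{\mathcal{G}}(w_i,w_{i+1})] \lesssim \log D$ extracted from inside the proof of \Cref{distancegestimate}, rather than using the stated \emph{probability} bound. The paper instead uses the probability bound together with an indicator decomposition $\dist_{\mathcal{G}}(v_{i-1},v_i)\le D\,\textbf{1}_{\mathscr{E}_i}+C_1(\log D)^2\textbf{1}_{\mathscr{E}_i^c}$ and sums, arriving at $\mathbb{E}[\dist_{\mathcal{G}}(u,v)]\le 4|u-v|\bigl((\log D)^{-1}+C_1 D^{-1}(\log D)^2\bigr)$. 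Your route is cleaner and gives a slightly sharper per-step rate, but both go to zero after dividing by $D$, so the difference is cosmetic. Strictly speaking, re-using an intermediate estimate from another lemma's proof rather than its statement is a stylistic wrinkle; since you'd invoke the same hypotheses either way, it is harmless.

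Second — and this is the more substantive contribution — you devote most of your effort to constructing a path through $\mathfrak{U}$ that stays a \emph{uniform} distance $\gtrsim\varepsilon$ from $\partial\mathfrak{R}$ (the case split on $|u-v|\lessgtr\varepsilon N$, the detour toward the center $c$, the care near the endpoints where $\Gamma$ may graze $\partial\mathfrak{U}$). The paper simply asserts the existence of a sequence $v_0,\dots,v_k\in N\mathfrak{U}\cap R_N$ with $|v_i-v_{i-1}|\le D$ and $k\le\lceil 2D^{-1}|u-v|\rceil$, without addressing the fact that \Cref{distancegestimate} requires each link's endpoints to satisfy both a distance-to-boundary condition and a liquid-slope condition with a fixed $\varepsilon'$; if $\mathfrak{U}$ were tangent to $\partial\mathfrak{R}$, a naive choice of path could run arbitrarily close to the boundary. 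You correctly identified this as the nontrivial point and filled it in. (In the paper's eventual application, the disk $\mathfrak{U}(z_j)$ is small and entirely inside $\mathfrak{R}_{\delta/2}$, so the gap never bites; but as a proof of the lemma as stated, your version is more complete.) The discretization, summation, and Markov steps are then routine and match the paper.
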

	
	\begin{proof}
		
		Let $D > 2$ denote some integer, which will only depend on $\varepsilon$ and $\mathfrak{R}$, and will be fixed later. Then, there exists a sequence $u = v_0, v_1, \ldots , v_k = v \in N \mathfrak{U} \cap R_N$ such that $|v_i - v_{i - 1}| \le D$ for each $i \in [1, k]$ and $k \le \big\lceil 2 D^{-1} |u - v| \big\rceil$. Recall the constant $C_1 > 1$ from \Cref{distancegestimate}, and define for each $i \in [1, k]$ the event
		\begin{flalign*}
		\mathscr{E}_i = \big\{ \dist_{\mathcal{G}} (v_{i - 1}, v_i) > C_1 (\log D)^2 \big\}.
		\end{flalign*}
	
		Then, \Cref{distancegestimate} implies $\mathbb{P} \big[ \mathscr{E}_i \big] \le (\log D)^{-1}$. So, since denoting by $E^c$ the complement of any event $E$ we have
		\begin{flalign*}
			\dist_{\mathcal{G}} (u, v) \le \displaystyle\sum_{i = 1}^k \big( |v_i - v_{i + 1}| \textbf{1}_{\mathscr{E}_i} + C_1 (\log D)^2 \textbf{1}_{\mathscr{E}_i^c} \big),
		\end{flalign*}
		
		\noindent it follows that
		\begin{flalign*}
		\mathbb{E} \big[ \dist_{\mathcal{G}} (u, v) \big] & \le \displaystyle\sum_{i = 1}^k D \mathbb{P} [\mathscr{E}_i] + C_1 k (\log D)^2 \\
		& \le kD (\log D)^{-1} + C_1 k (\log D)^2 \\
		& \le D (\log D)^{-1} \big\lceil 2D^{-1} |u - v| \big\rceil + C_1 (\log D)^2 \big\lceil 2 D^{-1} |u - v| \big\rceil \\
		& \le 4 |u- v| \big( (\log D)^{-1} + C_1 D^{-1} (\log D)^2 \big),
		\end{flalign*}
	
		\noindent whenever $|u - v| \ge D$. Letting $C = C (\varepsilon) > 1$ so that $4 (\log C)^{-1} + 4 C_1 C^{-1} (\log C)^2 < \varepsilon^2$, we deduce by taking $D = C$ that $\mathbb{E} \big[ \dist_{\mathcal{G}} (u, v) \big] < \varepsilon^2 |u - v|$, and so the lemma follows from the Markov's inequality.
	\end{proof}

	The next lemma will be used in settings complimentary to Lemma \ref{distancegestimate2}. The claim is that if $u, v \in R_N$ are such that $v - u$ is parallel to an axis of $\mathbb{T}$ and $H$ has approximately maximal or minimal slope along $v - u$, then $\dist_{\mathcal{G}} (u, v)$ is small.

	\begin{lem}
		
		\label{distanceg2}
		
		Let $\delta \in (0, 1)$ be a real number, and let $u, v \in R_N$ denote vertices such that $v - u = tw$ for some $t \in \mathbb{Z}_{\ge 0}$ and $w \in \big\{ (1, 0), (0, 1), (1, 1) \big\}$. Define the events
		\begin{flalign}
			\label{f}
			\begin{aligned}
			\mathcal{F}_1 = \mathcal{F}_1 (u, v; \delta) & = \Big\{ H(v) - H(u) \le \delta t \Big\} \cap  \Big\{ H' (v) - H' (u) \le \delta t \Big\}; \\
			 \mathcal{F}_2 = \mathcal{F}_2 (u, v; \delta) & = \Big\{ H(v) - H(u) \ge (1 - \delta) t \Big\} \cap \Big\{ H' (v) - H' (u) \ge (1 - \delta) t \Big\}.
			 \end{aligned}
		 		\end{flalign}
	
		\noindent On $\mathcal{F}_1 \cup \mathcal{F}_2$, we have $\dist_{\mathcal{G}} (u, v) \le 2 \delta t$.
		
	\end{lem}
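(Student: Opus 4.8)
The plan is to control $\dist_{\mathcal G}(u,v)$ by a purely combinatorial count of the places where $F$ changes along the straight lattice segment from $u$ to $v$, and then to read off from $\mathcal F_1$ and $\mathcal F_2$ that this count is at most $2\delta t$.

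Concretely, write $u_i = u + iw$ for $0\le i\le t$, so that $u_0=u$, $u_t=v$, and consecutive $u_{i-1},u_i$ are adjacent in $\mathbb T$. The first step is the deterministic bound
\[
\dist_{\mathcal G}(u,v)\ \le\ \#\bigl\{\,1\le i\le t:\ F(u_i)\ne F(u_{i-1})\,\bigr\}.
\]
Indeed, if $F(u_{i-1})=F(u_i)$ then $u_{i-1}$ and $u_i$ lie in the same level set of $\mathscr S$, by property (2) of the level-set decomposition; and if $F(u_{i-1})\ne F(u_i)$ then they lie in two distinct adjacent level sets, which are joined by an edge of $\mathcal G$ (the undirected graph underlying $\mathcal G$ being precisely the adjacency graph of the level sets, which is what Lemma \ref{sjexterior} together with Definition \ref{graphg} furnish). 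Hence, after collapsing repeated consecutive entries, $S_{u_0},S_{u_1},\dots,S_{u_t}$ is a walk in $\mathcal G$ from the level set containing $u$ to the one containing $v$ whose length equals the displayed count; since $\dist_{\mathcal G}(u,v)$ is the minimal length of such a walk, the bound follows.

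The second step is elementary arithmetic with $\{0,1\}$-valued increments. Put $a_i=H(u_i)-H(u_{i-1})$ and $b_i=H'(u_i)-H'(u_{i-1})$; since $H$ and $H'$ are height functions and $w\in\{(1,0),(0,1),(1,1)\}$, we have $a_i,b_i\in\{0,1\}$, and $F(u_i)-F(u_{i-1})=a_i-b_i$ is nonzero exactly when $a_i\ne b_i$. On $\mathcal F_1$, the inequality $\sum_{i=1}^t a_i=H(v)-H(u)\le\delta t$ forces $\#\{i:a_i=1\}\le\delta t$, and likewise $\#\{i:b_i=1\}\le\delta t$; since $a_i\ne b_i$ requires $a_i=1$ or $b_i=1$, the count in the first step is at most $2\delta t$. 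On $\mathcal F_2$ the same reasoning applied to $1-a_i$ and $1-b_i$ (which each sum to at most $\delta t$) again bounds the count by $2\delta t$. Combined with the first step, this yields $\dist_{\mathcal G}(u,v)\le 2\delta t$ on $\mathcal F_1\cup\mathcal F_2$.

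The one point demanding care is the first step: one must know that two distinct level sets adjacent in $\mathbb T$ are genuinely joined by an edge of $\mathcal G$, i.e.\ that $\mathcal G$ encodes \emph{all} adjacencies among level sets and not merely the parent–child ones used to root the tree. This is exactly the structural content behind Lemma \ref{sjexterior} (functionality of the exterior-adjacency relation, so that the level sets assemble into the tree $\mathcal G$ whose edge set is the adjacency relation); granting this, the remainder of the argument is bookkeeping.
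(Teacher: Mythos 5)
Your proof is correct and follows essentially the same route as the paper: walk along the lattice segment $u_i = u + iw$, use that the increments of $H$ and $H'$ along $w$ are $\{0,1\}$-valued so that $\mathcal F_1$ (resp.\ $\mathcal F_2$) forces at most $\delta t$ unit (resp.\ zero) increments for each, hence at most $2\delta t$ indices where $F$ changes, and convert this into a bound on $\dist_{\mathcal G}(u,v)$ by counting level-set crossings. Your explicit remark that the deterministic step $\dist_{\mathcal G}(u,v) \le \#\{i : F(u_i)\ne F(u_{i-1})\}$ relies on the edges of $\mathcal G$ being exactly the adjacency relation among level sets is a point the paper leaves implicit, but it does not change the substance of the argument.
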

	
	\begin{proof}
		
		 In this proof, let us only assume that $\mathcal{F}_1$ holds, for the verification in the case when $\mathcal{F}_2$ holds is entirely analogous. Set $u_j = u + j w \in R_N$ for each $j \in [0, t]$, so that $u_t = v$.	 Note that both $H(u_j)$ and $H'(u_j)$ depend on $j$ in a monotone way by our definition of the height function and the choice of $w$. Hence, using \eqref{f}, we conclude that there exist at most $\delta t$ indices $i \in [1, t]$ such that $H (u_i) -  H (u_{i - 1}) \ne 0$ and at most $\delta t$ indices $i \in [1, t]$ such that $H' (u_i) - H' (u_{i - 1}) \ne 0$. Thus, there exist at most $2 \delta t$ indices $i \in [1, t]$ for which $F(u_i) = H (u_i) - H' (u_i) \ne H (u_{i - 1}) - H' (u_{i - 1}) = F(u_{i - 1})$. Hence, for all but $(1 - 2 \delta) t$ indices $i \in [1, t]$, we must have that $u_i$ and $u_{i - 1}$ are in the same connected component, which implies that at most $2 \delta t$ connected components separate $u$ from $v$.
	\end{proof}

	Using the above results, the following proposition bounds the (graph) distance between any vertex in $R_N$ and the root of $\mathcal{G}$.

	\begin{prop}
		
		\label{distanceguv}
	
		For any real number $\varepsilon > 0$, there exists a constant $C> 1$ such that for any $N > C$, we have that $\mathbb{P} \big[ \dist_{\mathcal{G}} (v, \partial R) > \varepsilon N \big] < \varepsilon$ for each $v \in R_N$.
		
	\end{prop}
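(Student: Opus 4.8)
The plan is to produce, for a typical tiling and for every vertex $v$ with $\mathfrak v := \tfrac1N v$ away from $\partial\mathfrak R$, an explicit path in $R_N$ from $v$ to $\partial R_N$ along which the distance-graph cost $\dist_{\mathcal G}$ is $o(N)$, obtained by concatenating a bounded number of legs of two types: legs lying in the uniformly liquid region, to which \Cref{distancegestimate2} applies, and legs parallel to an axis of $\mathbb T$ along which the limit shape $\mathfrak H$ has extremal slope, to which \Cref{distanceg2} applies once the extremal-slope behavior has been transferred from $\mathfrak H$ to the discrete height functions $H, H'$ via \Cref{Lemma_var_principle}. Fix $\varepsilon>0$. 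Using the deterministic inequality $\dist_{\mathcal G}(u,v)\le|u-v|$ together with $(R_N;h_N)\to(\mathfrak R;\mathfrak h)$, one reduces to vertices $v$ with $\mathfrak v$ in the compact set $\mathfrak R_\delta := \{z\in\mathfrak R:\dist(z,\partial\mathfrak R)\ge\delta\}$, for a small $\delta=\delta(\varepsilon,\mathfrak R)>0$: if $\dist(\mathfrak v,\partial\mathfrak R)<\delta$ then already $\dist_{\mathcal G}(v,\partial R)\le\dist(v,\partial R_N)<\varepsilon N$ for all large $N$.

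The geometric heart of the argument is a deterministic statement about $\mathfrak H$: there are constants $\varepsilon_0, L_0>0$ and $M_0\in\mathbb Z_{>0}$, depending only on $(\mathfrak R,\mathfrak h,\delta)$, such that every $\mathfrak v\in\mathfrak R_\delta$ can be joined to a point within distance $\delta/2$ of $\partial\mathfrak R$ by a path $\gamma\subset\mathfrak R$ of length at most $L_0$ which is a concatenation of at most $M_0$ straight segments, each either (I) lying in $\{z:\nabla\mathfrak H(z)\in\mathcal T_{\varepsilon_0}\}$ and at distance $\ge\varepsilon_0$ from $\partial\mathfrak R$, or (II) of the form $\{\mathfrak u+sw:0\le s\le\ell\}$ with $w\in\{(1,0),(0,1),(1,1)\}$ along which $\mathfrak H(\mathfrak u+sw)$ equals $\mathfrak H(\mathfrak u)$ for all $s$ or equals $\mathfrak H(\mathfrak u)+s$ for all $s$. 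The construction rests on the structure of $\mathfrak H$: the open liquid region $\mathcal L=\{\nabla\mathfrak H\in\mathcal T\}$, on which $\nabla\mathfrak H$ is continuous; the finitely many frozen facets, on each of which $\nabla\mathfrak H$ is a fixed vertex of $\overline{\mathcal T}$ so $\mathfrak H$ is affine and has an extremal axis direction; and the locus where $\nabla\mathfrak H$ is discontinuous, from every point of which \Cref{nablaf} produces precisely a type-(II) segment ending on $\partial\mathfrak R$. One routes $\gamma$ from $\mathfrak v$ toward $\partial\mathfrak R$ through $\mathcal L$ (or, if $\mathfrak v$ lies in a facet, along that facet's affine direction); each time the slope degenerates toward $\partial\mathcal T$ or a discontinuity of $\nabla\mathfrak H$ is encountered, one inserts a type-(II) leg --- from \Cref{nablaf}, or by following the affine structure of the adjacent facet --- which either terminates on $\partial\mathfrak R$ or re-enters $\mathcal L$; finiteness of the facets and the one-dimensional nature of the discontinuity locus bound the number of insertions and the total length uniformly in $\mathfrak v$. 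I expect this deterministic routing to be the main obstacle, since it demands a fairly detailed grip on the global geometry of $\mathfrak H$, with \Cref{nablaf} the crucial input.

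Granting the path $\gamma$, one passes to a discrete path $v=v_0,v_1,\dots,v_k\in R_N$ shadowing $N\gamma$, split at the segment endpoints. Fix a small $\eta=\eta(\varepsilon,M_0,L_0,\varepsilon_0)$ and work on the event $\mathcal A$ that $\max_{v\in R_N\cap N\mathfrak R}|\tfrac1N H_N(v)-\mathfrak H(\tfrac1N v)|\le\eta$ and the analogous bound for $H_N'$; by \Cref{Lemma_var_principle} (with parameter $\min(\eta,\varepsilon/8)$ and a union bound), $\mathbb P[\mathcal A^c]\le\varepsilon/4$. On $\mathcal A$, along each type-(II) leg with endpoints $u,u'$ satisfying $u'-u=tw$, both $H_N(u')-H_N(u)$ and $H_N'(u')-H_N'(u)$ lie within $2\eta N$ of their common extremal value ($0$ or $t$); hence \Cref{distanceg2}, applied with parameter $2\eta N/t$ when $t\ge\varepsilon N/(10M_0)$ and replaced by the trivial bound $\dist_{\mathcal G}(u,u')\le t$ for shorter legs, bounds the total contribution of all type-(II) legs by $O(M_0\eta N)+\varepsilon N/10$. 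Each type-(I) leg is cut into a bounded number of sub-segments, each of length $\Theta(\varepsilon_0 N)$ and contained, together with a fixed neighborhood, in a disk inside $\{\nabla\mathfrak H\in\mathcal T_{\varepsilon_0/2}\}\subset\mathfrak R$ (possible by uniform continuity of $\nabla\mathfrak H$ on compact subsets of $\mathcal L$), to each of which \Cref{distancegestimate2} is applied with a small parameter $\varepsilon_1$; off an event of probability at most $O(M_0 L_0\varepsilon_0^{-1})\cdot\varepsilon_1\le\varepsilon/4$, the type-(I) legs contribute at most $\varepsilon_1 L_0 N$ in total. Finally, the endpoint of $N\gamma$ lies within $\delta N$ of $\partial R_N$, contributing at most $\delta N$ by the deterministic bound. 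Choosing $\eta,\varepsilon_1,\delta$ small enough that all of these terms sum to less than $\varepsilon N$ yields $\dist_{\mathcal G}(v,\partial R)<\varepsilon N$ on an event of probability at least $1-\varepsilon$. Since the number and lengths of the legs are bounded uniformly over $\mathfrak v\in\mathfrak R_\delta$ and the exceptional events are controlled uniformly in $v$ (the relevant lemmas being uniform over admissible vertices), the resulting constant $C$ is uniform in $v$, as claimed.
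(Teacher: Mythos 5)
Your overall plan---reduce to $\mathfrak v\in\mathfrak R_\delta$, route a path from $\mathfrak v$ out to $\partial\mathfrak R$ by concatenating short legs, estimate $\dist_{\mathcal G}$ along each leg using \Cref{distancegestimate2} in the liquid region and \Cref{distanceg2}+\Cref{Lemma_var_principle} where the slope is extremal---is the same as the paper's, and the reduction step and the bookkeeping at the end are fine. But the decomposition of the path into only the two leg types you allow has a genuine gap.

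Your type (I) legs require $\nabla\mathfrak H\in\mathcal T_{\varepsilon_0}$ on a disk at distance $\ge\varepsilon_0$ from $\partial\mathfrak R$, and your type (II) legs require $\mathfrak H$ to be \emph{exactly} extremal along the segment, i.e. $\mathfrak H(\mathfrak u+sw)=\mathfrak H(\mathfrak u)$ or $\mathfrak H(\mathfrak u)+s$ for all $s$. Exactly extremal segments only come from two sources: \Cref{nablaf} (at discontinuities of $\nabla\mathfrak H$) and interiors of frozen facets where $\nabla\mathfrak H$ is a vertex of $\overline{\mathcal T}$. But the region where the liquid phase tapers off into a facet---where $\nabla\mathfrak H$ is continuous, lies in $\mathcal T$, yet is outside $\mathcal T_{\varepsilon_0}$---is covered by \emph{neither} type, and your routing necessarily crosses it on the way from the liquid region into a facet (you cannot simply shrink $\varepsilon_0$ to zero, since the constant in \Cref{distancegestimate2} degenerates as $\varepsilon\to 0$). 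The paper's proof introduces a third leg type exactly for this: when $\nabla\mathfrak H$ is continuous at $z$ and within $2\delta$ of a segment of $\partial\mathcal T$, one moves along the corresponding lattice axis, obtains $\big|\mathfrak H(\zeta_i)-\mathfrak H(\zeta_{i+1})\big|<3\delta\,|\zeta_i-\zeta_{i+1}|$ by integrating the near-degenerate gradient, and then applies \Cref{distanceg2} with the small but nonzero parameter $4\delta$. Your write-up implicitly assumes that once ``the slope degenerates toward $\partial\mathcal T$'' you can insert a type-(II) leg, but there is no exactly-extremal segment available there.

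A second, lesser issue is the uniform bound $M_0$ on the number of legs. You flag its construction as the main obstacle but do not prove it, and it is stronger than necessary. The paper sidesteps it: it extracts a \emph{finite} open subcover of $\overline{\mathfrak R}_\delta$ by local neighborhoods (small disks where $\nabla\mathfrak H$ is close to its value at the center, or $\delta$-neighborhoods of the \Cref{nablaf} segments at discontinuities), then builds the path inductively so that each leg moves in a fixed direction from $\{(1,0),(0,1),(1,1)\}$ and exits the current cover element; the number of legs $k$ may depend on $\mathfrak v$, but the per-leg failure probability is taken to be $\delta k^{-1}$, so the union bound is $k$-free, and the total length is controlled by $\sum|\zeta_i-\zeta_{i-1}|\le 2\,\diam\mathfrak R$ because each increment lies in $\mathbb R_{\ge0}^2$. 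Replacing your hoped-for uniform $M_0$ by this device would simplify your argument considerably and avoid having to establish a sharp global structural claim about $\mathfrak H$.

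Once you add a treatment of the near-frozen intermediate region (say, using \Cref{distanceg2} with a small nonzero $\delta$ as above) and drop the requirement of a uniform leg count in favor of a $k^{-1}$-scaled union bound, the proposal should go through along essentially the same lines as the paper.
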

	
	\begin{proof}
		
		Fix a real number $\delta > 0$ with $\delta < \tfrac{\eps}{25} (1+ \diam \mathfrak{R})^{-1} $, and let $\mathfrak{R}_{\delta} = \big\{ z \in \mathfrak{R}: d (z, \partial \mathfrak{R}) > \delta \big\}$. Let $\mathfrak{v} = \tfrac{1}{N} v$. We may assume that $\mathfrak{v} \in \mathfrak{R}_{\delta}$, for otherwise for large $N$ we have a deterministic bound
$$\dist_{\mathcal{G}} (v, \partial R_N) \le d (v, \partial R_N) \le 2 \delta N < \varepsilon N.$$

Our plan is to define a path $\{ \zeta_0, \zeta_1, \ldots , \zeta_k \} \subset \mathfrak{R}\cap \tfrac{1}{N}R_N$ from $\mathfrak{v} = \zeta_0$ to $\zeta_k \in \mathfrak{R} \setminus \mathfrak{R}_{\delta}$, so that\footnote{The points $N\zeta_i$ in  $\dist_{\mathcal{G}}(\cdot,\cdot)$ need to be vertices in $\mathbb{T}$, i.e., they should be confined to the integer lattice. Hence, we formally should  write $\dist_{\mathcal{G}} (\lfloor N \zeta_{i - 1}\rfloor , \lfloor N \zeta_i\rfloor)$. For notational convenicence we are going to omit the integer parts throughout this proof; this will not affect the validity of the argument.} $\dist_{\mathcal{G}} (N \zeta_{i - 1}, N \zeta_i) \le 2 \delta N |\zeta_i - \zeta_{i - 1}|$ with high probability, for each $i \in [1, k]$.
		To that end, for each $z \in \mathfrak{R}$, we define an open subset $\mathfrak{U} (z) \subset \mathfrak{R}$ as follows. If $\nabla \mathfrak{H}$ is continuous at $z$ then let $\mathfrak{U} (z)$ denote a disk, of radius at most $\frac{\delta}{2}$, centered at $z$ such that $\big| \nabla \mathfrak{H} (z') - \nabla \mathfrak{H} (z) \big| < \delta$ for each $z' \in \overline{\mathfrak{U} (z)}$. If $\nabla \mathfrak{H}$ is discontinuous at $z$, then \Cref{nablaf} implies that there exists $z' \in \partial \mathfrak{R}$ such that the conditions there hold; then let $\mathfrak{U} (z)$ denote the $\delta$-neighborhood of the segment connecting $z$ and $z'$.
		
Because $\bigcup_{z \in \mathfrak{R}_{\delta}} \mathfrak{U} (z)$ is an open cover of the compact domain $\overline{\mathfrak{R}}_{\delta}$, it admits a finite subcover $\bigcup_{i = 1}^K \mathfrak{U} (z_i)$. We call $z = z_i$ of type 1 if $\nabla \mathfrak{H}$ is discontinuous at $z_i$; of type 2 if $\nabla \mathfrak{H} (z) \in \mathcal{T}_{2 \delta}$ and $\nabla \mathfrak{H}$ is continuous at $z$; and of type $3$ if $\nabla \mathfrak{H} (z) \notin \mathcal{T}_{2 \delta}$ and $\nabla \mathfrak{H}$ is continuous at $z$. If $z$ is of type 3, then $\nabla \mathfrak{H} (z)$ is of distance at most $2 \delta$ from either the segment $\big[ (0, 0), (1, 0) \big]$, $\big[ (0, 0), (0, 1)\big]$, or $\big[ (1, 0), (0, 1) \big]$ on $\partial \mathcal{T}$; we call $z$ of types 3(a), 3(b), and 3(c) in these cases, respectively.

		Now we define the sequence $\mathfrak{v} = \zeta_0, \zeta_1, \zeta_2, \ldots , \zeta_k  \in \mathfrak{R}\cap \tfrac{1}{N}R_N$ inductively as follows. First set $\zeta_0 = \mathfrak{v}$, and suppose $\zeta_i$ is defined for some $i \ge 0$. If $\zeta_i \in \mathfrak{R}\setminus \mathfrak{R}_{\delta}$, then set $k = i$ and stop. If otherwise $\zeta_i \in \mathfrak{R}_{\delta}$, then there exists some $z_j = z_{j (i)}$ such that $\zeta_i \in \mathfrak{U} (z_j)$. We will define $\zeta_{i + 1}$ depending on the type of $z_j$.
		
		\begin{enumerate}
			\item If $z_j$ is of type $1$, then let $z' \in \partial \mathfrak{R}$ and $w \in \big\{ (1, 0), (0, 1), (1, 1) \big\}$ be the point and vector corresponding to $z = z_j$ under \Cref{nablaf}, respectively. Set $k = i + 1$ and $\zeta = \zeta_k = z' - \frac{\delta}{2} w$.
			\item If $z_j$ is of type $2$, then let $\zeta_{i + 1} \in \partial \mathfrak{U} (z_j)$ denote the point such that $\zeta_{i + 1} - \zeta_i$ is a positive multiple of $(1, 1)$.
			\item Suppose $z_j$ is of type 3.
			\begin{enumerate}
				\item If $z_j$ is of type 3(a), then let $\zeta_{i + 1} \in \partial \mathfrak{U} (z_j)$ denote the point such that $\zeta_{i + 1} - \zeta_i$ is a positive multiple of $(0, 1)$.
				\item If $z_j$ is of type 3(b), then let $\zeta_{i + 1} \in \partial \mathfrak{U} (z_j)$ denote the point such that $\zeta_{i + 1} - \zeta_i$ is a positive multiple of $(1, 0)$.
				\item If $z_j$ is of type 3(c), then let $\zeta_{i + 1} \in \partial \mathfrak{U} (z_j)$ denote the point such that $\zeta_{i + 1} - \zeta_i$ is a positive multiple of $(1, 1)$.
			\end{enumerate}
		\end{enumerate}
	
		\noindent Let us make three comments about this definition. First, observe when $z_j$ is of type 3 that $\zeta_{i + 1}$ is chosen so that $\zeta_{i + 1} - \zeta_i$ is approximately orthogonal to $\nabla \mathfrak{H} (z_i)$, namely, $\mathfrak{H} (\zeta_i) \approx \mathfrak{H} (\zeta_{i + 1})$. Second, we have $\zeta_{i + 1} \notin \mathfrak{U} \big( z_{j(i)} \big)$ (since $\mathfrak{U} \big( z_{j(i)} \big)$ is open), so $\zeta_{i + 1} \ne \zeta_i$. In particular, since $\zeta_{i + 1} - \zeta_i \in \mathbb{R}_{\ge 0}^2$, this implies that the process described above to define the sequence $\{ \zeta_0, \zeta_1, \ldots , \zeta_k \}$ eventually ends. Third, as can be verified by induction on $j$, we have $\dist (\zeta_j, \partial \mathfrak{R}_{\delta}) \ge \frac{\delta}{2}$ for each $j \in [0, k]$, so that $N \zeta_k \in R_N = R$ for sufficiently large $N$.
		
		Next, fix some $i \in [0, k - 1]$, and set $\widetilde{u} = N \zeta_i$ and $\widetilde{v} = N \zeta_{i + 1}$; further denote $z_j = z_{j (i)}$. We claim that
		\begin{flalign}
			\label{pguv}
			\begin{aligned}
			& \mathbb{P} \big[ \dist_{\mathcal{G}} (\widetilde{u}, \widetilde{v}) > 4 \delta N (\diam \mathfrak{R} + 1) \big] < \delta k^{-1}, \qquad \text{if $z_j$ is of type $1$}; \\
			& \mathbb{P} \big[ \dist_{\mathcal{G}} (\widetilde{u}, \widetilde{v}) > 8 \delta |\widetilde{u} - \widetilde{v}| \big] < \delta k^{-1}, \qquad \qquad \qquad \text{if $z_j$ is of type $2$ or $3$}.
			\end{aligned}
		\end{flalign}
	
		Before proving \eqref{pguv}, let us show how it implies the statement of Proposition \ref{distanceguv}. To that end, since $k = i + 1$ whenever $z_{j(i)}$ is of type 1, there is at most one index $i$ such that $z_j = z_{j(i)}$ is of type 1. Hence, applying \eqref{pguv} and a union bound, we deduce
		\begin{flalign}
			\label{vprobabilityr}
			\begin{aligned}
			\mathbb{P} \big[ \dist_{\mathcal{G}} (v, \partial R_N) > \varepsilon N \big] & \le \mathbb{P} \big[ \dist_{\mathcal{G}} (v, N \zeta_k) > (\varepsilon - 2 \delta) N \big] \\
			& \le \displaystyle\sum_{i = 1}^{k - 1} \mathbb{P} \big[ \dist_{\mathcal{G}} (N \zeta_{i - 1}, N \zeta_i) > 8 \delta |\zeta_i - \zeta_{i - 1}| N \big] \\
			& \qquad + \mathbb{P} \big[ \dist_{\mathcal{G}} (N \zeta_{k - 1}, N \zeta_k) > 4 \delta N (\diam \mathfrak{R} + 1) \big] < \delta < \varepsilon.
			\end{aligned}
		\end{flalign}
	
		\noindent Here, for the first inequality, we used the fact that
		\begin{flalign*}
			\dist_{\mathcal{G}} (v, \partial R_N) \le \dist_{\mathcal{G}} (v, N \zeta_k) + \dist_{\mathcal{G}} (N \zeta_k, \partial R_N) \le \dist_{\mathcal{G}} (v, N \zeta_k) + 2 \delta N,
		\end{flalign*}
	
		\noindent which holds for large $N$ since $\zeta_k \in \mathfrak{R} \setminus \mathfrak{R}_{\delta}$. For the second estimate, we used the union bound and the fact that
		\begin{flalign*}
			 \displaystyle\sum_{i = 1}^{k - 1} 8 \delta |\zeta_i - \zeta_{i - 1}| + 4 \delta (\diam \mathfrak{R} + 1) \le 16 |\zeta_{k - 1} - \zeta_0| + 4 \delta (\diam \mathfrak{R} + 1) \le 20 \delta (\diam \mathfrak{R} + 1) \le \varepsilon - 2 \delta,
		\end{flalign*}
	
		\noindent where the first bound holds since $\zeta_i - \zeta_{i - 1}$ is for each $i \in [1, k - 1]$ a positive multiple of some vector among $\big\{ (1, 0), (0, 1), (1, 1) \big\}$; the second holds since $|\zeta_{k - 1} - \zeta_0| \le \diam \mathfrak{R}$; and the third holds since $25 \delta (\diam \mathfrak{R} + 1) < \varepsilon$. Since \eqref{vprobabilityr} verifies the proposition, it remains to establish \eqref{pguv}, which we do by analyzing each case for $z_j$ individually.
		
{\bf Case 1:} $z_j$ is of type 1, meaning that $\mathfrak{U} (z_j)$ is the $\delta$-neighborhood of the line $\ell = \ell (z_j, \zeta_{i + 1})$ connecting $z_j$ to the point $z' \in \partial \mathfrak{R}$, and that $z' - \frac{\delta}{2} w = \zeta_{i + 1} = \tfrac{1}{N} \widetilde{v}$. Here the analysis is based on Lemma \ref{distanceg2}.

Denoting $z' - z_j = tw$ for some $t \in \mathbb{R}_{> 0}$ and $w \in \big\{ (1, 0), (0, 1), (1, 1) \big\}$, \Cref{nablaf} implies that $\mathfrak{H} (\zeta_{i + 1}) - \mathfrak{H} (z_j) = 0$ if $w \in \big\{ (1, 0), (0, 1) \big\}$ and $\mathfrak{H} (\zeta_{i + 1}) - \mathfrak{H} (z_j) = t - \frac{\delta}{2}$ if $w = (1, 1)$. Let $\widetilde{z} = z_j + sw \in \ell$, for some $s \in [0, t]$, denote the point on $\ell$ closest to $\tfrac{1}{N} \widetilde{u}$. Then, \Cref{Lemma_var_principle} implies for sufficiently large $N$ that
		\begin{flalign*}
			& \mathbb{P} \Big[ \big| H (\widetilde{v}) - N \mathfrak{H} (\zeta_{i + 1}) \big| \le \delta N \Big] > 1 - \displaystyle\frac{\delta}{4k}; \qquad \mathbb{P} \Big[ \big| H (N \widetilde{z}) - N \mathfrak{H} (\widetilde{z}) \big| \le \delta N \Big] > 1 -  \displaystyle\frac{\delta}{4k}; \\
			& \mathbb{P} \Big[ \big| H' (\widetilde{v}) - N \mathfrak{H} (\zeta_{i + 1}) \big| \le \delta N \Big] > 1 - \displaystyle\frac{\delta}{4k}; \qquad \mathbb{P} \Big[ \big| H' (N \widetilde{z}) - N \mathfrak{H} (\widetilde{z}) \big| \le \delta N \Big] > 1 - \displaystyle\frac{\delta}{4k}.
		\end{flalign*}
	
		\noindent It follows from a union bound that 	
		\begin{flalign*}
			& \mathbb{P} \bigg[ \Big| H(\widetilde{v}) - H(N \widetilde{z}) - N \big( \mathfrak{H} (\zeta_{i + 1}) - \mathfrak{H} (\widetilde{z}) \big) \Big| \le 2 \delta N \bigg] > 1 - \displaystyle\frac{\delta}{2k}; \\
			& \mathbb{P} \bigg[ \Big| H' (\widetilde{v}) - H' (N \widetilde{z}) - N \big( \mathfrak{H} (\zeta_{i + 1}) - \mathfrak{H} (\widetilde{z}) \big) \Big| \le  2 \delta N \bigg] > 1 - \displaystyle\frac{\delta}{2k}.
		\end{flalign*}
			
		\noindent So, since $z'$ is given through \Cref{nablaf} and $\zeta_{i + 1} = z' - \frac{\delta}{2}$, we deduce from a union bound that
		\begin{flalign*}
			\mathbb{P} \big[ \mathcal{F}_1 (N \widetilde{z}, \widetilde{v}; 2 \delta) \cup \mathcal{F}_2 (N \widetilde{z}, \widetilde{v}; 2 \delta) \big] > 1 - \delta k^{-1},
		\end{flalign*}
	
		\noindent where we recall the events $\mathcal{F}_1$ and $\mathcal{F}_2$ from \eqref{f}. Hence, \Cref{distanceg2} gives
		\begin{flalign}
			\label{pgzv}
			\mathbb{P} \big[ \dist_{\mathcal{G}} (N \widetilde{z}, \widetilde{v}) \le 4 \delta N |\widetilde{z} - \zeta_{i + 1}| \big] \ge \mathbb{P} \big[ \dist_{\mathcal{G}} (N \widetilde{z}, \widetilde{v}) \le 4 \delta s N \big] > 1 - \delta k^{-1}.
		\end{flalign}
		
		\noindent Since $\tfrac{1}{N} \widetilde{u} = \zeta_i \in \mathfrak{U} (z_j)$, we have $|\widetilde{z} - \tfrac{1}{N} \widetilde{u}| < \delta$. So,
		\begin{flalign*}
			\dist_{\mathcal{G}} (\widetilde{u}, \widetilde{v}) \le \dist_{\mathcal{G}} (\widetilde{u}, N \widetilde{z}) + \dist_{\mathcal{G}} (N \widetilde{z}, \widetilde{v}) \le |\widetilde{u} - N \widetilde{z}| + \dist_{\mathcal{G}} (N \widetilde{z}, \widetilde{v}) \le \dist_{\mathcal{G}} (N \widetilde{z}, \widetilde{v}) + N \delta,
		\end{flalign*}
	
		\noindent which together with \eqref{pgzv} (and the bound $|\widetilde{z} - \zeta_{i + 1}| \le \diam \mathfrak{R}$) implies the first statement of \eqref{pguv}.
			
\medskip

{\bf Case 2:} $z_j$ is of type 2, in which case $\nabla \mathfrak{H} (z) \in \mathcal{T}_{\delta}$ for each $z \in \overline{\mathfrak{U} (z_j)}$. Here Lemma \ref{distancegestimate2} applies directly. Indeed, because $\zeta_i, \zeta_{i + 1} \in \overline{\mathfrak{U} (z_j)}$ and $|\widetilde{u} - \widetilde{v}| = N |\zeta_i - \zeta_{i + 1}| \ge c_1 N$ for some constant $c_1 = c_1 (\mathfrak{R}, \mathfrak{h}) > 0$, we deduce the second statement of \eqref{pguv} from \Cref{distancegestimate2} applied with the $\varepsilon$ there equal to $\delta k^{-1}$.
		
\medskip

{\bf Case 3:} $z_j$ is of type 3. In this situation we again rely on Lemma \ref{distanceg2}. Without loss of generality we assume that $z_j$ is of type 3(a), as the cases when it is of type 3(b) or type 3(c) are very similar. Then, $\nabla \mathfrak{H} (z)$ is of distance at most $3 \delta$ from the segment $\big[ (0, 0), (1, 0) \big] \subset \partial \mathcal{T}$ for any $z \in \overline{\mathfrak{U} (z_j)}$. Hence, since $\zeta_{i + 1} - \zeta_i$ is a multiple of $(0, 1)$, we have $\big| (\zeta_{i + 1} - \zeta_i) \cdot \nabla \mathfrak{H} (z) \big| \le 3 \delta$ for each $z \in \mathfrak{U} (z_j)$. Integrating $z$ from $\zeta_i$ to $\zeta_{i + 1}$ thus gives $\big| \mathfrak{H} (\zeta_i) - \mathfrak{H} (\zeta_{i + 1}) \big| < 3 \delta |\zeta_i - \zeta_{i + 1}|$. Hence, \Cref{Lemma_var_principle} implies for sufficiently large $N$ that
		\begin{flalign*}
			\mathbb{P} \big[ H(\widetilde{v}) - H (\widetilde{u}) < 4 \delta |\widetilde{u} - \widetilde{v}| \big] \ge 1 - \displaystyle\frac{\delta}{2k}; \qquad \mathbb{P} \big[ H' (\widetilde{v}) - H' (\widetilde{u}) \le 4 \delta |\widetilde{u} - \widetilde{v}| \big] \ge 1 - \displaystyle\frac{\delta}{2k}.
		\end{flalign*}
	
		\noindent Recalling the event $\mathcal{F}_1$ from \eqref{f}, it follows from a union bound that $\mathbb{P} \big[ \mathcal{F}_1  (\widetilde{u}, \widetilde{v}; 4 \delta) \big] \ge 1 - \delta k^{-1}$, and so the second statement of \eqref{pguv} follows from \Cref{distanceg2}. This verifies \eqref{pguv} in all cases; as mentioned above, this implies the proposition.
	\end{proof}

	We have now collected all the ingredients for the proof of \Cref{varianceh2}.
	
	\begin{proof}[Proof of \Cref{varianceh2}]
		
		Fix $\delta > 0$ such that $\delta (1 + \diam \mathfrak{R}) = \frac{\varepsilon}{2}$, and fix a vertex $u \in \partial R_N$ closest to $v$. Then, for sufficiently large $N$ we have
		\begin{flalign*}
			\Var \big[ F(v) \big] = \mathbb{E} \big[ \dist_{\mathcal{G}} (u, v) \big] & \le \delta N + \mathbb{P} \big[ \dist_{\mathcal{G}} (u, v) > \delta N \big] \diam R_N \\
			& \le \delta N + \delta \diam R_N \le \delta N (1 + \diam \mathfrak{R}) + \displaystyle\frac{\delta N}{4} < \varepsilon N.
		\end{flalign*}
		
		\noindent Here, the first statement holds by \Cref{uvf} and the deterministic fact that $F(u) = 0$ (as $u \in \partial R_N$); the second by the deterministic bound $\dist_{\mathcal{G}} (u, v) \le |u - v| \le \diam R_N$; the third by \Cref{distanceguv}; the fourth by the fact that $\tfrac{1}{N} \diam R_N \le \diam \mathfrak{R} + \frac{\delta}{4}$ holds for sufficiently large $N$ (since $\lim_{N \rightarrow \infty} \tfrac{1}{N} R_N = \mathfrak{R}$); and the fifth by the definition of $\delta$. This establishes the proposition.	
	\end{proof}

\section{Convergence to the GUE--corners process: proof}

\subsection{Tilings of fixed trapezoids}

While the first ingredient of our proof of Theorem \ref{Theorem_GUE_general} is Proposition \ref{Proposition_varianceh}, the second equally important component is the asymptotic analysis of lozenge tilings of trapezoids from \cite{GP,Novak}. Let us recall the result from these texts which we need. By a \emph{fixed trapezoid} we mean a domain of Figure \ref{Figure_trap}: it is parameterized by the width $L$, the length $A$ of the vertical boundary, and by $L$--tuple of integers $0\le \lambda_1< \lambda_2< \dots< \lambda_L < A+L$, which encodes the positions of the horizontal lozenges along the right boundary of the domain. We refer to these $L$ lozenges as \emph{dents}. The lozenge tilings of a fixed trapezoid are in bijection with triangular arrays $(y_i^k)_{1\le i \le k \le L}$ of $L(L+1)/2$ integers, which satisfy:
\begin{equation}
\label{eq_interlacing}
 y_i^{k+1} \le y_{i}^{k}< y_{i+1}^{k+1}, \quad 1\le i \le k <L,\qquad \text{ and }\quad  (y_1^L,\dots,y_L^L)=(\lambda_1,\dots,\lambda_L).
\end{equation}
The integers of the array encode positions of horizontal lozenges, as shown in Figure \ref{Figure_trap}. The role of the parameter $A$ for the array is very limited: it only appears in the inequality $\lambda_L<A+L$. However, this parameter becomes more important when we want to reconstruct a tiling: the same array can lead to slightly different tilings depending on the value of $A$.

\begin{figure}[t]
\center
  \includegraphics[width=0.8\textwidth]{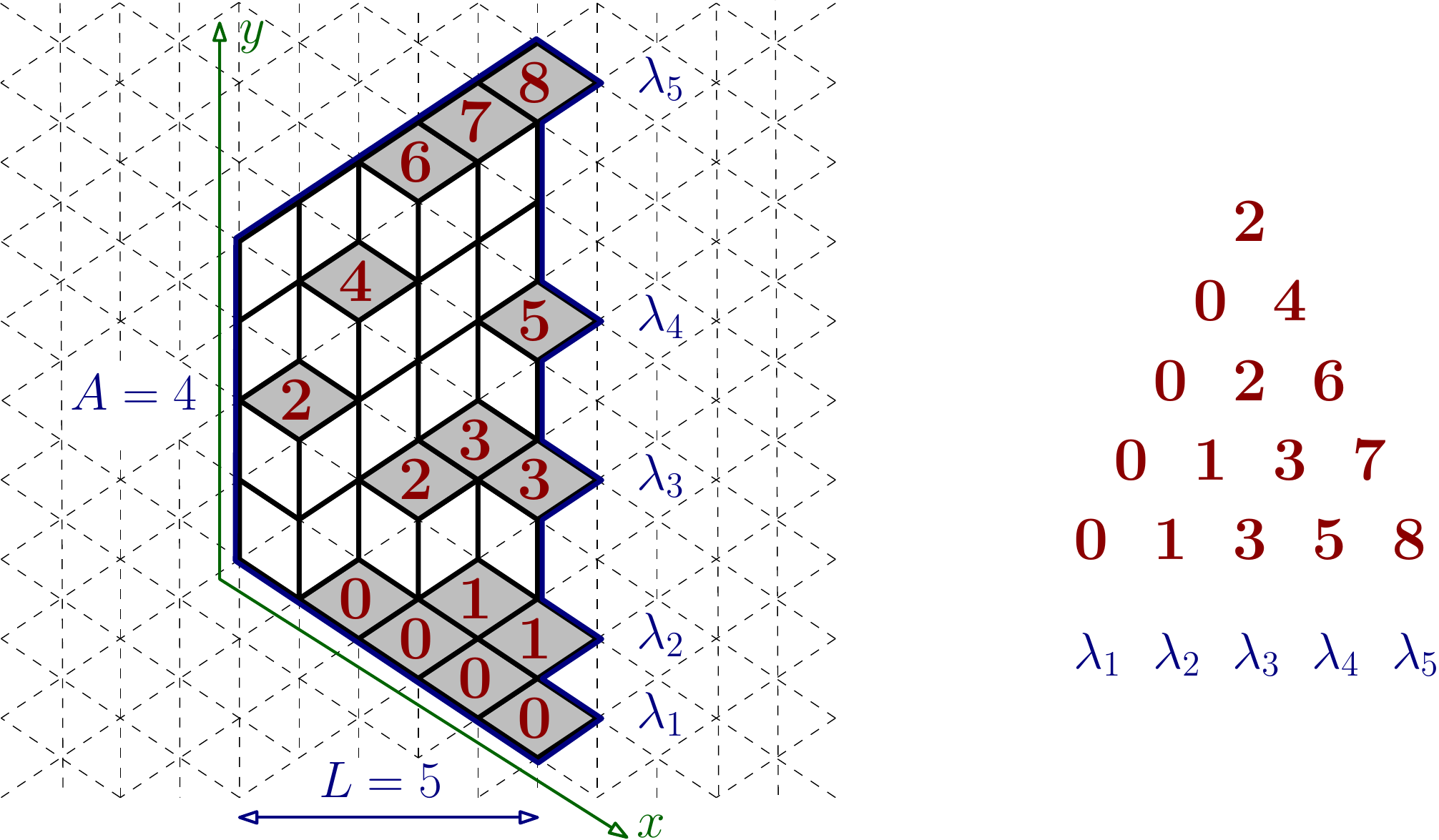}
\caption{A tiling of a fixed trapezoid and corresponding triangular array of coordinates of horizontal lozenges}
\label{Figure_trap}
\end{figure}

We are interested in the behavior of uniformly random lozenge tilings of a fixed trapezoid with dents $\lambda_1,\dots,\lambda_L$ as $L$ and $A$ tend to infinity. In this situation $(y_i^k)_{1\le i \le k \le L}$ become random variables and we would like to understand their asymptotics. For finite $i$ and $k$ the limiting behavior is summarized in the following lemma, which is a slight reformulation of results from \cite{GP} and \cite{Novak}.

\begin{lemma}[{\cite[Theorem 1.7]{GP},\cite[Theorem 1]{Novak}}] \label{Lemma_GUE_trap}
  Suppose that $\lambda=(\lambda_1,\dots,\lambda_L)$ and $A$ depend on $L$. Then, in the sense of convergence in finite-dimensional distributions,  we have
 \begin{equation}
 \label{eq_conv_GUE_trap}
  \lim_{L\to\infty} \frac{y_i^k-m(\lambda)}{\sigma(\lambda)\sqrt{L}}= \xi_i^k, \qquad \text{jointly over }1\le i \le k,
 \end{equation}
 where $\xi_i^k$ is the GUE--corners process of Definition \ref{Def_GUE_corners},
 \begin{equation}
 \label{eq_params_trap}
  m(\lambda)=\left[\sum_{i=1}^L \frac{\lambda_i}{L}\right]- \frac{L}{2},\qquad \sigma(\lambda)^2= \frac{1}{L}\sum_{i=1}^L \left(\frac{\lambda_i}{L}\right)^2 - \left( \frac{1}{L} \sum_{i=1}^L \frac{\lambda_i}{L}\right)^2 - \frac{1}{12},
 \end{equation}
 and the convergence is uniform over $A$ such that $A/L$ stays bounded as $L\to\infty$ and $\lambda$ such that $\sigma(\lambda)$ stays bounded away from $0$ as $L\to\infty$. If instead $\sigma(\lambda)$ tends to $0$ as $L\to\infty$, then \eqref{eq_conv_GUE_trap} should be interpreted as convergence of $\frac{y_i^k-m(\lambda)}{\sqrt{L}}$ to the zero vector.
\end{lemma}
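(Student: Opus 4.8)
The plan is to reduce the statement to the asymptotic results of \cite{GP,Novak} on uniformly random lozenge tilings of trapezoids whose rescaled dents have a weak limit, and then to (i) rephrase those results in terms of the finite‑$L$ empirical moments $m(\lambda),\sigma(\lambda)$ of \eqref{eq_params_trap}, (ii) upgrade them to the asserted uniformity in $A$ and $\lambda$ by a compactness argument, and (iii) dispose of the degenerate regime $\sigma(\lambda)\to 0$ by a direct second moment estimate.

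I would first record the combinatorial structure. For fixed $\lambda$ the parameter $A$ enters \eqref{eq_interlacing} only through the inequality $\lambda_L<A+L$ and otherwise does not affect the law of $(y_i^k)$, so uniformity in $A$ is automatic; the sole consequence of the hypothesis ``$A/L$ bounded'' is that the rescaled dents $\lambda_i/L$ are confined to a fixed bounded interval $[0,\Lambda]$. The uniform measure on arrays obeying \eqref{eq_interlacing} is a Gelfand--Tsetlin ensemble with fixed top row; writing $s_\nu$ for Schur polynomials and $1^m$ for the $m$‑tuple of ones, its $k$‑th row has marginal $\mathbb{P}(y^k=\nu)=s_\nu(1^k)\,s_{\lambda/\nu}(1^{L-k})/s_\lambda(1^L)$, hence Schur generating function $s_\lambda(x_1,\dots,x_k,1^{L-k})/s_\lambda(1^L)$ by the branching identity $\sum_\nu s_{\lambda/\nu}(y)s_\nu(x)=s_\lambda(x,y)$, and the whole array is a Markov chain from the top row downward with transition kernels $s_\nu(1^k)/s_\mu(1^{k+1})$ for $\nu\preceq\mu$. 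This is precisely the input for the normalized Schur function asymptotics of \cite{GP} and for the transfer operator analysis of \cite{Novak}.

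Next I would invoke \cite[Theorem 1.7]{GP} and \cite[Theorem 1]{Novak}: whenever $\sigma(\lambda(L))$ converges to a positive limit (in particular whenever $\tfrac1L\sum_{i=1}^L\delta_{\lambda_i/L}$ converges weakly to a compactly supported measure with second moment exceeding the squared first moment by more than $\tfrac1{12}$), the rescaled array $\bigl(y_i^k-m(\lambda)\bigr)/\bigl(\sigma(\lambda)\sqrt L\bigr)$ converges, jointly over $1\le i\le k$, to the GUE--corners process of Definition \ref{Def_GUE_corners}. The finite‑$L$ quantities $m(\lambda),\sigma(\lambda)$ of \eqref{eq_params_trap} appear because the leading asymptotics of $\log\bigl(s_\lambda(e^{u_1/\sqrt L},\dots,e^{u_k/\sqrt L},1^{L-k})/s_\lambda(1^L)\bigr)$ are governed by the empirical power sums $\tfrac1L\sum(\lambda_i/L)^j$; the corrections $-L/2$ and $-\tfrac1{12}$ are the normalizations making the densest‑packing configuration $\lambda_i=i-1$, for which the tiling is deterministic, satisfy $m(\lambda)=O(1)$ and $\sigma(\lambda)=0$, so that only the excess spread of the dents beyond consecutive integers feeds the fluctuations. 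Joint convergence of several rows follows since the limits of the kernels $s_\nu(1^k)/s_\mu(1^{k+1})$ are the continuum kernels linking consecutive corners of the GUE. To obtain the stated uniformity over admissible pairs $(A,\lambda)$ with $\sigma(\lambda)\ge c>0$, I would argue by contradiction: a failure would produce $L_j\to\infty$, admissible $\lambda^{(j)}$ of length $L_j$ with $\sigma(\lambda^{(j)})\ge c$, a fixed finite index set, and a bounded Lipschitz $f$ with $\bigl|\mathbb{E} f\bigl((y_i^k-m(\lambda^{(j)}))/(\sigma(\lambda^{(j)})\sqrt{L_j})\bigr)-\mathbb{E} f\bigl((\xi_i^k)\bigr)\bigr|\ge\eps_0$ for all $j$; since $\lambda_i^{(j)}/L_j\in[0,\Lambda]$ we have $\sigma(\lambda^{(j)})\in[c,\Lambda]$ and the empirical measures lie in a weakly compact set, so along a subsequence $\sigma(\lambda^{(j)})$ and the empirical measures converge, the limiting $\sigma$ being $\ge c>0$; the previous paragraph then forces convergence to the GUE--corners along that subsequence, contradicting $\eps_0$.

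Finally, for the degenerate regime I would differentiate the Schur generating function $s_\lambda(x_1,\dots,x_k,1^{L-k})/s_\lambda(1^L)$ at $(1,\dots,1)$ and feed its first two derivatives into the same asymptotic analysis to obtain, uniformly over admissible $\lambda$, $\mathbb{E}\,y_i^k=m(\lambda)+O(1)$ and $\Var\,y_i^k=O\bigl(\sigma(\lambda)^2 L\bigr)+O(1)$; hence when $\sigma(\lambda)\to 0$ one has $\mathbb{E}\bigl[(y_i^k-m(\lambda))^2\bigr]=o(L)$, so $(y_i^k-m(\lambda))/\sqrt L\to 0$ in $L^2$, therefore in probability and jointly over $1\le i\le k$, which is the claimed interpretation of \eqref{eq_conv_GUE_trap}. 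The main obstacle is the uniformity: one needs the normalized Schur function (equivalently transfer operator) asymptotics underlying \cite{GP,Novak} to hold uniformly over the compact family of dent profiles supported in $[0,\Lambda]$ with $\sigma(\lambda)$ bounded below, which requires checking that the error terms in those analyses depend only on the support bound and on a lower bound for $\sigma(\lambda)$. This is essentially already present in those methods, and the compactness argument above packages it without reproving the analysis; everything else is bookkeeping around the explicit affine change of variables.
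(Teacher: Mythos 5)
The paper gives no proof of this lemma; it is stated as a reformulation of \cite[Theorem 1.7]{GP} and \cite[Theorem 1]{Novak}, so there is no internal argument to compare against directly. Your reconstruction of how the lemma follows from those sources is broadly sound: the Gelfand--Tsetlin/Schur-generating-function setup is exactly the machinery of \cite{GP}; the observation that the law of the array depends on $A$ only through $\lambda_L<A+L$ (so uniformity in $A$ is automatic, while ``$A/L$ bounded'' serves only to confine $\lambda_i/L$ to a compact interval) is both correct and matches the remark the paper itself makes when introducing fixed trapezoids; and the compactness-plus-contradiction argument is a clean and valid way to package the asserted uniformity in $\lambda$ without reproving the Schur asymptotics, relying only on the fact that the empirical measures of $\lambda_i/L$ live in a weakly compact family and that a limit with $\sigma\ge c>0$ cannot be the uniform measure on a unit interval. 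Two points deserve more care. First, your contradiction argument implicitly needs that the finite-$L$ centering $m(\lambda^{(j)})$ and scale $\sigma(\lambda^{(j)})$ used in \eqref{eq_conv_GUE_trap} are asymptotically compatible with whatever centering/scaling \cite{GP,Novak} use after passing to a weakly convergent subsequence; this requires matching first and second moments to precision $o(\sqrt{L})$, and while it is true here (weak convergence with a uniform support bound gives moment convergence, and the centerings in the cited works are indeed built from the same empirical moments), it is worth flagging rather than leaving implicit. Second, and more substantively, the degenerate case $\sigma(\lambda)\to 0$ rests on the asserted uniform estimates $\E y_i^k=m(\lambda)+O(1)$ and $\Var y_i^k=O\bigl(\sigma(\lambda)^2L\bigr)+O(1)$; these are plausible consequences of expanding the normalized Schur generating function to second order, but they are exactly the kind of quantitative, uniform-in-$\lambda$ bounds that are \emph{not} literally stated in \cite{GP,Novak} and would need to be extracted from their proofs with some effort, so this step is the genuine gap in your sketch as written. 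Everything else is bookkeeping and is consistent with the paper's (unstated) route.
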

\begin{remark}
 The minimal value that $\sigma(\lambda)^2$ can take is achieved when $(\lambda_1,\dots,\lambda_L)=(B,B+1,\dots,B+L-1)$. In this situation the lozenge tiling is completely frozen (i.e.\ there exists only one array satisfying \eqref{eq_interlacing}) and
 $$
  \sigma(\lambda)^2= \frac{L(L-1)(2L-1)}{6L^3}- \left( \frac{L(L-1)}{2L^2}\right)^2-\frac{1}{12}= \frac{L-1}{L}\left( \frac{2L-1}{6L}- \frac{L-1}{4L}\right)-\frac{1}{12}\stackrel{L\to\infty}{\longrightarrow}0.
 $$
 On the other hand, if we assume that the empirical measures $\frac{1}{L}\sum_{i=1}^L \delta_{\lambda_i/L}$ converge as $L\to\infty$ to a probability measure\footnote{$\lambda_{i+1}-\lambda_i\ge 1$ implies that the Lebesgue density of the limiting measure is at most one.} other than a uniform measure on an interval of length one, then $\sigma^{2}$ converges to a positive constant.
\end{remark}

\subsection{Proof of Theorem \ref{Theorem_GUE_general}}

In Lemma \ref{Lemma_GUE_trap} we dealt with fixed trapezoids, which means that the $L$--tuples of dents $(\lambda_1,\dots,\lambda_L)$ were deterministic. Let us now introduce a notion of \emph{fluctuating trapezoid}: this is a trapezoid in which the dents are allowed to be random. When we discuss tilings of a fluctuating trapezoid, we assume that the law of the dents can be arbitrary, but conditionally on the positions of the dents, the probability distribution on lozenge tilings inside the trapezoid is uniform.

For each $N=1,2,\dots$, let us look at the embedded trapezoid of Theorem \ref{Theorem_GUE_general}. With probability tending to $1$ as $N\to\infty$, each tiling of $R_N$ gives a rise to a lozenge tilings of the (embedded) fluctuating trapezoid. Let us denote the width of this trapezoid through $L=L(N)$ and its dents through $\lambda_i=\lambda_i(N)$, $1\le i \le L$. We would like to set $m_N=\E m(\lambda)$ and $\sigma^2=\lim_{N\to\infty} \sigma(\lambda)^2$, as given by \eqref{eq_params_trap}, and apply Lemma \ref{Lemma_GUE_trap} to get \eqref{eq_gen_to_GUE} (note that $A/L$ in Lemma \ref{Lemma_GUE_trap} is uniformly bounded, since all the domains involved in Theorem \ref{Theorem_GUE_general} and the ratio $N/L$ are bounded). The caveat is that \eqref{eq_conv_GUE_trap} involves \emph{random} $m(\lambda)$ and $\sigma(\lambda)$, rather than deterministic $m_N$ and $\sigma$ of \eqref{eq_gen_to_GUE}. Hence, we need to prove the following two claims:

\smallskip

\noindent{\bf Claim I:} \,\,\, $\displaystyle\lim_{N\to\infty}\frac{m(\lambda)-\E m(\lambda)}{\sqrt{N}}=0$, in probability.

\noindent {\bf Claim II:} \, There exists a deterministic limit in probability $\displaystyle \lim_{N\to\infty}  \sigma(\lambda)^2$.

\smallskip

For Claim I we need to rewrite $m(\lambda)$ in terms of the height function. Let $H_L(y)$ denote the height function of the tiling along the vertical line passing through the dents of the embedded trapezoid. In more detail, following the notation of Section \ref{Section_height_function}, we choose the origin of the coordinate system to be in the bottom--right corner of the embedded trapezoid and set $H_L(y):=H(0,y)$.
 As the local rules of Figure \ref{Figure_height_function} show, this function starts as a constant $C$ at the bottom of the trapezoid and as we move up it grows by $1$ whenever we cross a horizontal lozenge and stays constant otherwise.
Hence, we can write
\begin{equation}
\label{eq_sum_as_H}
\sum_{i=1}^L \lambda_i = \sum_{y=0}^{A+L-1} y \bigl(H_L(y+1)-H_L(y)\bigr)=(A+L-1) H_L(A+L)-\sum_{y=1}^{A+L-1} H_L(y).
\end{equation}
Because $H_L(A+L)$ is deterministic, we conclude that
\begin{equation}
\label{eq_m_transform}
 m(\lambda)-\E m(\lambda)=\frac{1}{L}\left(\sum_{i=1}^L \lambda_i-\E\sum_{i=1}^L \lambda_i\right)=-\frac{1}{L}\sum_{y=1}^{A+L-1}\bigl(H_L(y)-\E H_L(y)\bigr)
\end{equation}
Using a bound on the variance of a sum of arbitrary random variables
$$
 \Var(\alpha_1+\alpha_2+\dots+\alpha_K)\le K \bigl( \Var \alpha_1+\Var \alpha_2+\dots +\Var \alpha_K),
$$
 the result of Proposition \ref{Proposition_varianceh}, and \eqref{eq_m_transform}, we get as $N\to\infty$
$$
 \E \bigl(m(\lambda)-\E m(\lambda)\bigr)^2\le \frac{A+L-1}{L^2} \sum_{y=1}^{A+L-1} \Var H_L(y)\le \frac{(A+L-1)^2}{L^2} o(N).
$$
Because both $L$ and $A$ linearly depend on $N$, the last bound and the Markov's inequality imply {Claim~I}.

For Claim II we notice that \eqref{eq_sum_as_H} together with Lemma \ref{Lemma_var_principle} implies that $\sum_{i=1}^{L} \lambda_i$ can be approximated using the limit shape given by the variational principle. Recalling that we have (without loss of generality) assumed that the origin of the coordinate system is at the bottom-right corner of the embedded trapezoid, we have\footnote{In \eqref{eq_sum_limit} we silently assume that entire segment $(0,0)-(0, \tfrac{A+L}{N})$ lies in $\mathfrak R$. This segment has to be inside $R_N$ by definition, but $\mathfrak R$ might be slightly different from $R_N$. However, this difference only introduces another $o(1)$ error, which we can ignore.}
\begin{equation}
\label{eq_sum_limit}
 \frac{1}{L}\sum_{i=1}^{L}\frac{\lambda_i}{L}=\frac{N(A+L-1)}{L^2}  \mathfrak H\left(0, \tfrac{A+L}{N}\right)-\frac{N}{L^2}\sum_{y=1}^{A+L-1} \mathfrak H\left(0, \tfrac{i}{N}\right)+o(1),
\end{equation}
where $\mathfrak H$ is the limit shape and $o(1)$ is random asymptotically vanishing term as $N\to\infty$. The first term in the right-hand side of \eqref{eq_sum_limit} has a straightforward $N\to\infty$ limit, while the second one is a Riemann sum, approximating the integral of $\mathfrak H$ along the right boundary of the embedded trapezoid as $N\to\infty$. We conclude that \eqref{eq_sum_limit} converges to a deterministic constant as $N\to\infty$. A very similar computation shows that
$$
 \frac{1}{L}\sum_{i=1}^L \left(\frac{\lambda_i}{L}\right)^2
$$
also has a deterministic limit as $N\to\infty$, which can be directly expressed through the limit shape $\mathfrak H$. We conclude that all terms in the definition of $\sigma(\lambda)^2$ converge as $N\to\infty$, which is the desired statement of Claim II.

The proof of Theorem \ref{Theorem_GUE_general} is finished.

\appendix

\section{Uniformity in Convergence of Local Statistics}

\label{ProofConverge}

In this section we explain how \Cref{convergemu} is a consequence of results from \cite{ULS}. In what follows, we let $\mathcal{B}_R (z) = \big\{ z' \in \mathbb{R}^2 : |z' - z| = R \big\}$ denote the disk of radius $R$ centered at $z \in \mathbb{R}^2$.

\begin{proof}[Proof of \Cref{convergemu}]
	
	As mentioned in \Cref{convergev}, \Cref{convergemu} essentially coincides with Theorem 1.5 of \cite{ULS}, except that the uniformity of the constant $C>1$ in the vertex $v \in R_N$ was not explicitly claimed there. On the other hand, it was stated in \cite{ULS} that such uniformity holds if $R_N$ is a disk containing $v$. More specifically, suppose that there exists some vertex $u \in \mathbb{T}$ such that the following holds.
	\begin{enumerate}
		\item $R_N$ approximates a disk $\mathcal{B}_N (u)$ centered at $u$. That is, we have $R_N = \mathcal{B}_N (u) \cap \mathbb{T}$.
		\item  We have $\mathcal{B}_{\varepsilon N} (v) \subset R_N$ (that is, $R_N$ contains a disk centered at $v$).
		\item For each $z \in \mathcal{B}_{\varepsilon} (\tfrac{1}{N} v)$, we have $\nabla \mathcal{H} (z) \in \mathcal{T}_{\varepsilon}$ (that is, $\nabla \mathcal{H}$ is uniformly liquid on that disk).
	\end{enumerate}

	\noindent  Then, Theorem 3.15 (see also Assumption 3.5) of \cite{ULS} states that \Cref{convergemu} holds, with $C$ uniform in $v$. Let us mention that Theorem 3.15 of \cite{ULS} does not require $R_N$ to be tileable. If it is not, then $H_N$ is a uniformly random height function on $R_N$ with (some) boundary height function $h_N$ defined on $\partial R_N$, and $\mathscr{M}$ is the unique \emph{free tiling} associated with $H_N$, in which tiles are permitted to extend beyond the boundary of $R_N$.
	
	Although in \Cref{convergemu} the domain $R_N$ does not have to approximate a disk, we may apply the above result on a subdisk of it containing $v$ in the following way. Since $\nabla \mathcal{H} (\mathfrak{v}) \in \mathcal{T}_{\varepsilon}$,  \cite[Proposition 4.1]{MCFRS} implies that $\nabla \mathcal{H}$ is continuous in a neighborhood of $\mathfrak{v} = \tfrac{1}{N} v \in \mathfrak{R}$. Letting $\kappa = \kappa (\varepsilon) > 0$ denote a sufficiently small real number to be fixed later, there then exists $\rho = \rho (\varepsilon, \kappa, \mathfrak{R}) \in (0, \varepsilon)$ such that $B_{\rho} (\mathfrak{v}) \subset \mathfrak{R} \cap \tfrac{1}{N} R_N$ and
	\begin{flalign}
		\label{hzkappa}
		\big| \nabla \mathcal{H} (z) - \nabla \mathcal{H} (\mathfrak{v}) \big| < \kappa, \qquad \text{for each $z \in \mathcal{B}_{\rho} (\mathfrak{v})$}.
	\end{flalign}

	\noindent Let $\widetilde{R}_N = \mathcal{B}_{\rho N} (v) \cap \mathbb{T}$ and $\widetilde{\mathfrak{R}}_N = \tfrac{1}{N} \widetilde{R}_N$, and condition on the restriction $\mathscr{M}_N |_{R_N \setminus \tilde{R}_N}$ of the random tiling $\mathscr{M}_N$ to $R_N \setminus \widetilde{R}_N$. This induces (random) boundary data $\widetilde{\mathfrak{h}}_N: \partial \widetilde{\mathfrak{R}}_N \rightarrow \mathbb{R}$ defined by setting $\widetilde{\mathfrak{h}}_N (z) = \tfrac{1}{N} H_N (N z)$ for each $z \in \partial \widetilde{\mathfrak{R}}_N$. Let $\widetilde{\mathcal{H}} \in \Adm (\widetilde{\mathfrak{R}}_N; \widetilde{\mathfrak{h}}_N)$ denote the maximizer of $\mathcal{E}$ on $\widetilde{\mathfrak{R}}_N$ with boundary data $\widetilde{\mathfrak{h}}_N$. In order to apply Theorem 3.15 of \cite{ULS} on the domain $\widetilde{\mathfrak{R}}_N$ with boundary data $\widetilde{\mathfrak{h}}_N$, we must verify that $\nabla \widetilde{\mathcal{H}}$ is likely uniformly liquid around $\mathfrak{v}$ (for example, $\nabla \widetilde{\mathcal{H}} (z) \in \mathcal{T}_{\varepsilon / 2}$ for $z \in \mathcal{B}_{\rho  / 4} (\mathfrak{v})$ with high probability).
	
	To that end, observe for any fixed $\delta = \delta (\varepsilon, \omega, D) > 0$ that the variational principle, \Cref{Lemma_var_principle}, yields a constant $C_0 = C_0 (\delta, \mathfrak{R}, \mathfrak{h}) > 1$ such that for $N > C_0$ we have
	\begin{flalign}
		\label{e}
		\mathbb{P} [\mathcal{E}_N] > 1 - \delta, \qquad \text{where} \qquad  \mathcal{E}_N = \mathcal{E}_N (\delta) = \bigg\{ \displaystyle\sup_{z \in \partial \widetilde{\mathfrak{R}}_N} \big| \widetilde{\mathfrak{h}}_N (z) - \mathcal{H} (z) \big| < \delta \bigg\}.
	\end{flalign}

	\noindent Next, for sufficiently small $\kappa = \kappa (\varepsilon) > 0$, \eqref{hzkappa} and Proposition 2.13 of \cite{ULS} together yield a constant $C_1 = C_1 (\varepsilon) > 1$ such that
	\begin{flalign}
		\label{deltae}
		\textbf{1}_{\mathcal{E}} \displaystyle\sup_{z \in \mathcal{B}_{\rho / 4} (\mathfrak{v})} \big| \nabla \widetilde{\mathcal{H}} (z) - \nabla \mathcal{H} (z) \big| < C_1 \textbf{1}_{\mathcal{E}} \displaystyle\sup_{z \in \partial \widetilde{\mathfrak{R}}_N} \big| \widetilde{\mathfrak{h}}_N (z) - \mathcal{H} (z) \big| < C_1 \delta.
	\end{flalign}
	
	\noindent Hence, if $\delta = \delta (\varepsilon, \omega, D) > 0$ and $\kappa = \kappa (\varepsilon)$ are chosen sufficiently small so that $C_1 \delta < \frac{\varepsilon}{4}$ and $\kappa < \frac{\varepsilon}{4}$, then it follows from \eqref{hzkappa} and \eqref{deltae} that $\nabla \widetilde{\mathcal{H}} (z) \in \mathcal{T}_{\varepsilon / 2}$ for each $z \in \mathcal{B}_{\rho / 4} (\mathfrak{v})$ (since $\nabla \mathcal{H} (\mathfrak{v}) \in \mathcal{T}_{\varepsilon}$).
	
	In particular, Theorem 3.15 of \cite{ULS} applies. Denoting $(\widetilde{s}, \widetilde{t}) = \nabla \widetilde{\mathcal{H}} (\mathfrak{v})$, it yields a constant $C_2 = C_2 (\varepsilon, \omega, D, \mathfrak{R}, \mathfrak{h}) > 1$ such that for $N > C_2$ we have
	\begin{flalign}
		\label{e1}
		\textbf{1}_{\mathcal{E}} d_{\TV} \big(\mathscr{M} |_{\mathcal{B}_D (v)}, \mu_{\tilde{s}, \tilde{t}} |_{\mathcal{B}_D (v)} \big) < \frac{\omega}{2}.
	\end{flalign}	
	
	\noindent By \eqref{deltae}, we have for sufficiently small $\delta = \delta (\varepsilon, \omega, D) > 0$ that
	\begin{flalign}
		\label{e2}
		\textbf{1}_{\mathcal{E}} d_{\TV} (\mu_{\tilde{s}, \tilde{t}} |_{\mathcal{B}_D (v)}, \mu_{s, t} |_{\mathcal{B}_D (v)}) < \frac{\omega}{4}.
	\end{flalign}
	
	\noindent Thus, by \eqref{e}, \eqref{e1}, \eqref{e2}, and further imposing that $\delta < \frac{\omega}{4}$, we have for $N > \max \{ C_0, C_2 \}$ that
	\begin{flalign*}
		d_{\TV} (\mathscr{M} |_{\mathcal{B}_D (v)}, \mu_{s, t} |_{\mathcal{B}_D (v)}) & \le \textbf{1}_{\mathcal{E}} d_{\TV} (\mathscr{M} |_{\mathcal{B}_D (v)}, \mu_{s, t} |_{\mathcal{B}_D (v)}) + \mathbb{P} [\mathcal{E}^c] \\
		& < \textbf{1}_{\mathcal{E}} d_{\TV} (\mathscr{M} |_{\mathcal{B}_D (v)}, \mu_{\tilde{s}, \tilde{t}} |_{\mathcal{B}_D (v)}) + d_{\TV} (\mu_{\tilde{s}, \tilde{t}} |_{\mathcal{B}_D (v)}, \mu_{s, t} |_{\mathcal{B}_D (v)}) + \frac{\omega}{4} < \omega,
	\end{flalign*}

	\noindent which implies the lemma.
\end{proof}


\end{document}